\definecolor{lightgray}{gray}{0.85}
\definecolor{midgray}{gray}{0.4}
\numberwithin{equation}{section}
\theoremstyle{remark} 
\newtheorem*{rem}{Remark} 
\theoremstyle{plain} 
\newtheorem{thm}{Theorem}[section] 
\newtheorem{cor}[thm]{Corollary}
\newtheorem{prop}[thm]{Proposition}
\newtheorem{lem}[thm]{Lemma}
\theoremstyle{definition} 
\newtheorem{deff}[thm]{Definition}
\newtheorem{hyp}[thm]{Hypothesis}
\newtheorem{remark}[thm]{Remark}
\title{Long-term stability of interacting Hawkes processes on random graphs}
\author{Zo\'e \textsc{Agathe-Nerine}}
\address{Universit\'e Paris Cit\'e, Laboratoire MAP5 (UMR CNRS 8145), 75270 Paris, France \& FP2M, CNRS FR 2036, \tt{zoe.agathe-nerine@u-paris.fr}.
}
\date{\today}
\begin{document}
\maketitle

\begin{abstract}
We consider a population of Hawkes processes modeling the activity of $N$ interacting neurons. The neurons are regularly positioned on the segment $[0,1]$, and the connectivity between neurons is given by a random possibly diluted and inhomogeneous graph where the probability of presence of each edge depends on the spatial position of its vertices through a spatial kernel. The main result of the paper concerns the long-time stability of the
synaptic current of the population, as $N\to\infty$, in the subcritical regime in case the synaptic memory kernel is exponential, up to time horizons that are polynomial in $N$.
\end{abstract}

\noindent {\sc {\bf Keywords.}} Multivariate nonlinear Hawkes processes, Mean-field systems,  Neural Field Equation, Spatially extended system, $W$-Random graph.\\
\noindent {\sc {\bf AMS Classification.}} 60F15, 60G55, 44A35, 92B20.



\section{Introduction}

\subsection{Hawkes processes in neuroscience}

In the present paper we study the large time behavior of a population of interacting and spiking neurons, as the size of the population $N$ tends to infinity.   We model the activity of a neuron by a point process where each point represents the time of a spike: $Z_{N,i}(t)$ counts the number of spikes during the time interval $[0,t]$ of the $i$th neuron of the population. Its intensity at time $t$ conditioned on the past $[0, t)$ is given by $\lambda_{N,i}(t)$, in the sense that 
$$\mathbf{P}\left( Z_{N,i} \text{ jumps between} (t,t+dt) \vert \mathcal{F}_t\right)= \lambda_{N,i}(t)dt,$$
where $\mathcal{F}_t:=\sigma\left( Z_{N,i}(s), s\leq t, 1\leq i\leq N\right)$.

For the choice of $\lambda_{N,i}$, we want to account for the dependence of the activity of a neuron on the past of the whole population : the spike of one neuron can trigger others spikes. \textit{Hawkes processes} are then a natural choice to emphasize this interdependency. A generic choice is
\begin{equation}\label{eq:def_lambda_generic}
\lambda_{N,i}(t)=\mu(t,x_i)+f\left( v(t,x_i)+\dfrac{1}{N}\sum_{j=1}^N w_{ij}^{(N)} \int_0^{t-} h(t-s) dZ_{N,j}(s)\right).
\end{equation}
Here, with the $i$th neuron at position $x_i=\frac{i}{N}\in I:=[0,1]$, $f ~:~  \mathbb{R}  \longrightarrow \mathbb{R}_+$ represents the synaptic integration, $\mu(t,\cdot)~:~ I \longrightarrow \mathbb{R}_+$ a spontaneous activity of the neuron at time $t$, $v(t,\cdot)~:~ I \longrightarrow \mathbb{R}$ a past activity and  $h~:~  \mathbb{R}_+ \longrightarrow \mathbb{R}$ a memory function which models how a past jump of the system affects the present intensity. The term $w_{ij}^{(N)}$ represents the random inhomogeneous interaction between neurons $i$ and $j$, that will be modeled here in terms of the realization of a random graph.

Since the seminal works of \cite{HAWKES1971, Hawkes1974}, there has been a renewed interest in the use of Hawkes processes, especially in neuroscience. A common simplified framework is to consider an interaction on the complete graph, that is taking $w_{ij}^{(N)}=1$ in \eqref{eq:def_lambda_generic}, as done in \cite{delattre2016}. In this case, a very simple instance of \eqref{eq:def_lambda_generic} concerns the so called \emph{linear case}, when $f(x)=x$,$\mu(t,x)=\mu$ and $v=0$, that is $\lambda_{N,i}(t)=\lambda_N(t)=\mu+\frac{1}{N}\sum_{j=1}^N \int_0^{t-} h(t-s) dZ_{N,j}(s)$, with $h\geq 0$ (see \cite{delattre2016}). The biological evidence \cite{Bosking1997,Mountcastle1997} of a spatial organisation of neurons in the brain has led to more elaborate Hawkes models with spatial interaction (see \cite{Touboul2014,Ditlevsen2017,CHEVALLIER20191}), possibly including inhibition (see \cite{Pfaffelhuber2022}). This would correspond in \eqref{eq:def_lambda_generic} to take $w_{ij}^{(N)}=W(x_i,x_j)$, where $W$ is a macroscopic interaction kernel, usual examples being the exponential distribution on $\mathbb{R}$,  $W(x,y)=\dfrac{1}{2\sigma}\exp\left( -\dfrac{\vert x-y\vert}{\sigma}\right)$ or the ``Mexican hat'' distribution
$W(x,y)=e^{-\vert x-y\vert} - Ae^{\frac{-\vert x-y\vert}{\sigma}}$, $A\in \mathbb{R},~\sigma>0$. The macroscopic limit of the multivariate Hawkes process \eqref{eq:def_lambda_generic} is then given by a family of spatially extended inhomogeneous Poisson processes whose intensities $(\lambda_t(x))_{x\in I}$ solve the convolution equation 
\begin{equation}\label{eq:def_lambda_lim_generic}
\lambda_t(x)=\mu_t(x)+f\left( v_t(x)+\int_I W(x,y) \int_0^{t} h(t-s) \lambda_s(y)dsdy\right).
\end{equation}
A crucial example is the exponential case, that is when $h(t)=e^{-\alpha t}$ for some $\alpha>0$. In this case, the Hawkes process with intensity \eqref{eq:def_lambda_generic} is Markovian (see \cite{Ditlevsen2017}). Denoting in  \eqref{eq:def_lambda_lim_generic} $u_t(x):=v_t(x)+\int_I W(x,y) \int_0^{t} h(t-s) \lambda_s(y)dsdy$ as the potential of a neuron (the synaptic current) localised in $x$ at time $t$ (so that \eqref{eq:def_lambda_lim_generic} becomes $\lambda_t(x)=f(u_t(x))$), an easy computation (see \cite{CHEVALLIER20191}) gives that, when $v_t(x)=e^{-\alpha t}v_0(x)$ for some $v_0$, $u$ solves the \emph{Neural Field Equation} (NFE) 
\begin{equation}\label{eq:NFE}
\dfrac{\partial u_t(x)}{\partial t}=-\alpha u_t(x)+\int_I W(x,y)f(u_t(y))dy+ I_t(x),
\end{equation}
with source term $I_t(x):=\int_I W(x,y)\mu_t(y)dy$. Equation \eqref{eq:NFE} has been extensively studied in the literature, mostly from a phenomenological perspective \cite{Wilson1972,Amari1977}, and is an important example of macroscopic neural dynamics with non-local interactions (we refer to \cite{Bressloff2011} for an extensive review on the subject).

In a previous work \cite{agathenerine2021multivariate}, we give a microscopic interpretation of the macroscopic kernel $W$ in terms of an inhomogeneous graph of interaction. We consider $w_{ij}^{(N)}=\xi_{ij}^{(N)} \kappa_i$ in \eqref{eq:def_lambda_generic}, where $\left(\xi_{ij}^{(N)}\right)_{1\leq i,j\leq N}$ is a collection of independent Bernoulli variables, with individual parameter $W(x_i,x_j)$: the probability that two neurons are connected depends on their spatial positions. The term $\kappa_i$ is a suitable local renormalisation parameter, to ensure that the interaction remains of order $1$. This modeling constitutes a further difficulty in the analysis as we are no longer in a mean-field framework: contrary to the case $w_{ij}^{(N)}=1$, the interaction \eqref{eq:def_lambda_generic} is no longer a functional of the empirical measure of the particles $\left(Z_{N,1},\cdots, Z_{N,N}\right)$. A recent interest has been shown to similar issues in the case of diffusions interacting on random graphs (first in the homogeneous Erd\H{o}s-R\'enyi case \cite{DelattreGL2016,Coppini2019,Coppini_Lucon_Poquet2022,Coppini2022}, and secondly for inhomogenous random graph \cite{Luon2020,bayraktar2021graphon,bet2020weakly}). 

A  common motivation between \cite{agathenerine2021multivariate} in the case of Hawkes processes and \cite{Luon2020,bayraktar2021graphon,bet2020weakly} in the case of diffusions is to understand how the inhomogeneity of the underlying graph may or may not influence the long time dynamics of the system. An issue common to all mean-field models (and their perturbations) is that there is, in general, no possibility to interchange the limits $N\to \infty	$ and $t\to\infty$. More precisely, restricting to Hawkes processes, a usual propagation of chaos result (see \cite[Theorem 8]{delattre2016}, \cite[Theorem 1]{CHEVALLIER20191},  \cite[Theorem 3.10]{agathenerine2021multivariate}) may be stated as follows: for fixed $T>0$, there exists some $C(T)>0$ such that
\begin{equation}\label{eq:chaos_generic}
\sup_{1\leq i \leq N} \mathbf{E}\left(\sup_{s\in [0,T]} \left\vert Z_{N,i}(s) - \overline{Z}_{i}(s) \right\vert \right) \leq \dfrac{C(T)}{\sqrt{N}},
\end{equation}
where $\overline{Z}_{i}$ is a Poisson process with intensity $(\lambda_t(x_i))_{t\geq 0}$ defined in \eqref{eq:def_lambda_lim_generic} suitably coupled to $Z_{N,i}$, see the above references for details. Generically, $C(T)$ is of the form $\exp(CT)$, such that \eqref{eq:chaos_generic} remains only relevant up to $T \sim c \log N$ with $c$ sufficiently small. In the pure mean-field linear case ($w_{ij}^{(N)}=1$, $f(x)=x$), there is a well known phase transition \cite[Theorems 10,11]{delattre2016} when $\Vert  h \Vert_1=\int_0^\infty h(t) dt<1$ (\emph{subcritical case}), $\lambda_t\xrightarrow[t\to\infty]{}\dfrac{\mu}{1-\Vert h \Vert_1}$, whereas when $\Vert h \Vert_1>1$ (\emph{supercritical case}), $\lambda_t\xrightarrow[t\to\infty]{}\infty$. This phase transition has been extended to the inhomogeneous case in \cite{agathenerine2021multivariate}. In the subcritical case, one can actually improve \eqref{eq:chaos_generic} in the sense that $C(T)$ is now linear in $T$ so that \eqref{eq:chaos_generic}  remains relevant up to $T=o(\sqrt{N})$. A natural question is to ask if this approximation remains valid beyond this time scale. The purpose to the present work is to address this question: we show that, in the whole generality of \eqref{eq:def_lambda_generic}, in the subcritical regime and exponential case (see details below), the macroscopic intensity \eqref{eq:def_lambda_lim_generic} converges to a finite limit when  $t\to\infty$ and that the microscopic system remains close to this limit up to polynomial times in $N$.

\subsection{Notation}
We denote by $C_{\text{parameters}}$ a constant $C>0$ which only depends on the parameters inside the lower index. These constants can change from line to line or inside a same equation, we choose just to highlight the dependency they contain. When it is not relevant, we just write $C$. For any $d\geq 1$, we denote by $\vert x\vert$ and $x \cdot y$ the Euclidean norm and scalar product of elements $x,y\in \mathbb{R}^d$. For $(E,\mathcal{A},\mu)$ a measured space, for a function $g$ in $L^p(E,\mu)$ with $p\geq 1$, we write $\Vert g \Vert_{E,\mu,p}:=\left( \int_E \vert g \vert^p d\mu \right)^\frac{1}{p}$. When $p=2$, we denote by $\langle \cdot,\cdot \rangle$ the Hermitian  scalar product in $L^2(E)$. Without ambiguity, we may omit the subscript $(E,\mu)$ or $\mu$. For a  real-valued bounded function $g$ on a space $E$,  we write $\Vert g \Vert _\infty := \Vert g \Vert _{E,\infty}=\sup_{x\in E} \vert g(x) \vert$. 

For $(E,d)$ a metric space, we denote by $ \Vert g \Vert_L = \sup_{x\neq y} \vert g(x) - g(y) \vert / d(x,y)$ the Lipschitz seminorm of a real-valued function $g$ on $E$. We denote by $\mathcal{C}(E,\mathbb{R})$ the space of continuous functions from $E$ to $\mathbb{R}$, and $\mathcal{C}_b(E,\mathbb{R})$ the space of continuous bounded ones. For any $T>0$, we denote by $\mathbb{D}\left([0,T],E\right)$ the space of c\`adl\`ag (right continuous with left limits) functions defined on $[0,T]$ and taking values in $E$. For any integer $N\geq 1$, we denote by $\llbracket 1, N \rrbracket$ the set $\left\{1,\cdots,N\right\}$. For any $p\in [0,1]$, $\mathcal{B}(p)$ denotes the Bernoulli distribution with parameter $p$.

\subsection{The model}

First, let us focus on the interaction between the particles. The graph of interaction for \eqref{eq:def_lambda_generic} is constructed as follows:

\begin{deff}\label{def:espace_proba_bb}
On a common probability space $\left(\widetilde{\Omega}, \widetilde{\mathcal{F}},\mathbb{P}\right)$, we consider a family of random variables $\xi^{(N)}=\left( \xi^{(N)}_{ij}\right)_{N\geq 1, i,j \in \llbracket 1,N \rrbracket}$ on $\widetilde{\Omega}$ such that under $\mathbb{P}$, for any $N\geq 1$ and  $i,j \in \llbracket 1,N \rrbracket$, $\xi^{(N)}$ is a collection of mutually independent Bernoulli random variables such that for $1\leq i,j \leq N$, $\xi_{ij}^{(N)}$ has parameter $W_N(\frac{i}{N},\frac{j}{N})$, where
\begin{equation}\label{eq:def_WN_P}
W_N(x,y):= \rho_N W(x,y),
\end{equation}
with $\rho_N$ some dilution parameter and $W:I^2\to [0,1]$ a macroscopic interaction kernel. We assume that the particles in \eqref{eq:def_lambda_generic} are connected according to the oriented graph $\mathcal{G}_N= \left( \left\{1,\cdots,N\right\} , \xi^{(N)}\right)$. For any $i$ and $j$, $\xi^{(N)}_{ij}=1$ encodes for the presence of the edge $j\to i$ and $\xi^{(N)}_{ij}=0$ for its absence.  The interaction in \eqref{eq:def_lambda_generic} is fixed as 
\begin{equation}\label{eq:def_wij}
w_{ij}^{(N)}=\dfrac{\xi_{ij}^{(N)}}{\rho_N},
\end{equation}
so that the interaction term remains of order 1 as $N\to\infty$.
\end{deff}

The class \eqref{eq:def_WN_P} of inhomogenous graphs falls into the framework of $W$-random graphs, see \cite{Lovsz2006,borgs2008,borgs2012}. One distinguishes the \textbf{dense case} when $\lim_{N\to\infty} \rho_N= \rho>0$ and the \textbf{diluted case} when $\rho_N \to 0$. 

We now fix these sequences, and work on a filtered probability space $\left(\Omega,\mathcal{F},\left(\mathcal{F}_t\right)_{t\geq 0},\mathbf{P}\right)$ rich enough for all the following processes can be defined. We denote by $\mathbf{E}$ the expectation under $\mathbf{P}$ and $\mathbb{E}$ the expectation w.r.t. $ \mathbb{ P}$. In the following definitions, $N$ is fixed and the particles are regularly located on the segment $I=[0,1]$. We denote by $x_i=\frac{i}{N}$ the position of the $i$-th neuron in the population of size $N$. We also divide $I$ in $N$ segments $B_{N,i}=\left(\frac{i-1}{N},\frac{i}{N}\right)$  of equal length.\\

We can now formally define our process of interest.

\begin{deff}\label{def:H2} Let $\left(\pi_i(ds,dz)\right)_{1\leq i \leq N}$ be a sequence of i.i.d. Poisson random measures on $\mathbb{R}_+\times \mathbb{R}_+$ with intensity measure $dsdz$.
A $\left(\mathcal{F}_t\right)$-adapted multivariate counting process  $\left(Z_{N,1}\left(t\right),...,Z_{N,N}\left(t\right)\right)_{t\geq 0}$ defined on $\left(\Omega,\mathcal{F},\left(\mathcal{F}_t\right)_{t\geq 0},\mathbf{P}\right)$ is called \emph{a multivariate Hawkes process} with the set of parameters
$\left(N,F,\xi^{(N)},W_N,\eta,h\right)$ if $\mathbf{P}$-almost surely, for all $t\geq 0$ and $i \in \llbracket 1, N \rrbracket$:
\begin{equation}\label{eq:def_ZiN}
Z_{N,i}(t) = \int_0^t \int_0^\infty \mathbf{1}_{\{z\leq \lambda_{N,i}(s)\}} \pi_i(ds,dz)
\end{equation}
with $\lambda_{N,i}(t)$ defined by
\begin{equation}\label{eq:def_lambdaiN_intro}
\lambda_{N,i}(t)= F(X_{N,i}(t-), \eta_t(x_i)),
\end{equation}
where
\begin{equation}\label{eq:def_UiN}
X_{N,i}(t)=\sum_{j=1}^N \dfrac{w_{ij}^{(N)}}{N}\int_0^{t} h(t-s) dZ_{N,j}(s),
\end{equation}
$\eta~:~[0, +\infty)\times I\longrightarrow \mathbb{R}^d$ for some $d \geq 1$ and $F ~:~  \mathbb{R}\times \mathbb{R}^d  \longrightarrow \mathbb{R}^+$.
\end{deff}
Our main focus is to study the quantity $\left(X_{N,i}\right)_{1\leq i \leq N}$ defined in \eqref{eq:def_UiN} as $N\to\infty$, and more precisely the random profile defined for all $x\in I$ by:
\begin{equation}\label{eq:def_UN}
X_N(t)(x):=\sum_{i=1}^N X_{N,i}(t) \mathbf{1}_{x\in\left(\frac{i-1}{N}, \frac{i}{N}\right]}.
\end{equation}

As $N \to \infty$, an informal Law of Large Numbers (LLN) argument shows that the empirical mean in \eqref{eq:def_lambdaiN_intro} becomes an expectation w.r.t. the candidate limit for $Z_{N,i}$: we can replace the sum in \eqref{eq:def_UiN} by the integral, the microscopic interaction term $w_{ij}^{(N)}$ in \eqref{eq:def_lambdaiN_intro} by the macroscopic term $W(x,y)$ (where $y$ describes the macroscopic distribution of the positions), and the past activity of the neuron $dZ_{N,j}(s)$ by its intensity in large population. In other words,  the macroscopic spatial profile will be described by 
\begin{equation}\label{eq:def_utx}
X_t(x)=\int_{I} W(x,y)\int_0^th(t-s) \lambda_s(y)ds~ dy,
\end{equation}
where the macroscopic intensity  of a neuron at position $x\in I$ denoted by $\lambda_t(x)=F(X_t(x),\eta_t(x))$ solves 
\begin{equation}\label{eq:def_lambdabarre}
\lambda_t(x)=F\left(\int_{I} W(x,y)\int_0^t h(t-s) \lambda_s(y)dsdy,\eta_t(x)\right).
\end{equation}
Such informal law of large number on a bounded time interval has been made rigorous under various settings, we refer for further references to \cite{delattre2016,CHEVALLIER20191} and more especially to \cite{agathenerine2021multivariate} which exactly incorporates the present hypotheses.
\begin{remark}\label{rem:F-ou-f}
In the expression \eqref{eq:def_lambdaiN_intro} of the intensity $\lambda_{N, i}$, $X_{N, i}$ given in \eqref{eq:def_UiN} accounts for the stochastic influence of the other interacting neurons, whereas $\eta_t$ represents the deterministic part of the intensity $\lambda_{N, i}$. Having in mind the generic example given in \eqref{eq:def_lambda_generic}, a typical choice would correspond to taking $d=2$ with $\eta:=(\mu, v)$ and 
\begin{equation}
\label{eq:gen_F}
F(X, \eta)= F(X, \mu, v)= \mu + f(v + X)
\end{equation} 
Once again, $\mu$ here corresponds to the spontaneous Poisson activity of the neuron and one may see $v$ as a deterministic part in the evolution of the membrane potential of neuron $i$. Note that we generalize here slightly the framework considered in \cite{CHEVALLIER20191} in the sense that \cite{CHEVALLIER20191} considered \eqref{eq:gen_F} for $\mu\equiv 0$ and $v_t(x)= e^{-\alpha t} v_0(x)$ for some initial membrane potential $v_0(x)$. In the case of \eqref{eq:gen_F}, one retrieves the expression of the macroscopic intensity $\lambda_t(x)$ given in \eqref{eq:def_lambda_lim_generic}. Typical choices of $f$ in \eqref{eq:gen_F} are $f(x)=x$ (the so-called linear model) or some sigmoïd function. Note that there will be an intrinsic mathematical difficulty in dealing with the linear case in this paper, as $f$ is not bounded in this case. As already mentioned in the introduction, for the choice of $h(t)= e^{-\alpha t}$ and $v_t(x)=e^{-\alpha t}v_0(x)$, a straightforward calculation shows that $u_t(x):= v_t(x)+ X_t(x)$ solves  the scalar neural field equation \eqref{eq:NFE} 
with source term $I_t(x)= \int_I W(x,y)\mu(t,y)dy$.

We choose here to work with the generic expression \eqref{eq:def_lambdaiN_intro} instead of \eqref{eq:def_lambda_generic} not only for conciseness of notation, but also to emphasize that the result does not intrinsically depend on the specific form of the function $F$.
\end{remark}

\subsection*{Acknowledgements}

This is a part of my PhD thesis. I would like to thank my PhD supervisors Eric \textsc{Lu\c con} and Ellen \textsc{Saada} for introducing this subject, for their useful advices and for their encouragement. This research has been conducted within the FP2M federation (CNRS FR 2036), and is supported by ANR-19-CE40-0024 (CHAllenges in MAthematical NEuroscience) and ANR–19–CE40–0023 (Project PERISTOCH).

\section{Hypotheses and main results}

\subsection{Hypotheses}

\begin{hyp}\label{hyp_globales}
We assume that
\begin{itemize}
\item $F$ is Lipschitz continuous : there exists $\Vert F \Vert_{L}$ such that for any $x, x'\in  \mathbb{R}$, $\eta,\eta'\in  \mathbb{R}^d$, we have $\vert F(x,\eta) - F(x',\eta') \vert \leq \Vert F \Vert_{L} \left( \vert x-x'\vert +  \vert \eta-\eta'\vert \right)$.
\item $F$ is non decreasing in the first variable, that is for any $\eta\in \mathbb{R}^d$, for any $x, x'\in  \mathbb{R}$ such that $x\leq x'$, one has $F(x,\eta)\leq F(x',\eta)$. Moreover, we assume that $F$ is $\mathcal{C}^2$ on $\mathbb{R}^{d+1}$ with bounded derivatives. We denote by $\partial_x F$ and $\partial_x^2 F$ the partial derivatives of $F$ w.r.t. $x$ and (with some slight abuse of notation) $\partial_\eta F= \left(\partial_{\eta_k}F\right)_{k=1, \ldots d}$ as the gradient of $F$ w.r.t. the variable $\eta\in \mathbb{R}^d$ as well as $\partial_{x, \eta}^2 F= \left(\partial_{x, \eta_k}^2 F\right)_{k=1, \ldots d}$ and $\partial_\eta^2 F= \left(\partial^2_{\eta_k, \eta_l}F\right)_{k,l=1, \ldots d}$ the Hessian of $F$ w.r.t. the variable $\eta$.
\item $\left(\eta_t(x)\right)_{t\geq 0,x\in I}$ is uniformly bounded in $(t,x)$. We also assume that there exists $\eta_\infty$ Lipschitz continuous on $I$ such that 
\begin{equation}\label{eq:def_delta_s}
\delta_t:=\sup_{x\in I} \left| \eta_t(x)-\eta_\infty(x)\right| \xrightarrow[t\to\infty]{}0.
\end{equation}  
\item The memory kernel $h$ is nonnegative and integrable on $[0,+\infty)$.
\item We assume that $W:I^2\to [0,1]$ is continuous.  We refer nonetheless to Section \ref{S:extension} where we show that the results of the paper remain true under weaker hypotheses on $W$.
\end{itemize}
\end{hyp}

It has been showed in \cite{agathenerine2021multivariate} that the process defined in \eqref{eq:def_ZiN} is well-posed, and that the large population limit intensity \eqref{eq:def_lambdabarre}
is well defined in the following sense.
\begin{prop}
\label{prop:exis_H_N} Under Hypothesis \ref{hyp_globales}, for a fixed realisation of the family $\left(\pi_i\right)_{1\leq i \leq N}$,  there exists a pathwise unique multivariate Hawkes process (in the sense of Definition \ref{def:H2}) such that for any $T<\infty$, $\sup_{t\in [0,T]} \sup_{1\leq i \leq N} \mathbf{E}[Z_{N,i}(t)] <\infty$.
\end{prop}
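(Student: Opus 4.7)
This is a standard well-posedness statement for a Hawkes-type system driven by the Poisson random measures $(\pi_i)_{i}$, and the natural route is a Picard iteration on an arbitrary bounded interval $[0,T]$. I would work at a fixed realisation of the graph $\xi^{(N)}$: since $N$ is fixed and each weight satisfies $|w_{ij}^{(N)}|\le 1/\rho_N$, everything below reduces to a finite system of coupled intensities. Define a sequence $(Z_{N,i}^{(n)})_{n\ge 0}$ by $Z_{N,i}^{(0)}\equiv 0$ and
\begin{equation*}
Z_{N,i}^{(n+1)}(t)=\int_0^t\int_0^\infty \mathbf{1}_{\{z\le \lambda_{N,i}^{(n)}(s)\}}\,\pi_i(ds,dz),
\end{equation*}
where $\lambda_{N,i}^{(n)}$ is built from the $Z_{N,j}^{(n)}$ exactly as in \eqref{eq:def_lambdaiN_intro}--\eqref{eq:def_UiN}. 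The goal is to show that $(Z_{N,i}^{(n)})$ is Cauchy in $L^1(\mathbf{P})$ uniformly on $[0,T]$, hence converges to a solution of \eqref{eq:def_ZiN}; uniqueness will follow from an identical Gronwall-type argument applied to two hypothetical solutions.

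The first step is a uniform-in-$n$ moment bound on $m_i^{(n)}(t):=\mathbf{E}[Z_{N,i}^{(n)}(t)]$. Taking expectations in the defining equation and using the Lipschitz property of $F$ together with the uniform boundedness of $\eta$ (Hypothesis \ref{hyp_globales}), one obtains
\begin{equation*}
m_i^{(n+1)}(t)\le C_1 t + C_2 \sum_{j=1}^N \int_0^t \int_0^s h(s-u)\,dm_j^{(n)}(u)\,ds,
\end{equation*}
with $C_1,C_2$ depending on $N,\rho_N, \|F\|_L, \|\eta\|_\infty, F(0,0)$. Using $\int_0^t h(s-u)\,ds \le \|h\|_1$ and Fubini, a convolution-type Gronwall argument (as in the proof of \cite[Theorem 6]{delattre2016}) yields $\sup_{n}\sup_{t\le T}\max_i m_i^{(n)}(t)\le C(T,N)<\infty$.

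Next I would establish the contraction. Setting $\Delta_i^{(n)}(t):=\mathbf{E}[|Z_{N,i}^{(n+1)}(t)-Z_{N,i}^{(n)}(t)|]$ and exploiting that the two iterates are driven by the same Poisson measures $\pi_i$, the elementary inequality $|\mathbf{1}_{z\le a}-\mathbf{1}_{z\le b}|=\mathbf{1}_{\min(a,b)<z\le \max(a,b)}$ combined with Fubini and the Lipschitz property of $F$ gives
\begin{equation*}
\Delta_i^{(n)}(t)\le \|F\|_L \int_0^t \sum_{j=1}^N \frac{|w_{ij}^{(N)}|}{N} \int_0^s h(s-u)\,d\Delta_j^{(n-1)}(u)\,ds.
\end{equation*}
Iterating this convolution inequality and using $\|h\|_1<\infty$, together with the uniform bound from the previous step to start the induction, one obtains $\max_i \sup_{t\le T}\Delta_i^{(n)}(t)\le (C(T,N))^n/n!$, which is summable. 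Hence $(Z_{N,i}^{(n)})_n$ is Cauchy in $L^1$, the limit $Z_{N,i}$ satisfies \eqref{eq:def_ZiN}, and the moment bound passes to the limit. Pathwise uniqueness follows by applying the exact same convolution--Gronwall argument to the difference of two candidate solutions.

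The only genuinely delicate point is the convolution structure in the Gronwall step: the classical differential form does not apply directly, so one needs the iterated--kernel version that exploits $h\in L^1$. This is a well-understood technique in the Hawkes literature and was already carried out for the present inhomogeneous setting in \cite{agathenerine2021multivariate}, which the author will presumably invoke.
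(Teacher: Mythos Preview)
Your proposal is correct and your final prediction is exactly right: the paper does not prove this proposition at all but simply refers to \cite{agathenerine2021multivariate} (Proposition~2.5 there), remarking that the same proof carries over to the present slightly more general $F$. The Picard iteration with the convolution--Gr\"onwall argument you sketch is precisely the method used in that reference (itself adapted from \cite{delattre2016}), so there is nothing to add.
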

\begin{prop}
\label{prop:exis_lambda_barre}
Let $T>0$. Under Hypothesis \ref{hyp_globales}, there exists a unique solution  $\lambda$ in $\mathcal{C}_b([0,T]\times I, \mathbb{R})$	to \eqref{eq:def_lambdabarre}  and this solution is nonnegative. 
\end{prop}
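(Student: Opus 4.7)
The plan is to apply Banach's fixed point theorem to the Picard operator $\Phi$ defined on the Banach space $\mathcal{C}([0,T]\times I,\mathbb{R})$ (which coincides with $\mathcal{C}_b([0,T]\times I,\mathbb{R})$ by compactness of $[0,T]\times I$) by
\[
\Phi(\lambda)_t(x) := F\!\left(\int_I W(x,y)\int_0^t h(t-s)\lambda_s(y)\,ds\,dy,\ \eta_t(x)\right);
\]
the fixed points of $\Phi$ are exactly the continuous bounded solutions of \eqref{eq:def_lambdabarre}.

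First I would check that $\Phi$ maps this space into itself. If $\lambda$ is continuous on $[0,T]\times I$, then by continuity of $W$, integrability of $h$, continuity of $F$ in both variables and the assumed regularity of $\eta$, dominated convergence yields that $\Phi(\lambda)$ is continuous in $(t,x)$. The bound $W\le 1$ together with $\lvert I\rvert=1$ and $h\in L^1$ gives that the first argument of $F$ is dominated by $\lVert\lambda\rVert_\infty\,\Vert h \Vert_1$, so Lipschitz continuity of $F$ and boundedness of $\eta$ make $\Phi(\lambda)$ bounded as well. Finally, $\Phi(\lambda)\ge 0$ since $F$ takes values in $\mathbb{R}_+$, which will directly deliver the nonnegativity part once we have existence.

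For existence and uniqueness, I would introduce the weighted norm $\lVert\lambda\rVert_\beta := \sup_{(t,x)\in[0,T]\times I} e^{-\beta t}\lvert\lambda_t(x)\rvert$, equivalent to $\lVert\cdot\rVert_\infty$ for any $\beta>0$. Using $W\le 1$, $\lvert I\rvert=1$ and the Lipschitz property of $F$ (the $\eta$-arguments are identical and cancel), for any $\lambda,\tilde\lambda$:
\[
\lvert\Phi(\lambda)_t(x)-\Phi(\tilde\lambda)_t(x)\rvert \le \Vert F \Vert_L \int_0^t h(t-s)\sup_{y\in I}\lvert\lambda_s(y)-\tilde\lambda_s(y)\rvert\,ds,
\]
hence multiplying by $e^{-\beta t}$ and changing variables $u=t-s$:
\[
e^{-\beta t}\lvert\Phi(\lambda)_t(x)-\Phi(\tilde\lambda)_t(x)\rvert \le \Vert F \Vert_L\,\lVert\lambda-\tilde\lambda\rVert_\beta \int_0^\infty h(u)e^{-\beta u}\,du.
\]
Since $h\in L^1(\mathbb{R}_+)$, Lebesgue's dominated convergence theorem gives $\int_0^\infty h(u)e^{-\beta u}\,du\to 0$ as $\beta\to\infty$; choosing $\beta$ large enough so that $\Vert F \Vert_L\int_0^\infty h(u)e^{-\beta u}\,du<1$ makes $\Phi$ a strict contraction on $(\mathcal{C}_b([0,T]\times I,\mathbb{R}),\lVert\cdot\rVert_\beta)$, and Banach's theorem yields the unique fixed point.

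No step is truly hard. The only mild subtlety is that $h$ is only assumed integrable (and not bounded), which rules out a direct Grönwall argument and motivates the weighted-norm approach; an alternative would be to apply Banach's theorem on a short interval $[0,T_0]$ with $\Vert F \Vert_L\int_0^{T_0} h(s)\,ds<1$ and extend step by step to $[0,T]$ by absorbing the already-constructed past into a source term, but this is notationally more cumbersome than the weighted norm.
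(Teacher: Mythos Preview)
Your argument is correct and self-contained: the weighted-norm Banach fixed point approach is the standard way to handle convolution equations of this type when $h$ is merely $L^1$, and your contraction estimate is clean. The paper itself does not give an independent proof of this proposition; it simply refers to \cite{agathenerine2021multivariate} (Proposition~2.7 there), remarking that the argument carries over to the present generality of $F$. The proof in that reference is indeed a Picard/fixed-point argument of the same flavour, so your route matches the intended one.

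One small caveat worth flagging: for $\Phi$ to map $\mathcal{C}_b([0,T]\times I)$ into itself you need $(t,x)\mapsto\eta_t(x)$ to be continuous, but Hypothesis~\ref{hyp_globales} as stated only asserts uniform boundedness of $\eta$ and the asymptotic condition \eqref{eq:def_delta_s}. This continuity is implicitly required for the conclusion $\lambda\in\mathcal{C}_b([0,T]\times I)$ to make sense and is assumed in the cited reference; your phrase ``the assumed regularity of $\eta$'' is exactly where this enters, so it would be worth stating explicitly that you are using joint continuity of $\eta$ here (or, more precisely, whatever regularity is inherited from the concrete examples $\eta=(\mu,v)$ in Remark~\ref{rem:F-ou-f}).
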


Both Propositions \ref{prop:exis_H_N} and \ref{prop:exis_lambda_barre} can be found in \cite{agathenerine2021multivariate} as Propositions 2.5 and 2.7 respectively, where $F$ is chosen as $\eta=(\mu, v)$ and $F(x,\eta)=f(x+v)$ with $f$ a Lipschitz function. The same proofs work for our general case $F$. Proposition \ref{prop:exis_lambda_barre} also implies that the limiting spatial profile $X_t$ solving \eqref{eq:def_utx} is well defined.\\

Before writing our next hypothesis, we need to introduce the following integral operator.
\begin{prop}\label{prop:proprietes_TW} Under Hypothesis \ref{hyp_globales}, the integral operator
\begin{equation*}
  \begin{array}{rrcl}
T_W:&   H & \longrightarrow & H \\
   & g & \longmapsto & \left(T_Wg : x \longmapsto \int_I W(x,y) g(y) dy \right)
  \end{array}
\end{equation*}
is continuous in both cases $H=L^\infty(I)$ and $H=L^2(I)$. When $H=L^2(I)$, $T_W$ is compact, its spectrum is the union of $\{0\}$ and a discrete sequence of eigenvalues $(\mu_{n})_{ n\geq1}$ such that $ \mu_{n}\to0$ as $n\to\infty$. Denote by $r_{\infty}=r_\infty(T_W)$, respectively $r_2=r_2(T_W)$ the spectral radii of $T_W$ in $L^\infty(I)$ and $L^2(I)$ respectively. Moreover, we have that
\begin{equation}
\label{eq:spectral_radii_equal}
r_{ 2}(T_{ W})= r_{ \infty}(T_W).
\end{equation}
\end{prop}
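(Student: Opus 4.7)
The plan is to verify the four claims (continuity on each space, compactness on $L^2$, spectral structure, equality of spectral radii) in order, with only the last one requiring a genuine argument beyond standard functional analysis.

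First, continuity in both spaces is immediate from $\Vert W \Vert_\infty \leq 1$: for $g\in L^\infty(I)$, $\Vert T_W g \Vert_\infty \leq \Vert W \Vert_\infty \Vert g \Vert_\infty$, and for $g\in L^2(I)$, the Cauchy-Schwarz inequality gives $\vert T_W g(x) \vert \leq \Vert W(x,\cdot) \Vert_2 \Vert g \Vert_2$, whence $\Vert T_W g \Vert_2 \leq \Vert W \Vert_{L^2(I^2)} \Vert g \Vert_2 \leq \Vert g \Vert_2$. For compactness on $L^2$, since $W$ is continuous on the compact set $I^2$ it lies in $L^2(I^2)$, so $T_W$ is a Hilbert-Schmidt operator and therefore compact on $L^2(I)$. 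The description of the spectrum as $\{0\}\cup\{\mu_n\}_{n\geq 1}$ with $\mu_n\to 0$ is then a direct consequence of the Riesz-Schauder theorem applied to the compact operator $T_W$ on the Hilbert space $L^2(I)$.

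The substantive part is the identity \eqref{eq:spectral_radii_equal}. The strategy is to prove that $T_W$ is compact on $L^\infty(I)$ as well, and that the nonzero eigenvalues of $T_W$ acting on $L^2(I)$ and on $L^\infty(I)$ coincide; since the nonzero spectrum of a compact operator on a Banach space reduces to its eigenvalues, this will force $r_2(T_W)=r_\infty(T_W)$. Compactness on $L^\infty$ follows from Arzelà-Ascoli: if $(g_n)$ is bounded in $L^\infty(I)$, then uniform continuity of $W$ on the compact $I^2$ yields, uniformly in $n$, $\vert T_W g_n(x)-T_W g_n(x')\vert \leq \Vert g_n \Vert_\infty \int_I \vert W(x,y)-W(x',y)\vert dy \to 0$ as $\vert x-x'\vert\to 0$, so $(T_W g_n)$ is an equicontinuous and uniformly bounded family in $\mathcal{C}(I)$, hence admits a subsequence convergent in $L^\infty(I)$.

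To compare the nonzero eigenvalues, note first that the same Cauchy-Schwarz argument shows $T_W$ maps $L^2(I)$ continuously into $\mathcal{C}(I)$: for $g\in L^2(I)$, $\vert T_W g(x)-T_W g(x')\vert \leq \Vert W(x,\cdot)-W(x',\cdot) \Vert_2 \Vert g \Vert_2 \to 0$ by uniform continuity of $W$. Consequently, if $\mu\neq 0$ and $\phi\in L^2(I)$ satisfies $T_W\phi=\mu\phi$, then $\phi=\mu^{-1}T_W\phi\in\mathcal{C}(I)\subset L^\infty(I)$, so $\mu$ is an eigenvalue of $T_W$ on $L^\infty(I)$. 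Conversely, since $I$ is bounded, $L^\infty(I)\subset L^2(I)$, so any $L^\infty$-eigenvector is automatically an $L^2$-eigenvector with the same eigenvalue. Thus the sets of nonzero eigenvalues on the two spaces coincide, and since on each Banach space the spectral radius of the compact operator $T_W$ equals the supremum of the moduli of its eigenvalues (with the convention that $r=0$ if there are none), we obtain \eqref{eq:spectral_radii_equal}. The main point of care is the smoothing statement $T_W(L^2(I))\subset \mathcal{C}(I)$, which is the mechanism promoting $L^2$-eigenfunctions to continuous (hence $L^\infty$) ones and is the only non-formal ingredient in the argument.
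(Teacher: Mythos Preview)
Your proof is correct and shares the same overall architecture as the paper's: establish compactness on both $L^2(I)$ and $L^\infty(I)$, then identify the nonzero spectra by bootstrapping eigenfunctions between the two spaces via the smoothing property of $T_W$. The one genuine difference is in how each of you obtains compactness on $L^\infty$ and where you place the bootstrap. You work directly with $T_W$: continuity of $W$ on the compact $I^2$ lets you invoke Arzel\`a--Ascoli to get compactness of $T_W:L^\infty(I)\to L^\infty(I)$, and the same uniform continuity gives $T_W(L^2(I))\subset\mathcal{C}(I)$, which promotes $L^2$-eigenfunctions to $L^\infty$. The paper instead proves a more general lemma (Lemma~\ref{lem:op_radius}) under the weaker hypothesis $\sup_x\int_I K(x,y)^2\,dy<\infty$ with only $T_K^2$ compact on $L^2$; it then shows $T_K^2:L^\infty\to L^\infty$ is compact by factoring through the $L^2$-compactness of $T_K$ (a bounded $L^\infty$-sequence is bounded in $L^2$, extract an $L^2$-convergent subsequence of $T_K f_n$, then use $|T_K g(x)|\leq C_K\|g\|_2$ to upgrade to uniform convergence of $T_K^2 f_n$), and finally passes from $r(T_K^2)$ back to $r(T_K)$ via $r(T^p)=r(T)^p$. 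Your argument is cleaner and more direct in the present setting of continuous $W$, while the paper's detour through $T_K^2$ buys generality: it applies to the bounded but possibly discontinuous kernels considered in Section~\ref{S:extension} and to the weighted operator $\mathcal{U}=T_W(G\,\cdot)$ of Proposition~\ref{prop:operateur_L}, where Arzel\`a--Ascoli is not immediately available.
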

The proof can be found in Section \ref{S:proof_det_ut}.

\begin{hyp}\label{hyp:subcritical}
In the whole article, we are in the subcritical case defined by
\begin{equation}\label{eq:def_subcritical}
\left\Vert \partial_x F \right\Vert_\infty \Vert h \Vert _1 r_\infty < 1.
\end{equation}
\end{hyp}

Note that in the complete mean-field case, $W\equiv 1$ and $r_\infty=1$ so that one retrieves the usual subcritical condition as in \cite{delattre2016}. In the linear case $\eta=\mu$ and $F(x, \eta)= \mu +x$, \eqref{eq:def_subcritical} is exactly the subcritical condition stated in \cite{agathenerine2021multivariate}.

The aim of the paper is twofold: firstly, we state a general convergence result as $t\to\infty$ of $X_t$ defined in \eqref{eq:def_utx} (or equivalently $ \lambda_t$ in \eqref{eq:def_lambdabarre}), see Theorem~\ref{thm:large_time_cvg_u_t}. This result is valid for any general kernel $h$ satisfying Hypothesis~\ref{hyp_globales}. Secondly, we address the long-term stability of the microscopic profile $X_N$ defined in \eqref{eq:def_UN}, see Theorem~\ref{thm:long_time}. Contrary to the first one, this second result is stated for the particular choice of the exponential kernel $h$ defined as
\begin{equation}\label{eq:def_exponential}
h(t)=e^{-\alpha t}, \text{with }\alpha>0.
\end{equation}
The parameter $\alpha>0$ is often addressed as the leakage rate.The main advantage of this choice is that the process $X_N$ then becomes Markovian (see e.g. \cite[Section~5]{Ditlevsen2017}). This will turn out to be particularly helpful for the proof of Theorem~\ref{thm:long_time}. As already mentioned in the introduction, \eqref{eq:def_exponential} is the natural framework where to observe the NFE \eqref{eq:NFE} as a macroscopic limit, recall Remark~\ref{rem:F-ou-f}. Note that in the exponential case \eqref{eq:def_exponential}, the subcritical case \eqref{eq:def_subcritical} reads
\begin{equation}\label{eq:def_sub_exp}
\left\Vert \partial_x F \right\Vert_\infty r_\infty < \alpha.
\end{equation}
For our second result (Theorem~\ref{thm:long_time}), we also need some hypotheses on the dilution of the graph. Recall the definition of $\rho_N$ in Definition~\ref{def:espace_proba_bb}.
\begin{hyp}\label{hyp:scenarios} The dilution parameter $\rho_N \in [0, 1]$ satisfies the following dilution condition: there exists $\tau\in (0,\frac{1}{2})$ such that 
\begin{equation}\label{eq:dilution}
N^{1-2\tau}\rho_N^4\xrightarrow[N\to\infty]{}\infty.
\end{equation} 
If one supposes further that $F$ is bounded, we assume the weaker condition
\begin{equation}\label{eq:dilution_Fbounded}
N\rho_N^2\xrightarrow[N\to\infty]{}\infty.
\end{equation}
\end{hyp}

\begin{rem}
Hypothesis~\ref{hyp:scenarios} is stronger than $ \frac{N\rho_N}{\log N}\xrightarrow[N\to\infty]{} \infty$, which is a dilution condition commonly met in the literature concerning LLN results on bounded time intervals for interacting particles on random graphs: it is the same as in \cite{DelattreGL2016,Coppini2019} (and slightly stronger than the optimal $N\rho_N\to +\infty$ obtained in \cite{Coppini_Lucon_Poquet2022} in the case of diffusions and as in \cite{agathenerine2021multivariate} in the case of Hawkes processes).
\end{rem}

\subsection{Main results}

Our first result, Theorem \ref{thm:large_time_cvg_u_t}, studies the limit as $t\to\infty$ of the macroscopic profile $X_t$ (as an element of $\mathcal{C}(I)$) defined in \eqref{eq:def_utx}. Our second result, Theorem \ref{thm:long_time}, focuses on the large time behaviour of $X_N(t)$ defined in \eqref{eq:def_UN} on any time interval of polynomial length. 

\subsubsection{Asymptotic behavior of $(X_t)$}
Recall the definition of $X_t$ in \eqref{eq:def_utx}.
\begin{thm}\label{thm:large_time_cvg_u_t}
Under Hypotheses \ref{hyp_globales} and \ref{hyp:subcritical}, 
\begin{enumerate}[label=(\roman*)]
\item  there exists a unique continuous function $X_\infty:I\mapsto \mathbb{R}^+$ solution of
\begin{equation}\label{eq:def_u_infty}
X_\infty=\Vert h \Vert_1 T_W F\left( X_\infty,\eta_\infty\right).
\end{equation}
\item $\left(X_t\right)_{t\geq 0}$ converges uniformly on $I$ when $t\to\infty$ towards $X_\infty$.
\end{enumerate}
\end{thm}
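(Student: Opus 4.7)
The plan is to realise $X_\infty$ as the unique fixed point of a contraction (up to iteration) and then, for (ii), to derive an integral inequality on $\xi(t, x) := X_t(x) - X_\infty(x)$ that closes after a Gelfand iteration argument. Both parts rest on the key observation that although $\|T_W\|_{L^\infty \to L^\infty}$ may be strictly larger than $r_\infty$, Gelfand's formula $\|T_W^n\|^{1/n} \to r_\infty$ lets us exploit the subcritical condition $\|\partial_x F\|_\infty \|h\|_1 r_\infty < 1$ after $n$ iterations.

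For part (i), I would introduce
\[
\Phi : \mathcal{C}(I, \mathbb{R}) \longrightarrow \mathcal{C}(I, \mathbb{R}), \qquad \Phi(u)(x) := \|h\|_1 \int_I W(x, y) F(u(y), \eta_\infty(y)) \, dy,
\]
which is well-defined and valued in nonnegative functions thanks to continuity of $W$ and $\eta_\infty$ and the fact that $F \ge 0$ is Lipschitz. A straightforward iteration of $\Phi$ yields
\[
\|\Phi^n(u) - \Phi^n(v)\|_\infty \leq \bigl(\|\partial_x F\|_\infty \|h\|_1\bigr)^n \|T_W^n\|_{L^\infty \to L^\infty} \|u - v\|_\infty,
\]
and Gelfand's formula together with Hypothesis \ref{hyp:subcritical} makes this a strict contraction for $n$ large enough. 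Banach's fixed point theorem on $\mathcal{C}(I, \mathbb{R})$ then gives existence and uniqueness of $X_\infty$; nonnegativity follows by iterating $\Phi$ from the function $0$.

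For part (ii), after checking that $\sup_{t \ge 0} \|X_t\|_\infty < \infty$ (via the same Gelfand trick applied to \eqref{eq:def_utx} and the bound $F(X, \eta) \le F(0, \eta) + \|\partial_x F\|_\infty |X|$), I would subtract \eqref{eq:def_u_infty} from \eqref{eq:def_utx}, perform the substitution $u = t - s$, split $\|h\|_1 = \int_0^t h + \int_t^\infty h$, and use Lipschitz continuity of $F$ to obtain
\[
|\xi(t, x)| \leq \|\partial_x F\|_\infty \int_0^t h(t - s) \bigl(T_W |\xi(s, \cdot)|\bigr)(x) \, ds + e(t, x),
\]
with $e(t, x)$ gathering a term proportional to $\int_0^t h(u) \delta_{t - u} \, du$ (convergent to $0$ uniformly in $x$ by dominated convergence, since $\delta_s \to 0$ and $h \in L^1$) and a term proportional to $\int_t^\infty h(u) \, du$ times the bounded function $T_W F(X_\infty, \eta_\infty)$. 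Setting $\alpha(t) := \|\xi(t, \cdot)\|_\infty$ (bounded by the preliminary step) and iterating the inequality $n$ times, I would unfold the $n$-fold convolution via the changes of variables $u_i := s_{i-1} - s_i$ to reach
\[
\alpha(t) \leq \bigl(\|\partial_x F\|_\infty\bigr)^n \|T_W^n\|_{L^\infty \to L^\infty} \int_{\substack{u_1, \dots, u_n \ge 0 \\ u_1 + \cdots + u_n \le t}} h(u_1) \cdots h(u_n) \, \alpha\!\Bigl(t - \sum_{i=1}^n u_i\Bigr) du + E_n(t),
\]
where $E_n(t) \to 0$ as $t \to \infty$ by $n$ iterated applications of dominated convergence on the lower-order error terms. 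Taking $\ell := \limsup_{t \to \infty} \alpha(t)$ and passing to the limsup (yet again by dominated convergence, using boundedness of $\alpha$) produces $\ell \leq \bigl(\|\partial_x F\|_\infty \|h\|_1\bigr)^n \|T_W^n\|_{L^\infty \to L^\infty} \ell$; choosing $n$ large so that this coefficient is strictly less than one forces $\ell = 0$, hence the claimed uniform convergence since $\alpha \ge 0$.

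The main obstacle is precisely the gap between the operator norm and the spectral radius: a one-step Banach contraction with constant $\|\partial_x F\|_\infty \|h\|_1 \|T_W\|_{L^\infty \to L^\infty}$ need not close under Hypothesis \ref{hyp:subcritical}, so the whole argument (both the fixed-point statement and the large-time convergence) must be organised around an $n$-fold iteration, which is what Gelfand's formula is designed for. A secondary technical care is to justify $\sup_t \|X_t\|_\infty < \infty$ in the linear (unbounded $F$) regime by the same iteration, and to check that all iterated error terms entering $E_n(t)$ do decay to $0$.
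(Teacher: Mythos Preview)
Your argument is correct and takes a genuinely different route from the paper. The paper works at the level of the intensity $\lambda_t$ rather than $X_t$: it first bounds $\sup_t\|\lambda_t\|_\infty$ by a Gelfand iteration (as you do), but then proves \emph{pointwise} convergence of $\lambda_t$ via a $\limsup/\liminf$ sandwich that relies essentially on the monotonicity of $F$ in its first argument (showing $H\underline\ell\le\underline\ell\le\overline\ell\le H\overline\ell$ for $H(l)=F(\|h\|_1 T_W l,\eta_\infty)$, then forcing $\underline\ell=\overline\ell$ because an iterate of $H$ contracts). Uniform convergence is recovered afterwards through an equicontinuity argument on the family $(\lambda_t)_t$. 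By contrast, you subtract the fixed-point equation from \eqref{eq:def_utx}, use only the Lipschitz bound $|F(X,\eta)-F(X',\eta')|\le \|\partial_xF\|_\infty|X-X'|+C|\eta-\eta'|$, and iterate the resulting convolution inequality $n$ times before taking $\limsup$. This yields uniform convergence directly and never invokes monotonicity of $F$; it is thus slightly more robust (the monotonicity in Hypothesis~\ref{hyp_globales} becomes superfluous for this theorem). The paper's route, on the other hand, is the classical one for Hawkes intensities and would extend more naturally to situations where $F$ is monotone but not globally Lipschitz. The fixed-point part~(i) is handled identically in spirit in both proofs.
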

\begin{remark}\label{rem:correspondance_Xt_ell}
Translating the result of Theorem~\ref{thm:large_time_cvg_u_t} in terms of the macroscopic intensity $\lambda_t$ defined in \eqref{eq:def_lambdabarre} gives immediately that $ \lambda_t$ converges uniformly to $ \ell$ solution to
\begin{equation}
\label{eq:def_l_lim}
    \ell= F \left(\Vert h\Vert_1 T_W \ell, \eta_\infty\right)
\end{equation}
The correspondence between $X_\infty$ and $\ell$ (recall \eqref{eq:def_utx}) is simply given by $X_\infty= \Vert h \Vert_1 T_W \ell$.
\end{remark}

\begin{remark}\label{rem:sys_lin}
In the particular case of an exponential memory kernel \eqref{eq:def_sub_exp}, as a straightforward consequence of the expression of $X_t$ in \eqref{eq:def_utx} and $X_\infty$ in \eqref{eq:def_u_infty}, we have the following differential equation
\begin{equation}\label{eq:dynamic_ut_uinfty}
\partial_t \left( X_t-X_\infty\right)=-\alpha\left( X_t-X_\infty\right) + T_W \left( F(X_t,\eta_t) - F(X_\infty,\eta_\infty)\right).
\end{equation}
A simple Taylor expansion of $X_t$ around $X_\infty$ shows that the linearised system associated to the nonlinear \eqref{eq:dynamic_ut_uinfty} is then
\begin{equation}\label{eq:sys_lin_Y_t}
\partial_t Y_t = -\alpha Y_t + T_W \left( G Y_t \right),
\end{equation}
where 
\begin{equation}\label{eq:def_G}
G:=\partial_x F(X_\infty,\eta_\infty).
\end{equation}
\end{remark}
The subcritical condition \eqref{eq:def_sub_exp} translates into the existence of a spectral gap for the linear dynamics \eqref{eq:sys_lin_Y_t}, which makes the stationary point $X_\infty$ linearly stable. More precisely,

\begin{prop}\label{prop:operateur_L} Assume that the memory kernel $h$ is exponential \eqref{eq:def_sub_exp}. Define the linear operator 
\begin{equation} \label{eq:def_operator_L}
  \begin{array}{rrcl}
\mathcal{L}:&    L^2 (I) & \longrightarrow &L^2 (I) \\
   & g & \longmapsto & \mathcal{L}(g)=-\alpha g + T_W( Gg).
  \end{array}
\end{equation}
Then under Hypotheses \ref{hyp_globales} and \ref{hyp:subcritical}, $\mathcal{L}$ generates a contraction semi-group on $L^2(I)$ $\left(e^{t\mathcal{L}}\right)_{t\geq 0}$ such that for any $g\in L^2(I)$
\begin{equation}\label{eq:contraction_sg}
\Vert e^{t\mathcal{L}} g \Vert_2 \leq e^{-t\gamma} \Vert g \Vert_2,
\end{equation}
where
 \begin{equation}\label{eq:def_gamma}
\gamma:=\alpha-r_\infty \left\Vert \partial_uF\right\Vert_\infty>0.
\end{equation}
\end{prop}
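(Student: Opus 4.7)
The plan is to exploit the fact that $\mathcal{L}$ is actually a bounded linear operator on $L^2(I)$, which automatically makes it the generator of a uniformly continuous semigroup, and then to derive the exponential decay by a direct energy estimate in $L^2$. Boundedness follows from Proposition~\ref{prop:proprietes_TW} (which ensures the continuity of $T_W$ on $L^2(I)$) and from the $\mathcal{C}^2$ regularity of $F$ in Hypothesis~\ref{hyp_globales} (which guarantees that $G=\partial_xF(X_\infty,\eta_\infty)$ is bounded with $\Vert G\Vert_\infty\leq\Vert\partial_xF\Vert_\infty$): multiplication by $G$ is then a bounded operator, and so is $\mathcal{L}=-\alpha I+T_W\circ M_G$. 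The semigroup is then defined by the norm-convergent series $e^{t\mathcal{L}}=\sum_{k\geq0}\frac{t^k\mathcal{L}^k}{k!}$.

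For the exponential contraction, I would fix $g\in L^2(I)$ and set $u_t:=e^{t\mathcal{L}}g$, which satisfies $\partial_t u_t=\mathcal{L}u_t$ in $L^2(I)$. Differentiating the squared norm along the flow yields
\begin{equation*}
\frac{1}{2}\frac{d}{dt}\Vert u_t\Vert_2^2=\langle\mathcal{L}u_t,u_t\rangle=-\alpha\Vert u_t\Vert_2^2+\langle T_W(Gu_t),u_t\rangle.
\end{equation*}
A Cauchy--Schwarz inequality combined with $\Vert Gu_t\Vert_2\leq\Vert\partial_xF\Vert_\infty\Vert u_t\Vert_2$ then gives
\begin{equation*}
\langle T_W(Gu_t),u_t\rangle\leq\Vert T_W\Vert_{L^2\to L^2}\,\Vert\partial_xF\Vert_\infty\,\Vert u_t\Vert_2^2.
\end{equation*}

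The main obstacle will be to identify the $L^2$-operator norm of $T_W$ with the spectral radius $r_\infty$. Proposition~\ref{prop:proprietes_TW} already provides the equality $r_2(T_W)=r_\infty(T_W)=r_\infty$; the missing ingredient is that, for the positive self-adjoint compact operator $T_W$ on $L^2(I)$, the operator norm coincides with this spectral radius. Granted this identification, the bound on the cross-term together with the subcritical hypothesis \eqref{eq:def_sub_exp} yields
\begin{equation*}
\frac{d}{dt}\Vert u_t\Vert_2^2\leq-2\bigl(\alpha-r_\infty\Vert\partial_xF\Vert_\infty\bigr)\Vert u_t\Vert_2^2=-2\gamma\Vert u_t\Vert_2^2,
\end{equation*}
and Gronwall's inequality produces $\Vert u_t\Vert_2\leq e^{-\gamma t}\Vert g\Vert_2$, which is exactly the claimed contraction estimate.
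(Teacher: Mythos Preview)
Your energy-estimate approach is clean, but it rests on an assumption that the paper does not make: you assert that $T_W$ is a ``positive self-adjoint compact operator'' in order to identify $\Vert T_W\Vert_{L^2\to L^2}$ with $r_2(T_W)=r_\infty$. Self-adjointness of $T_W$ requires $W(x,y)=W(y,x)$, and the paper explicitly allows non-symmetric kernels (see the remark in Section~2.3: ``we do not suppose any symmetry on $W$''; the Expected Degree Distribution example $W(x,y)=f(x)g(y)$ is asymmetric unless $f=g$). For a non-normal compact operator one has only $r_2(T_W)\leq\Vert T_W\Vert_{L^2\to L^2}$, and the gap can be arbitrary. Your differential inequality therefore delivers only
\[
\frac{d}{dt}\Vert u_t\Vert_2^2\leq -2\bigl(\alpha-\Vert T_W\Vert_{L^2\to L^2}\Vert\partial_xF\Vert_\infty\bigr)\Vert u_t\Vert_2^2,
\]
and the constant in parentheses may well be negative even when $\gamma=\alpha-r_\infty\Vert\partial_xF\Vert_\infty>0$.

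The paper avoids this obstruction by working spectrally rather than via dissipativity. It writes $\mathcal{L}=-\alpha I+\mathcal{U}$ with $\mathcal{U}g=T_W(Gg)$, observes that $\mathcal{U}$ is compact so that $\sigma(\mathcal{L})$ consists of $-\alpha$ together with shifted eigenvalues $-\alpha+\mu$ with $|\mu|\leq r_2(\mathcal{U})$, and then uses Lemma~\ref{lem:op_radius} (which needs no symmetry) to get $r_2(\mathcal{U})=r_\infty(\mathcal{U})\leq r_\infty(T_W)\Vert G\Vert_\infty$. This pins $\mathrm{Re}\,\sigma(\mathcal{L})\leq-\gamma$, and the semigroup decay is then read off from standard functional-analytic results (the paper cites Pazy). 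If you want to keep the energy route, you would need an additional symmetry hypothesis on $W$, under which your argument is indeed more elementary; otherwise you must pass through the spectral-radius machinery.
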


\subsubsection{Long-term stability of the microscopic spatial profile}

From now on, we place ourselves in the exponential case \eqref{eq:def_sub_exp}. We first state a convergence result of $X_N$ towards the macroscopic $X$ on a bounded time interval $[0, T]$.
\begin{prop}\label{prop:finite_time}
Let $T>0$.  Under Hypotheses \ref{hyp_globales}, \ref{hyp:subcritical} and   \ref{hyp:scenarios}, for any $\varepsilon>0$,  $ \mathbb{ P}$-a.s.
\begin{equation}\label{eq:finite_time}
\mathbf{P}\left( \sup_{t\in [0,T]}\left\Vert X_N(t)-X_t \right\Vert_2\geq \varepsilon\right) \xrightarrow[N\to\infty]{}0.
\end{equation}
\end{prop}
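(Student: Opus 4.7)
The plan is to adapt the standard law-of-large-numbers argument of \cite{agathenerine2021multivariate} to the present setting on the bounded time horizon $[0,T]$. The first step is to introduce, on the same Poisson measures $\pi_i$, the ``decoupled'' limit processes
\[
\bar Z_i(t)=\int_0^t \int_0^\infty \mathbf{1}_{\{z\leq \lambda_s(x_i)\}}\pi_i(ds,dz),
\]
with $\lambda_s(x_i)$ from \eqref{eq:def_lambdabarre}. The coupling inequality $\mathbf{E}|Z_{N,i}(t)-\bar Z_i(t)|\leq \int_0^t \mathbf{E}|\lambda_{N,i}(s)-\lambda_s(x_i)|\,ds$ together with the Lipschitz property of $F$ reduces the comparison of the two intensities to a comparison of $X_{N,i}$ with $X_s(x_i)$.

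The central step is the decomposition
\[
X_{N,i}(t)-X_t(x_i)= A_{N,i}(t)+B_{N,i}(t)+C_{N,i}(t)+D_{N,i}(t),
\]
where $A_{N,i}(t)=\sum_j \frac{w_{ij}^{(N)}}{N}\int_0^t h(t-s)\, d(Z_{N,j}-\bar Z_j)(s)$ is the dynamical coupling term; $B_{N,i}(t)=\sum_j \frac{w_{ij}^{(N)}-W(x_i,x_j)}{N}\int_0^t h(t-s)\, d\bar Z_j(s)$ is the graph fluctuation; $C_{N,i}(t)=\sum_j \frac{W(x_i,x_j)}{N}\int_0^t h(t-s)\,(d\bar Z_j(s)-\lambda_s(x_j)\,ds)$ is the Poisson-martingale term; and $D_{N,i}(t)$ is the Riemann error between $\frac{1}{N}\sum_j W(x_i,x_j)\int_0^t h(t-s)\lambda_s(x_j)\,ds$ and $\int_I W(x_i,y)\int_0^t h(t-s)\lambda_s(y)\,ds\,dy$.

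Each of $B$, $C$ and $D$ is then estimated independently of $X_{N,i}$. The Riemann term $D$ vanishes uniformly thanks to the uniform continuity of $W$ on $I^2$ and of $\lambda$ on $[0,T]\times I$ (from Proposition~\ref{prop:exis_lambda_barre}). The Poisson martingale $C$ is of size $O(N^{-1/2})$ by Doob's $L^2$ inequality, using the boundedness of $\lambda$ on $[0,T]\times I$. The graph fluctuation $B$ is the main obstacle: conditioning on the $\bar Z_j$'s and using the mutual independence of the $\xi_{ij}^{(N)}$ under $\mathbb{P}$ together with $\mathrm{Var}(\xi_{ij}^{(N)}/\rho_N)\leq \rho_N^{-1}$, one obtains an $L^2(\mathbb{P}\otimes\mathbf{P})$ bound of order $(N\rho_N)^{-1}$ times the second moment of $\int_0^T h(T-s)\,d\bar Z_j(s)$. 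In the bounded-$F$ case this second moment is uniformly controlled and yields the weaker dilution \eqref{eq:dilution_Fbounded}; in the general (possibly linear) case one has to use the subcriticality assumption \eqref{eq:def_sub_exp} to control the second moment of $\bar Z_j$ on $[0,T]$, and this growth is what forces the stronger scaling \eqref{eq:dilution}.

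The term $A_{N,i}$ closes the loop: combining the coupling inequality with the Lipschitz estimate on $F$ and the three bounds above, one obtains
\[
\mathbf{E}\|X_N(t)-X_t\|_2^2 \leq \varepsilon_N + C_T \int_0^t h(t-s)\, \mathbf{E}\|X_N(s)-X_s\|_2^2\, ds,
\]
where $\varepsilon_N$ tends to $0$ under Hypothesis~\ref{hyp:scenarios} (along realizations of the graph, after a separate $\mathbb{P}$-concentration estimate on the error coming from $B$). A generalized Gronwall argument with the kernel $h$ (or, alternatively, a direct iteration using $\|h\|_1<\infty$) closes the estimate on $[0,T]$, and Markov's inequality yields convergence in $\mathbf{P}$-probability for $\mathbb{P}$-almost every realization of the graph via a Fubini and Borel--Cantelli argument along $N$. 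The delicate point, as emphasized above, is to keep track of the right second-moment bounds in the non-bounded setting in order to justify the calibration of Hypothesis~\ref{hyp:scenarios}.
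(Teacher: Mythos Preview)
Your route via coupling with the auxiliary Poisson processes $\bar Z_i$ is different from the paper's. The paper does not introduce $\bar Z_i$ at all: exploiting the exponential kernel, it writes the semimartingale decomposition $d\widehat Y_N(t) = -\alpha\widehat Y_N(t)\,dt + dM_N(t) + \widehat r_N(t)\,dt$ for $\widehat Y_N:=X_N-X$, hence the mild form $\widehat Y_N(t)=\widehat\phi_N(t)+\widehat\zeta_N(t)$ with $\widehat\zeta_N(t)=\int_0^t e^{-\alpha(t-s)}dM_N(s)$. It then controls $\mathbf E\bigl[\sup_{s\le T}\|\widehat\zeta_N(s)\|_2\bigr]$ by the It\^o-formula argument of Proposition~\ref{prop:noise_perturbation}, and bounds $\|\widehat\phi_N(t)\|_2$ \emph{pathwise} by $C\int_0^t e^{-\alpha(t-s)}\|\widehat Y_N(s)\|_2\,ds+\widehat G_N$ through the same $\Theta_{s,i,k}$ decomposition as in Section~\ref{S:proof_drift}. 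Gr\"onwall applied pathwise then gives $\sup_{t\le T}\|\widehat Y_N(t)\|_2\le C\bigl(\widehat G_N+\sup_{t\le T}\|\widehat\zeta_N(t)\|_2\bigr)$, and only after this does one take $\mathbf E$.

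As written, your argument has a gap exactly at this step: your Gr\"onwall loop closes $\sup_{t\le T}\mathbf E\|X_N(t)-X_t\|_2^2$, not $\mathbf E\bigl[\sup_{t\le T}\|X_N(t)-X_t\|_2^2\bigr]$, and it is the latter that is needed for \eqref{eq:finite_time}. The interchange is not free, because your martingale and graph-fluctuation terms $B,C$ sit inside the recursion rather than being separated out beforehand. The paper's device of isolating the noise term $\widehat\zeta_N$ and bounding its supremum directly is precisely what makes the pathwise Gr\"onwall work; your coupling decomposition does not provide an analogous object whose $\sup_t$ you control first.

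A secondary point: your explanation of why the unbounded-$F$ case forces the stronger dilution \eqref{eq:dilution} is not correct. The second moments of $\bar Z_j$ on $[0,T]$ are bounded regardless of subcriticality, since $\lambda_s(x_j)$ is bounded by Proposition~\ref{prop:exis_lambda_barre}. The real obstruction is obtaining the \emph{quenched} $\mathbb P$-a.s.\ concentration of $\sum_j\overline{\xi_{ij}}\,F(X_{N,j}(s),\eta_s(x_j))$: when $F$ is bounded one can apply Grothendieck's inequality (Theorem~\ref{thm:grothendieck}) and get by with \eqref{eq:dilution_Fbounded}; when $F$ is unbounded one must recentre around $F(X_s(x_j),\eta_s(x_j))$ and use a cruder Azuma--Hoeffding estimate on a martingale with $O(k^2)$ increments, which is what produces \eqref{eq:dilution}. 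Hypothesis~\ref{hyp:subcritical} plays no role in this finite-time bound.
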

Note that Proposition~\ref{prop:finite_time} slightly generalises \cite[Prop. 3.17]{agathenerine2021multivariate} (see also \cite[Cor.~2]{CHEVALLIER20191} for a similar result) where it is proven that $\mathbf{E}\left[ \int_0^T\int_I \left\vert X_N(t)(x)-X_t(x)\right\vert dx~ dt\right]\xrightarrow[N\to\infty]{}0$ for any $T>0$. Here, we are more precise as we show uniform convergence of $X_N(t)$ in $L^2(I)$ instead of $L^1(I)$.

We are now in position to state the main result of the paper: the proximity stated in Proposition~\ref{prop:finite_time} is not only valid on a bounded time interval, but propagates to arbitrary polynomial times in $N \rho_N$.

\begin{thm}\label{thm:long_time} 
Choose some $t_{f}>0$ and $m\geq 1$. Then, under Hypotheses \ref{hyp_globales}, \ref{hyp:scenarios} and  \ref{hyp:subcritical}, $ \mathbb{ P}$-a.s. for $ \varepsilon>0$ small enough, 
\begin{equation}\label{eq:long_time_pol}
\mathbf{ P} \left( \sup_{ t\in \left[ t_{ \varepsilon}, (N \rho_{ N})^{ m} t_{ f}\right]} \left\Vert X_{ N}(t) - X_{ \infty}\right\Vert_2\geq \varepsilon\right) \xrightarrow[ N\to\infty]{}0.
\end{equation}
for some $ t_{\varepsilon}>0$ independent on $N$.
\end{thm}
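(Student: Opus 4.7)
The overall strategy is a classical \emph{entrance time plus stability} scheme: first use the finite-time propagation of chaos to bring $X_N$ into a small $L^2(I)$-neighborhood of $X_\infty$ in bounded time, then exploit the Markov structure in the exponential case and the spectral gap $\gamma$ of Proposition~\ref{prop:operateur_L} to keep it there for polynomial time. For the initialization, Theorem~\ref{thm:large_time_cvg_u_t}(ii) provides a deterministic $t_\varepsilon>0$ with $\|X_{t_\varepsilon}-X_\infty\|_\infty\leq \varepsilon/4$; Proposition~\ref{prop:finite_time} applied on $[0,t_\varepsilon]$ then gives, $\mathbb{P}$-a.s., $\mathbf{P}(\|X_N(t_\varepsilon)-X_\infty\|_2\geq \varepsilon/2)\to 0$. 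The remaining task is to propagate this control to the polynomial horizon $T_N:=(N\rho_N)^m t_f$.

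In the exponential case, $X_N$ is Markovian with $dX_{N,i}(t)=-\alpha X_{N,i}(t)\,dt+ \frac{1}{N\rho_N}\sum_j\xi_{ij}^{(N)}\,dZ_{N,j}(t)$. Writing $Y_N(t):=X_N(t)-X_\infty$ as a piecewise constant element of $L^2(I)$, decomposing $dZ_{N,j}=\lambda_{N,j}\,dt+d\mathcal{M}_{N,j}$, Taylor-expanding $F(X_{N,j}(t),\eta_t(x_j))$ around $(X_\infty(x_j),\eta_\infty(x_j))$, and using the fixed-point identity $\alpha X_\infty = T_W F(X_\infty,\eta_\infty)$ from \eqref{eq:def_u_infty}, the dynamics becomes
\begin{equation*}
dY_N(t)\,=\,\mathcal{L} Y_N(t)\,dt\,+\,R_N(t)\,dt\,+\,dM_N(t),
\end{equation*}
where $\mathcal{L}$ is the operator from \eqref{eq:def_operator_L}, the remainder $R_N$ gathers (i) the quadratic Taylor remainder $O(\|Y_N\|_\infty |Y_N|)$, (ii) the graph-discretization error between the empirical kernel $\frac{1}{N\rho_N}\xi^{(N)}$ and $T_W$ acting on $F(X_\infty,\eta_\infty)+ G Y_N$, (iii) the contribution of $\eta_t-\eta_\infty$, controlled by $\delta_t\to 0$; and $M_N$ is a jump martingale whose predictable quadratic variation grows at rate $O((N\rho_N)^{-1})$ as long as $X_N$ stays bounded.

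Introduce the stopping time $\tau_N:=\inf\{t\geq t_\varepsilon:\|Y_N(t)\|_2\geq\varepsilon\}$. By Duhamel and the contraction \eqref{eq:contraction_sg}, for $t\in[t_\varepsilon,T_N\wedge \tau_N]$,
\begin{equation*}
\|Y_N(t)\|_2\leq e^{-\gamma(t-t_\varepsilon)}\tfrac{\varepsilon}{2}+\int_{t_\varepsilon}^t e^{-\gamma(t-s)}\|R_N(s)\|_2\,ds+\|\mathcal{I}_N(t)\|_2,
\end{equation*}
with $\mathcal{I}_N(t):=\int_{t_\varepsilon}^t e^{(t-s)\mathcal{L}}\,dM_N(s)$. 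On $\{\tau_N>t\}$, $\|R_N(s)\|_2\leq C\varepsilon \|Y_N(s)\|_2+o_N(1)$, where the $o_N(1)$ is the (quenched, $\mathbb{P}$-a.s.) graph and spatial discretization error. Choosing $\varepsilon$ small enough so that $C\varepsilon<\gamma$, a Gronwall argument shows that $\{\tau_N\leq T_N\}\subset \{\sup_{t_\varepsilon\leq s\leq T_N}\|\mathcal{I}_N(s)\|_2\geq c\varepsilon\}$ for some $c>0$; Itô's isometry then gives $\mathbf{E}\|\mathcal{I}_N(t)\|_2^2\lesssim (N\rho_N)^{-1}$ \emph{uniformly in $t$}, the spectral gap absorbing the time dependence.

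\textbf{Main obstacle.} Promoting this pointwise variance bound on $\mathcal{I}_N$ to a supremum bound over the polynomial window $[t_\varepsilon,T_N]$ is the delicate step: a direct union bound over unit slices only yields $T_N/(N\rho_N)=(N\rho_N)^{m-1}$, which is useless for $m\geq 1$. One has to exploit the jump structure of $M_N$ (jumps of size $O((N\rho_N)^{-1})$, bracket of the same order when $\tau_N>t$) through a Bernstein-type inequality yielding sub-exponential tails $\exp(-cN\rho_N\varepsilon^2)$, which absorb any polynomial union bound. The second delicate point is the $\mathbb{P}$-a.s.\ quenched control of the graph part of $R_N$, uniformly in time and in $\|Y_N\|_2\leq \varepsilon$: this is exactly where the reinforced dilution condition \eqref{eq:dilution}, or its milder counterpart \eqref{eq:dilution_Fbounded} when $F$ is bounded, is used, via concentration of $\frac{1}{N\rho_N}\sum_j (\xi^{(N)}_{ij}-W(x_i,x_j)\rho_N)\varphi(x_j)$ viewed as a random operator on $L^2(I)$.
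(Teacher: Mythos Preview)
Your setup — the mild/Duhamel decomposition of $Y_N$, the identification of the linear part $\mathcal{L}$ with spectral gap $\gamma$, the splitting of the remainder into quadratic Taylor, graph-discretization and $\delta_t$ contributions, and the initialization via Theorem~\ref{thm:large_time_cvg_u_t} plus Proposition~\ref{prop:finite_time} — matches the paper. The linearization $\|R_N(s)\|_2\leq C\varepsilon\|Y_N(s)\|_2+o_N(1)$ on the pre-exit event is also fine (the paper keeps the quadratic form $\|Y_N\|_2^2$ and uses a quadratic Gr\"onwall lemma, but on $\{\|Y_N\|_2\leq\varepsilon\}$ these are equivalent).

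Where you diverge from the paper is precisely at the point you flag as the main obstacle. You propose to control $\sup_{t\in[t_\varepsilon,T_N]}\|\mathcal{I}_N(t)\|_2$ in one shot via a Bernstein-type exponential inequality. The paper does \emph{not} do this. Instead, it discretizes $[t_\varepsilon,T_N]$ into $a_N=\lceil(N\rho_N)^m\rceil$ intervals of \emph{fixed} length $T$ (chosen so that $e^{-\gamma T}<1/3$), and uses the Markov property of $X_N$ to write the good event as an iterated conditional probability, yielding a product $\geq \mathbf{P}(E(t_\varepsilon,t_\varepsilon+T)\mid A_1^N)^{a_N}$. On each interval the stochastic convolution is \emph{restarted}, so one only needs the moment bound of Proposition~\ref{prop:noise_perturbation} on a bounded time window: $\mathbf{E}[\sup_{s\leq T}\|\zeta_N\|_2^{2m'}]\leq C(N\rho_N)^{-m'}$. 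Taking $m'>m$ and applying Markov's inequality gives $(1-C(N\rho_N)^{-m'})^{a_N}\to 1$. No exponential tails are needed; arbitrarily high polynomial moments suffice.

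Your Bernstein route is not obviously wrong, but it is not a free move either: $\mathcal{I}_N(t)=\int_{t_\varepsilon}^t e^{(t-s)\mathcal{L}}dM_N(s)$ is a stochastic convolution, not a martingale in $t$, so standard Bernstein/Freedman inequalities do not apply directly; and the jump intensity $\lambda_{N,j}$ is state-dependent, so the required uniform bracket bound is only available after localizing by $\tau_N$, which introduces a circularity you would have to unwind carefully. The paper's discretize-and-restart argument sidesteps both issues and is the missing idea in your sketch.
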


Since $F$ is Lipschitz and $\lambda_{N,i}(t)= F(X_{N,i}(t-), \eta_t(x_i))$ by \eqref{eq:def_lambdaiN_intro}, it is straightforward to derive from Theorem~\ref{thm:long_time} a similar result for the profile of densities
\begin{equation}\label{eq:def_lambdaN}
\lambda_{N}(t)(x):=\sum_{i=1}^N \lambda_{N,i}(t) \mathbf{1}_{x\in\left(\frac{i-1}{N}, \frac{i}{N}\right]},\ x\in I.
\end{equation}

\begin{cor}\label{cor:long_time_lambda}
Recall the definition of $\ell$ in \eqref{eq:def_l_lim}. Under the same set of hypotheses of Theorem \ref{thm:long_time} and with the same notation,
\begin{equation}\label{eq:long_time_pol_lambda}
\mathbf{ P} \left( \sup_{ t\in \left[ t_{ \varepsilon}, (N \rho_{ N})^{ m} t_{ f}\right]} \left\Vert \lambda_{N}(t) - \ell\right\Vert_2\geq \varepsilon\right) \xrightarrow[ N\to\infty]{}0.
\end{equation}
\end{cor}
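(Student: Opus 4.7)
The strategy is to reduce Corollary \ref{cor:long_time_lambda} to Theorem \ref{thm:long_time} using the Lipschitz continuity of $F$. On each slab $B_{N,i}$, $\lambda_N(t)(x)= F(X_{N,i}(t-), \eta_t(x_i))$ by \eqref{eq:def_lambdaiN_intro} and \eqref{eq:def_lambdaN}, while by Remark \ref{rem:correspondance_Xt_ell}, $\ell(x)= F(X_\infty(x), \eta_\infty(x))$. Thus the $L^2$-distance between $\lambda_N(t)$ and $\ell$ should be essentially controlled by the $L^2$-distance between $X_N(t)$ and $X_\infty$, up to a deterministic discretization error and a term quantifying the convergence $\eta_t\to \eta_\infty$.

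Concretely, I would first introduce the piecewise constant profiles $\widetilde{X}_\infty(x):= X_\infty(x_i)$ and $\widetilde{\ell}(x):= F(X_\infty(x_i),\eta_\infty(x_i))$ for $x\in B_{N,i}$, and then estimate the difference $\lambda_N(t)-\ell$ via the triangle inequality in two pieces: $\lambda_N(t) - \widetilde{\ell}$ (random) and $\widetilde{\ell}-\ell$ (deterministic discretization error). For the second piece, uniform continuity on the compact $I$ of the continuous function $\ell = F(X_\infty, \eta_\infty)$ (which is continuous since $X_\infty$ is continuous by Theorem \ref{thm:large_time_cvg_u_t} and $\eta_\infty$ is Lipschitz by Hypothesis \ref{hyp_globales}) provides a deterministic bound $\Vert \widetilde\ell - \ell\Vert_\infty \leq \omega_N$ with $\omega_N\xrightarrow[N\to\infty]{}0$. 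For the first piece, the Lipschitz bound
$$\vert \lambda_N(t)(x) - \widetilde{\ell}(x)\vert \leq \Vert F \Vert_L \bigl( \vert X_{N,i}(t-) - X_\infty(x_i)\vert + \vert \eta_t(x_i) - \eta_\infty(x_i)\vert\bigr), \qquad x\in B_{N,i},$$
gives after squaring, integrating and summing in $i$ an estimate of the form
$$\Vert \lambda_N(t) - \widetilde{\ell}\Vert_2 \leq C_F \bigl( \Vert X_N(t) - \widetilde{X}_\infty\Vert_2 + \delta_t\bigr),$$
where $\delta_t$ is defined in \eqref{eq:def_delta_s}. Since $\Vert X_\infty - \widetilde{X}_\infty\Vert_2 \to 0$ by continuity of $X_\infty$, we obtain a global bound
$$\Vert \lambda_N(t) - \ell\Vert_2 \leq C_F \Vert X_N(t) - X_\infty\Vert_2 + C_F \delta_t + \varepsilon_N,$$
with some deterministic $\varepsilon_N\to 0$ independent of $t$ and of the randomness.

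To conclude, given $\varepsilon>0$ small enough, I would take $N$ large so that $\varepsilon_N \leq \varepsilon$, and possibly enlarge $t_\varepsilon$ so that $\delta_t\leq \varepsilon$ for every $t\geq t_\varepsilon$ (allowed by \eqref{eq:def_delta_s}). Then the event in \eqref{eq:long_time_pol_lambda} (with a constant $C_F+2$ times $\varepsilon$ on the right-hand side, absorbed by renaming $\varepsilon$) is contained in the corresponding event of Theorem \ref{thm:long_time}, whose probability vanishes as $N\to\infty$. Since $\varepsilon$ is arbitrary, \eqref{eq:long_time_pol_lambda} follows.

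I do not anticipate any real obstacle here: the dynamical content has been handled in Theorem \ref{thm:long_time}, and all that is needed is a Lipschitz-continuity argument combined with control of the discretization errors produced by representing the continuous limits $X_\infty$ and $\ell$ on the grid $(x_i)_{1\leq i\leq N}$. The only minor technicality to track is that the time threshold $t_\varepsilon$ provided by Theorem \ref{thm:long_time} may need to be enlarged so that $\delta_t$ is also small; this is harmless because $\delta_t\xrightarrow[t\to\infty]{}0$ by Hypothesis \ref{hyp_globales}.
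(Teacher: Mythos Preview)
Your proposal is correct and follows exactly the approach the paper indicates: the paper does not give a detailed proof of this corollary but simply remarks, just before stating it, that ``since $F$ is Lipschitz and $\lambda_{N,i}(t)= F(X_{N,i}(t-), \eta_t(x_i))$, it is straightforward to derive from Theorem~\ref{thm:long_time} a similar result''. Your argument (Lipschitz bound plus control of the discretization error $\Vert \widetilde X_\infty - X_\infty\Vert_2$, $\Vert \widetilde\ell - \ell\Vert_2$ via continuity of $X_\infty$ and $\eta_\infty$, and of $\delta_t$ via \eqref{eq:def_delta_s}) is precisely the natural way to make this remark rigorous, in the spirit of Lemma~\ref{lem:tilde_YN_control}.
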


\subsection{Examples and extensions}
We give here some illustrating examples of our main results.

\subsubsection{Mean-field framework} To the best of the knowledge of the author, already in the simple homogeneous case of mean-field interaction, there exists no long-term stability result such as Theorem~\ref{thm:long_time}. We stress that our result may have an interest of its own in this case. Let us be more specific. When $\rho_N=W_N=1$ and $\mu_t(x)=\mu\geq 0$, the process introduced in Definition \ref{def:H2} reduces to the usual mean-field framework \cite{delattre2016}:
\begin{equation}\label{eq:def_ZiN_CM}
Z_{N,i}(t) = \int_0^t \int_0^\infty \mathbf{1}_{\{z\leq \lambda_{N}(s)\}} \pi_i(ds,dz)
\end{equation}
with $\lambda_{N}(t)$ defined by
\begin{equation}\label{eq:def_lambdaiN_intro_CM}
\lambda_{N}(t)= F(X_{N}(t-), \eta),
\end{equation}
where
\begin{equation}\label{eq:def_UiN_CM}
X_{N}(t)=\sum_{j=1}^N \dfrac{1}{N}\int_0^{t} h(t-s) dZ_{N,j}(s),
\end{equation}
In this simple case, the spatial framework is no longer useful (in particular the spatial profile defined in \eqref{eq:def_UN} is constant in $x$ so that the $L^2$ framework is not relevant, one has only to work in $\mathbb{R}$). The macroscopic intensity and synaptic current (respectively \eqref{eq:def_lambdabarre} and \eqref{eq:def_utx} become
\begin{equation}\label{eq:def_macro_CM}
X_t:=\int_0^t h(t-s)\lambda_sds,\quad \lambda_t:=F(X_t,\eta).
\end{equation}
The main results of the paper translate then into
\begin{thm}\label{thm:CM}
Under Hypothesis \ref{hyp_globales} and when $\left\Vert \partial_x F \right\Vert_\infty \Vert h \Vert _1 < 1$, 
there exists a unique $X_\infty\in \mathbb{R}_+$ solution to
$X_\infty=\Vert h \Vert_1 F\left(X_\infty,\eta\right)$, and
 $\left(X_t\right)_{t\geq 0}$ converges when $t\to\infty$ towards $X_\infty$. Respectively, $(\lambda_t)_{t\geq 0}$ converges towards $\ell$, the unique solution to $\ell=F\left(\Vert h \Vert_1 \ell,\eta\right)$. 
Moreover, under the same hypotheses, in the exponential case \eqref{eq:def_exponential},
for any $t_{f}>0$ and $m\geq 1$, $ \mathbb{ P}$-a.s. for $ \varepsilon>0$ small enough, 
$\displaystyle \mathbf{ P} \left( \sup_{ t\in \left[ t_{ \varepsilon}, N ^{ m} t_{ f}\right]} \left\vert X_{ N}(t) - X_{ \infty}\right\vert\geq \varepsilon\right)$ and $\mathbf{ P} \left( \sup_{ t\in \left[ t_{ \varepsilon}, N ^{ m} t_{ f}\right]} \left\vert \lambda_{ N}(t) - \ell\right\vert\geq \varepsilon\right)$ tend to $0$ as $N\to\infty$ for some $ t_{\varepsilon}>0$ independent on $N$. 
\end{thm}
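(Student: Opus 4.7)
The plan is to deduce Theorem \ref{thm:CM} as a direct specialization of Theorem \ref{thm:large_time_cvg_u_t} and Theorem \ref{thm:long_time} to the simplified setting $W\equiv 1$, $\rho_N\equiv 1$, $\eta_t(x)\equiv \eta$. The whole proof consists of checking that the general hypotheses are satisfied and that the spatial quantities collapse to scalar ones.

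I would start by identifying the operator $T_W$: when $W\equiv 1$, one has $T_W g = \int_0^1 g(y)\, dy$, a rank-one operator on $L^2(I)$ with range the constants, spectrum $\{0,1\}$, and therefore $r_\infty = r_2 = 1$ by Proposition \ref{prop:proprietes_TW}. The subcritical condition \eqref{eq:def_subcritical} then reduces to $\Vert \partial_x F\Vert_\infty \Vert h\Vert_1 <1$, which matches the hypothesis of Theorem~\ref{thm:CM}. Taking $\eta_\infty:=\eta$ yields $\delta_t\equiv 0$, so Hypothesis~\ref{hyp_globales} is satisfied. Hypothesis~\ref{hyp:scenarios} is trivial since $\rho_N=1$.

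For the asymptotic part, I would apply Theorem~\ref{thm:large_time_cvg_u_t}. The fixed-point equation \eqref{eq:def_u_infty} becomes $X_\infty(x) = \Vert h\Vert_1 \int_0^1 F(X_\infty(y),\eta)\, dy$; since the right-hand side is constant in $x$, $X_\infty$ is a real constant satisfying the scalar equation $X_\infty = \Vert h\Vert_1 F(X_\infty,\eta)$. Uniqueness in $\mathbb{R}$ is immediate because the map $x\mapsto \Vert h\Vert_1 F(x,\eta)$ is a contraction of ratio $\Vert h\Vert_1 \Vert \partial_x F\Vert_\infty <1$. The uniform convergence $X_t\to X_\infty$ on $I$ from part~(ii) of Theorem~\ref{thm:large_time_cvg_u_t} is in fact a scalar convergence since by \eqref{eq:def_macro_CM} $X_t$ is already a real number. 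Convergence $\lambda_t\to \ell$ follows from Remark~\ref{rem:correspondance_Xt_ell} with $\ell= F(\Vert h\Vert_1 \ell,\eta)$.

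For the long-time stability part, I would invoke Theorem~\ref{thm:long_time} together with Corollary~\ref{cor:long_time_lambda}. The key observation is that in the mean-field setting $X_{N,i}(t)$ defined by \eqref{eq:def_UiN_CM} does not depend on $i$, so the microscopic profile $X_N(t)(x)$ in \eqref{eq:def_UN} is constant in $x$; since $\vert I\vert=1$, this gives the identification $\Vert X_N(t) - X_\infty\Vert_2 = \vert X_N(t) - X_\infty\vert$. With $\rho_N=1$ the polynomial horizon $[t_\varepsilon,(N\rho_N)^m t_f]$ becomes $[t_\varepsilon,N^m t_f]$, and the $\mathbb{P}$-a.s.\ qualifier is automatic since all the Bernoulli variables $\xi_{ij}^{(N)}$ of parameter $1$ are a.s.\ equal to $1$ (the graph is deterministically the complete graph). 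The smallness condition on $\varepsilon$ is inherited from Theorem~\ref{thm:long_time}. The conclusion for $\lambda_N$ is obtained analogously from Corollary~\ref{cor:long_time_lambda}.

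Since the statement is by construction a particular case of the main theorems, there is no real obstacle beyond these identifications; the purpose of presenting Theorem~\ref{thm:CM} separately is to underline that, even in the classical homogeneous mean-field Hawkes framework of \cite{delattre2016}, the long-term control up to polynomial-in-$N$ horizons appears to be new.
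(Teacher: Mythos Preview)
Your proposal is correct and matches the paper's own treatment: Theorem~\ref{thm:CM} is presented in the paper not with a separate proof but as the direct translation of Theorems~\ref{thm:large_time_cvg_u_t} and~\ref{thm:long_time} (and Corollary~\ref{cor:long_time_lambda}) to the homogeneous mean-field case $W\equiv 1$, $\rho_N\equiv 1$, $\eta_t\equiv\eta$. The identifications you make (in particular $r_\infty=1$, the collapse of $X_\infty$, $X_t$, $X_N(t)$ to scalars, and $\Vert X_N(t)-X_\infty\Vert_2=\vert X_N(t)-X_\infty\vert$) are exactly the ones needed.
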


\begin{remark}\label{rem:result_CM}
The previous result applies in particular to the linear case where $\eta= \mu$ and $F(x,\eta)=\mu+x$. We have then that $\ell=\dfrac{\mu}{1-\Vert h \Vert_1}$ in this case, as in \cite{delattre2016}. 
\end{remark}

\subsubsection{Erd\H{o}s-R\'enyi graphs} An immediate extension of the last mean-field case concerns the case of homogeneous Erd\H{o}s-R\'enyi graphs: choose $W_N(x,y)=\rho_N$ for all $x,y\in I$. The results of our paper are valid under the dilution Hypothesis \ref{hyp:scenarios}. It is however likely that these dilution conditions are not optimal (compare with the result of \cite{Coppini2022} with the condition $N\rho_N\to\infty$ in the diffusion case, but a difficulty here is that we deal with a multiplicative noise whereas it is essentially additive in \cite{Coppini2022}).

\subsubsection{Examples in the inhomogeneous case} As already mentionned in Hypothesis \ref{hyp_globales}, the results are valid for any $W$ continuous, interesting examples include $W(x,y)=1-\max(x,y)$, $W(x,y)= 1-xy$, see \cite{borgs2011,Borgs2018}. Note also that we do not suppose any symmetry on $W$. Another rich class of examples concerns the \emph{Expected Degree Distribution} model \cite{Chung2002,Ouadah2019} where $W(x,y)=f(x)g(y)$ for any continuous functions $f$ and $g$ on $I$. The specificity of such class is that we have an explicit formulation of $r_\infty$, that is $r_\infty= \int_I f(x)g(x)dx$ when $\int_I g =1$. In the linear case, we obtain an explicit formula for $\lambda_t$ in \cite[Example 4.3]{agathenerine2021multivariate}.

\subsubsection{Extensions}\label{S:extension}

It is apparent from the proofs below that one can weaken the hypothesis of continuity of $W$. Under the hypothesis that $W$ is bounded, Proposition \ref{prop:exis_lambda_barre} remains true when $\mathcal{C}_b([0,T]\times I)$ is replaced by $\mathcal{C}\left( [0,T],  L^\infty(I)\right)$ (continuity of $\lambda_t$ and $X_t$ in $x$ may not be satisfied). Supposing further that there exists a partition of $I$ into $p$ intervals $I=\sqcup_{k=1,\cdots,p} C_k$ such that for all $\epsilon>0$, there exists $\eta>0$ such that $\int_I \left\vert W(x,y)-W(x',y)\right\vert dy <\epsilon$ when $\vert x-x'\vert<\eta$ and $x,x'\in C_k$, then for every $k$, $\lambda_{\vert [0,T]\times\mathring{C_k}} $ and $X_{\vert [0,T]\times\mathring{C_k}}$ are both continuous. When $p=1$, both $\lambda$ and $X$ are continuous on $[0,T]\times I$.

Concerning Theorem \ref{thm:large_time_cvg_u_t}, defining for $k\in \{1,2\}$:
\begin{equation}\label{eq:def_Rnk}
R^W_{N,k}:=\dfrac{1}{N} \sum_{i,j=1}^N \int_{B_{N,j}} \left\vert W(x_i,x_j)-W(x_i,y)\right\vert^k dy,
\end{equation}
and
\begin{equation}\label{eq:def_Sn}
S^W_{N}:=\sum_{i=1}^N \int_{B_{N,i}} \left(\int_I \left\vert W(x_i,y) - W(x,y)\right\vert^2 dy\right)dx,
\end{equation}
Theorem \ref{thm:long_time} remains true when $R^W_{N,1},R^W_{N,2}, S^W_{N} \xrightarrow[N\to\infty]{}0$, see Lemmas \ref{lem:drift_term_phi_N1}, \ref{lem:drift_term_phi_N2} and \ref{lem:drift_term_phi_N3}.

These particular conditions are met in the following cases (details of the computation are left to the reader)
\begin{itemize}
\item  P-nearest neighbor model \cite{Omelchenko2012}: $W(x,y)=\mathbf{1}_{d_{\mathcal{S}_1}(x,y)< r}$ for any $(x,y)\in I^2$ for some fixed $r\in (0,\frac{1}{2})$, with
$d_{\mathcal{S}_1}(x,y)=\min(\vert x-y \vert,1-\vert x-y \vert)$.
\item Stochastic block model \cite{holland1983,Ditlevsen2017}: it corresponds to considering $p$ communities $(C_k)_{1\leq k \leq p}$. An element of the community $C_l$ communicates with an element of the community $C_k$ with probability $p_{kl}$. This corresponds to the choice of interaction kernel $W(x,y)=\sum_{k,l}p_{kl}\mathbf{1}_{x\in C_k, y\in C_l}$.
\end{itemize}


\subsection{Link with the literature}\label{S:litterature}

Several previous works have complemented the propagation of chaos result mentioned in \eqref{eq:chaos_generic} in various situations: Central Limit Theorems (CLT) have been obtained in \cite{delattre2016,Ditlevsen2017} for homogeneous mean-field Hawkes processes (when both time and $N$ go to infinity) or with age-dependence in \cite{Chevallier2017}. One should also mention the functional fluctuation result recently obtained in \cite{Heesen2021}, also in a pure mean-field setting. A result closer to our case with spatial extension is \cite{ChevallierOst2020}, where a functional CLT is obtained for the spatial profile $X_{ N}$ around its limit. Some insights of the necessity of considering stochastic versions of the NFE \eqref{eq:NFE} as second order approximations of the spatial profile are in particular given in \cite{ChevallierOst2020}. Note here that all of these works provide approximation results of quantities such that $ \lambda_{ N}$ or $X_{ N}$ that are either valid on a bounded time interval $[0, T]$ or under strict growth condition on $T$ (see in particular the condition $ \frac{ T}{ N} \to 0$ for the CLT in \cite{Ditlevsen2017}), whereas we are here concerned with time-scales that grow polynomially with $N$.
  
The analysis of mean-field interacting processes on long time scales has a significant history in the case of interacting diffusions. The important issue of uniform propagation of chaos has been especially studied mostly in reversible situations (see e.g. the case of granular media equation \cite{Bolley:2013}) but also more recently in some irreversible situations (see \cite{Colombani2022}). There has been in particular a growing interest in the long-time analysis of phase oscillators (see \cite{giacomin_poquet2015} and references therein for a comprehensive review on the subject). We do not aim here to be exhaustive, but as the techniques used in this work present some formal similarities, let us nonetheless comment on the analysis of the simplest situation, i.e. the Kuramoto model. One is here interested in the longtime behavior of the empirical measure $ \mu_{ N, t}:= \frac{ 1}{ N} \sum_{ i=1}^{ N} \delta_{ \theta_{ i, t}}$ of the system of interacting diffusions $(\theta_{ 1}, \ldots, \theta_{ N})$ solving the system of coupled SDEs $ {\rm d} \theta_{ i,t}= - \frac{ K}{ N} \sum_{ j=1}^{ N} \sin( \theta_{ i,t}- \theta_{ j,t}){\rm d} t + {\rm d}B_{ i, t}$. Standard propagation of chaos techniques show that $ \mu_{ N}$ converges weakly on a bounded time interval $[0, T]$ to the solution $ \mu_{ t}$ to the nonlinear Fokker-Planck (NFP) equation $\partial_t \mu_t\, =\, \frac{1}{2} \partial_{ \theta}^{ 2} \mu_t+K\partial_\theta \Big( \mu_t(\sin * \mu_t)\Big)$. The simplicity of the Kuramoto model lies in the fact that one can easily prove the existence of a phase transition for this model: when $K\leq 1$, $ \mu\equiv \frac{ 1}{ 2\pi}$ is the only (stable) stationary point of the previous NFP (subcritical case), whereas it coexists with a stable circle of synchronised profiles when $K>1$ (supercritical case). A series of papers have analysed the longtime behavior of the empirical measure $\mu_N$ of the Kuramoto model (and extensions) in both the subcritical and supercritical cases (see in particular \cite{bertini14,lucon_poquet2017,giacomin2012,Coppini2022} and references therein). The main arguments of the mentioned papers lie in a careful analysis of two contradictory phenomena that arise on a long-time scale: the stability of the deterministic dynamics around stationary points (that forces $ \mu_{ N}$ to remain in a small neighborhood of these points) and the presence of noise in the microscopic system (which makes $ \mu_{ N}$ diffuse around these points). In particular, the work that is somehow formally closest to the present article is \cite{Coppini2022}, where the long-time stability of $ \mu_{ N}$ is analysed in both sub and supercritical cases for Kuramoto oscillators interacting on an Erd\H{o}s-R\'enyi graph. We are here (at least formally) in a similar situation to the subcritical case of \cite{Coppini2022}: the deterministic dynamics of the spatial profile $X_{ N}$ (given by \eqref{eq:def_UN}) has a unique stationary point which possesses sufficient stability properties. The point of the analysis relies then on a time discretization and some careful control on the diffusive influence of noise that competes with the deterministic dynamics. The main difference (and present difficulty in the analysis) with the diffusion case in \cite{Coppini2022} is that our noise (Poissonnian rather than Brownian) is multiplicative (whereas it is essentially additive in \cite{Coppini2022}). This explains in particular the stronger dilution conditions that we require in Hypothesis~\ref{hyp:scenarios} (compared to the optimal $N \rho_{ N}\to \infty$ in \cite{Coppini2022}) and also the fact that we only reach polynomial time scales (compared to the sub-exponential scale in \cite{Coppini2022}). There is however every reason to believe that the stability result of Theorem~\ref{thm:long_time} would remain valid up to this sub-exponential time scale.

Note here that we deal directly with the control of the Poisson noise. Another possibility would have been to use some Brownian approximation of the dynamics of $X_{ N}$. Some results in this direction have been initiated in \cite{Ditlevsen2017} for spatially-extended Hawkes processes exhibiting oscillatory behaviors: some diffusive approximation of the dynamics of the (equivalent of) the spatial profile is provided (see \cite[Section~5]{Ditlevsen2017}). Note however that this approximation is based on the comparison of the corresponding semigroups and is not uniform in time. Hence, it is unclear how one could exploit these techniques for our case. Some stronger (pathwise) approximations between Hawkes and Brownian dynamics have been further proposed in \cite{chevallierMT2021}, based on Koml\'os, Major and Tusn\'ady (KMT) coupling techniques (\cite{ethier_kurtz1986}, see also \cite{Prodhomme_arxiv2020} for similar techniques applied to finite dimensional Markov chains). However, this approximation is again not uniform in time so that applying this coupling to our present case is unclear. Our proof is more direct and does not rely on such Brownian coupling. To the author’s knowledge, this is the first result on large time stability of Hawkes process (not mentioning the issue of the random graph of interaction, we believe that our results remain also relevant in the pure mean-field case, see Theorem~\ref{thm:CM}).


\subsection{Strategy of proof and organization of the paper}

Section~\ref{S:proof_det_ut} is devoted to prove the convergence result as $t\to\infty$ of Theorem \ref{thm:large_time_cvg_u_t}. This in particular requires some spectral estimates on the operator $\mathcal{L}$ defined in Proposition~\ref{prop:operateur_L} that are gathered in Section~\ref{S:proofs_operator_L}.

The main lines of proof for Theorem \ref{thm:long_time} are given in Section~\ref{S:proof_largetime}.The strategy of proof is sketched here:
\begin{enumerate}
\item The starting point of the analysis is a semimartingale decomposition of $Y_N:= X_N- X$, detailed in Section~\ref{S:mild_formulation}. The point is to decompose the dynamics of $Y_N$ in terms of, at first order, the linear dynamics \eqref{eq:sys_lin_Y_t} governing the behavior of the deterministic profile $X$, modulo some drift terms coming from the graph and its mean-field approximation, some noise term and finally some quadratic remaining error coming from the nonlinearity of $F$.
\item A careful control on each of these terms in the semimartingale expansion on a bounded time interval are given in the remaining of Section~\ref{S:mild_formulation}. The proof of these estimates are respectively given in Section~\ref{S:proof_NP} (for the noise term) and Section~\ref{S:proof_drift} (for the drift term).
\item The rest of Section~\ref{S:proof_largetime} is devoted to the proof of Theorem~\ref{thm:long_time}, see Section~\ref{S:proof_mainTh}. The first point is that for any given $\varepsilon>0$, one has to wait a deterministic time $t_\varepsilon>0$, so that the deterministic profile $X_t$ reaches an $\varepsilon$-neighborhood of $X_\infty$. It is easy to see from the spectral gap estimate \eqref{eq:contraction_sg} that this $t_\varepsilon$ is actually of order $\frac{-\log(\varepsilon)}{\gamma}$. Then, using Proposition~\ref{prop:finite_time}, the microscopic process $X_N$ is itself $\varepsilon$-close to $X_\infty$ with high-probability.
\item The previous argument is the starting point of an iterative procedure that works as follows: the point is to see that provided $X_N$ is initially close to $X_\infty$, it will remain close to $X_\infty$ on some $[0, T]$ for some sufficiently large deterministic $T>0$. The key argument is that on a bounded time interval, the deterministic linear dynamics dominates upon the contribution of the noise, so that one has only to wait some sufficiently large $T$ so that the deterministic dynamics prevails upon the other contributions.
\item The rest of the proof consists in an iterative procedure from the previous argument, taking advantage of the Markovian structure of the dynamics of $X_N$. The time horizon at which one can pursue this recursion is controlled by moment estimates on the noise, proven in Section~\ref{S:proof_NP}.
\end{enumerate}
The rest of the paper is organised as follows: Section~\ref{S:finite} collects the proofs for the finite time behavior of Proposition~\ref{prop:finite_time} whereas some technical estimates are gathered in the appendix.


\section{Asymptotic behavior of $(X_t)$}\label{S:proof_det_ut}
This section is related to the proof of Theorem~\ref{thm:large_time_cvg_u_t}.
\subsection{Estimates on the operator $\mathcal{L}$}
\label{S:proofs_operator_L}
\begin{proof}[Proof of Proposition \ref{prop:proprietes_TW}] The continuity and compactness of $T_W$ come from the boundedness of $W$. The structure of the spectrum of $T_W$ is a consequence of the spectral theorem for compact operators. The equality between the spectral radii is postponed to Lemma \ref{lem:op_radius} where a more general result is stated (see also Proposition 4.7 of \cite{agathenerine2021multivariate} for a similar result).
\end{proof}

\begin{proof}[Proof of Proposition  \ref{prop:operateur_L}]
Let us introduce the operator 
\begin{equation} \label{eq:def_operator_U}
  \begin{array}{rrcl}
\mathcal{U}:&    L^2 (I) & \longrightarrow &L^2 (I) \\
   & g & \longmapsto & \mathcal{U}(g)=T_W( Gg),
  \end{array}
\end{equation}
we have then $\mathcal{L}=-\alpha Id + \mathcal{U}$. By Hypothesis \ref{hyp_globales}, $G$ is bounded. Then, for any $g\in L^2(I)$ using Cauchy-Schwarz inequality, $\Vert\mathcal{U}(g)\Vert_2^2\leq \Vert W \Vert_2^2 \Vert G \Vert_\infty \Vert g \Vert_2^2$. The operator $\mathcal{U}$ is then compact and thus has a discrete spectrum. Moreover, $r_2(\mathcal{U})=r_\infty(\mathcal{U})$, see Lemma \ref{lem:op_radius}, and $r_\infty(\mathcal{U}) \leq r_\infty(T_W) \Vert G \Vert_\infty$ as for any $g\in L^\infty$ and $x\in I$, $\vert\mathcal{U} g(x)\vert \leq \Vert T_W \Vert_\infty \Vert Gg \Vert_\infty \leq \Vert T_W \Vert_\infty \Vert G \Vert_\infty \Vert g \Vert_\infty$. Then $\mathcal{L}$ also has a discrete spectrum, which is the same as $\mathcal{U}$ but shifted by $\alpha$. Since $r_2(\mathcal{U})=r_\infty(\mathcal{U})$ (see Lemma \ref{lem:op_radius}), for any $\mu\in \sigma(\mathcal{L})\setminus\{0\}$, $ \vert \mu + \alpha \vert \leq  r_\infty(\mathcal{U}) $ thus $Re(\mu)\leq -\alpha + r_\infty(\mathcal{U})\leq -\alpha + r_\infty \Vert \partial_uF\Vert_\infty<0$ by \eqref{eq:def_subcritical}. The estimate \eqref{eq:contraction_sg} follows then from functional analysis (see e.g. Theorem 3.1 of \cite{Pazy1974}).
\end{proof}

\subsection{About the large time behavior of $X_t$}
\begin{proof}[Proof of Theorem \ref{thm:large_time_cvg_u_t}]
We prove that 
\begin{itemize}
\item there exists a unique function $\ell:I\mapsto \mathbb{R}^+$ solution of \eqref{eq:def_l_lim}, continuous and bounded on $I$, and that
\item $\left(\lambda_t\right)_{t\geq 0}$ converges uniformly when $t\to\infty$ towards $\ell$.
\end{itemize} 
It gives then that $X_\infty:=\Vert	h\Vert_1 T_W\ell$ is the  unique solution of \eqref{eq:def_u_infty}. Then, as $X_t(x)=\int_{I} W(x,y)\int_0^t h(t-s) \lambda_s(y)ds~ dy$, as $\left(\lambda_t\right)$ is uniformly bounded, and as $h$ is integrable and $\lambda_t\to \ell$ uniformly, we conclude by dominated convergence that uniformly on $y$, $\int_0^t h(t-s) \lambda_s(y)ds \xrightarrow[t\to\infty]{} \Vert h \Vert_1 \ell(y)$. As $T_W$ is continuous, the result follows: $X_t$ converges uniformly towards $X_\infty$. We show first that $(\lambda_t)$ is uniformly bounded. Let $\overline{\lambda}_t(x)=\sup_{s\in [0,t]} \lambda_s(x)$, we have then with \eqref{eq:def_lambdabarre}, for $s\in[0, t]$
\begin{align*}
\lambda_s(x)&\leq F(0,0) + \Vert F \Vert_L \vert\eta_s(x)\vert+\Vert \partial_x F \Vert_\infty\int_{I} W(x,y)\int_0^s h(s-u) \lambda_u(y)dudy \\
&\leq F(0,0) + \Vert F \Vert_L \sup_{s,x}\vert\eta_s(x)\vert + \Vert \partial_x F \Vert_\infty \Vert h \Vert_1 T_W \overline{\lambda}_t(x),
\end{align*}
hence $\overline{\lambda}_t(x)\leq C_{F,\eta} + \Vert \partial_x F \Vert_\infty \Vert h \Vert_1 T_W \overline{\lambda}_t(x)$. An immediate iteration gives then $\overline{\lambda}_t(x) \leq C_{F,\eta,n_0,h} + \Vert \partial_x F \Vert_\infty^{n_0} \Vert h \Vert_1^{n_0} \left\vert T_W^{n_0}  \overline{\lambda}_t(x) \right\vert$, so that, by \eqref{eq:def_subcritical} and choosing $n_0$ sufficiently large such that $\Vert \partial_x F \Vert_\infty^{n_0} \Vert h \Vert_1^{n_0} \Vert T_W \Vert^{n_0}<1$, we obtain that $ \Vert \overline{\lambda}_t \Vert_\infty <C$, where $C$ is independent of $t$. Passing to the limit as $t\to\infty$, this implies that $\left(\lambda_t\right)_{t>0}$ is then uniformly bounded, i.e. $\sup_{t\geq 0}\sup_{x\in I} \left\vert \lambda_t(x)\right\vert=:\Vert \lambda\Vert_\infty <\infty$.

We show next that $\left(\lambda_t\right)$ converges pointwise. We start by studying the supremum limit of $\lambda_t$, denoted by $\overline{\ell}(x):= \limsup_{t\to\infty} \lambda_t(x) = \inf_{r>0}\sup_{t>r} \lambda_t(x) =: \inf_{r>0} \Lambda(r,x)$. Then for any $r>0$ and $t>r$:
\begin{align*}
\lambda_t(x) &= F\left( \int_{I} W(x,y)\int_0^r h(t-s) \lambda_s(y)ds~ dy + \int_{I} W(x,y)\int_r^t h(t-s) \lambda(s,y)ds~ dy, \eta_t(x) \right)
\\
&\leq F\left( \int_{I} W(x,y)\int_0^r h(t-s) \lambda_s(y)ds~ dy + \int_{I} W(x,y)\Lambda(r,y)\int_r^t h(t-s) ds~ dy, \eta_t(x) \right)
\end{align*}
by monotonicity of $F$ in the first variable and by positivity of $W$ and $h$. As $\int_r^t h(t-s)ds\leq \Vert h \Vert_1$, it gives
$$\lambda_t(x) \leq F\left( \int_{I} W(x,y)\int_0^r h(t-s) \lambda_s(y)ds~ dy +  \Vert h \Vert_1 \int_{I} W(x,y) \Lambda(r,y)dy, \eta_t(x) \right),$$
and as $h(t)\to 0$, by dominated convergence $\int_{I} W(x,y)\int_0^r h(t-s) \lambda_s(y)ds~ dy \xrightarrow[t\to\infty]{}0$ and by continuity and monotonicity of $F$, we obtain
\begin{equation}
\label{eq:limsup_l}
    \overline{\ell}(x) \leq F\left( \Vert h \Vert_1 \left(T_W\overline{\ell}\right)(x),\eta_\infty(x)\right).
\end{equation}
Note that $\Vert \overline{\ell}\Vert_\infty \leq \Vert \lambda \Vert_\infty<\infty$, by the first step of this proof. Denote in a same way  $\underline{\ell}(x):= \liminf_{t\to\infty} \lambda_t(x) = \sup_{r>0}\inf_{t>r} \lambda_t(x) =: \sup_{r>0} v(r,x)$, for any $t>0$ we have by monotonocity of $F$ in the first variable:
\begin{align*}
\lambda_t(x) &= F\left( \int_0^{\frac{t}{2}}\int_{I} W(x,y)h(t-s) \lambda_s(y) dyds + \int_{\frac{t}{2}}^t\int_{I} W(x,y)h(t-s) \lambda_s(y) dyds, \eta_t(x) \right)\\ 
&\geq  F\left( \int_{\frac{t}{2}}^t\int_{I} W(x,y)h(t-s) v\left(\frac{t}{2}, y\right) dyds, \eta_t(x) \right)\\
&= F\left(\int_{0}^{\frac{t}{2}}h(u)du \int_{I} W(x,y)v\left(\frac{t}{2}, y\right) dy, \eta_t(x) \right).
\end{align*}
Taking $\liminf_{t\to\infty}$ on both sides, by monotone convergence, we obtain 
\begin{equation}
\label{eq:liminf_l}
\underline{\ell}(x) \geq F\left( \Vert h \Vert_1 \left(T_W \underline{\ell}\right)(x), \eta_\infty(x) \right).
\end{equation}
Combining \eqref{eq:limsup_l} and \eqref{eq:liminf_l}, setting $H: l \in L^\infty \mapsto F\left( \Vert h \Vert_1 T_W l, \eta_\infty\right)\in L^{\infty}$, we have shown 
\begin{equation}
\label{eq:control_Hl}
    H \underline{\ell} \leq \underline{\ell} \leq \overline{\ell} \leq H\overline{\ell}.
\end{equation}
For any $l$ and $l'$ in $L^\infty(I)$ and any $x\in I$, we have
\begin{align*}
\vert Hl(x) - Hl'(x) \vert &\leq \left| F\left( \Vert h \Vert_1 \left(T_W l\right)(x), \eta_\infty(x)\right) - F\left( \Vert h \Vert_1 \left(T_W l'\right)(x), \eta_\infty(x)\right) \right|\\
&\leq \left\Vert \partial_x F \right\Vert_\infty \Vert h \Vert_1 \left|\left( T_W ( l - l') \right) (x)\right|.
\end{align*} 
By iteration we show that $\Vert H^{n_0}l - H^{n_0}l' \Vert_\infty \leq \left\Vert \partial_u F \right\Vert_\infty^{n_0}\Vert h \Vert _1^{n_0} \Vert T_W^{n_0} \Vert  \Vert l - l' \Vert_\infty$, so that, choosing again $n_0$ sufficiently large, $H^{n_0}$ is a contraction mapping by \eqref{eq:def_subcritical}. Hence, by \eqref{eq:control_Hl}, one has necessarily that for all $x\in I$ $\underline{\ell}(x)=\overline{\ell}(x)<+\infty$ thus $(\lambda_t)$ converges pointwise towards $\ell=\underline{\ell}=\overline{\ell}$ the unique fixed point of $H$ which satisfies \eqref{eq:def_l_lim}.

We show now that the family $\left(\lambda_t\right)_{t\geq 0}$ is equicontinuous so that the pointwise convergence will imply uniform convergence on the compact set $I$. For any $(x,y)\in I$ and $t\geq 0$, we have
\begin{align*}
\left\vert \lambda_t(x) - \lambda_t(y)\right\vert&=\left\vert F(X_t(x),\eta_t(x))-F(X_t(y),\eta_t(y)\right\vert\\
&\leq \Vert F \Vert_L \left( \left\vert X_t(x)-X_t(y)\right\vert + \left\vert \eta_t(x)-\eta_t(y)\right\vert\right).
\end{align*}
With \eqref{eq:def_delta_s}, we have
\begin{align*}
\left\vert \eta_t(x)-\eta_t(y)\right\vert&\leq \left\vert \eta_t(x)-\eta_\infty(x)\right\vert+\left\vert \eta_\infty(x)-\eta_\infty(y)\right\vert+\left\vert \eta_\infty(y)-\eta_t(y)\right\vert\\
&\leq 2\delta_t + \Vert \eta_\infty\Vert_L \vert x-y\vert,
\end{align*}
and as $\lambda$ is bounded, we have
\begin{align}\label{eq:reg_Xt_aux}
\left\vert X_t(x)-X_t(y)\right\vert  &= \left\vert \int_I \left(W(x,z)-W(y,z)\right)\int_0^t h(t-s)\lambda_s(z)dsdz\right\vert\notag\\
&\leq \Vert \lambda \Vert_\infty \Vert h \Vert_1 \int_I \left\vert W(x,z)-W(y,z)\right\vert dz.
\end{align} 
Then $\vert \lambda_t(x)-\lambda_t(y)\vert\leq C_{F,\lambda,h,W}\left(\delta_t+\vert x-y\vert+\int_I \left\vert W(x,z)-W(y,z)\right\vert dz\right)$. Fix $\varepsilon >0$, with \eqref{eq:def_delta_s}, one can find $T$ such that $ C_{F,\lambda,h,W}\delta_t\leq \dfrac{\varepsilon}{2}$ for any $t\geq T$, and as $W$ is uniformly continuous on $I^2$, one can find $\eta>0$ such that $C_{F,\lambda,h,W}\left(\vert x-y\vert+\int_I \left\vert W(x,z)-W(y,z)\right\vert dz\right)\leq \dfrac{\varepsilon}{2}$ when $\vert x-y\vert\leq \eta$. We can divide $[0,1]$ in intervals $[z_k,z_{k+1}]$ such that for any $k$, $ z_{k+1}-z_k \leq \eta$. Then, for any $x\in [0,1]$, one can find $z_k$ such that $\vert z_k-x\vert\leq \eta$, and $\vert \lambda_t(x)-\ell(x)\vert\leq \vert  \lambda_t(x)-\lambda_t(z_k)\vert + \vert \lambda_t(z_k)-\ell(z_k)\vert+\vert\ell(z_k)-\ell(x)\vert$. By pointwise convergence, $\vert\lambda_t(z_k)-\ell(z_k)\vert\leq\varepsilon$ for $t$ large enough (but independent of the choice of $x$), and $\vert \ell(z_k)-\ell(x)\vert\leq \varepsilon$ by taking the limit when $t\to\infty$ in $\vert\lambda_t(z_k)-\lambda_t(x)\vert\leq \varepsilon$. 
It gives then $\vert \lambda_t(x)-\ell(x)\vert\leq 3\varepsilon$ hence $\sup_{x\in I}\vert \lambda_t(x)-\ell(x)\vert \xrightarrow[t\to\infty]{}0$, 
i.e. $\left(\lambda_t\right)$ converges uniformly towards $\ell$. Similarly to \eqref{eq:reg_Xt_aux}, for any $x,x'\in I$,
$$\left\vert X_\infty(x) - X_\infty(x') \right\vert \leq \Vert h \Vert_1 \Vert \ell \Vert_\infty \int_I \left\vert   W(x,y) - W(x',y) \right\vert dy$$
which gives, as $W$ is uniformly continous, the continuity of $X_\infty$.
\end{proof}
\section{Large time behavior of $U_N(t)$}\label{S:proof_largetime}
The aim of the present section is to prove Theorem~\ref{thm:long_time}.
To study the behavior of $\left\Vert  X_N(t) - X_\infty\right\Vert_2$, let
\begin{equation}\label{eq:def_Y_N}
Y_N:=X_N-X_\infty.
\end{equation} 
The first step is to write the semimartingale decomposition of $Y_N$, written in a mild form (see Section~\ref{S:mild_formulation}). The proper control on the drift and noise terms are given in Propositions~\ref{prop:noise_perturbation} and~\ref{prop:drift_term}. In Section~\ref{S:proof_mainTh}, we give the proof of Theorem \ref{thm:long_time}, based in particular on the convergence on a bounded time interval in Proposition~\ref{prop:finite_time}.

\subsection{Mild formulation}
\label{S:mild_formulation}
\begin{prop}\label{prop:termes_sys_micros} 
The process $\left(Y_N(t)\right)_{t\geq 0}$ satisfies  the following semimartingale decomposition in $D([0,T],L^2(I))$, written in a mild form: for any $0\leq t_0\leq t$
\begin{equation}\label{eq:def_Y_N_termes}
Y_N(t)=e^{(t-t_0)\mathcal{L}}Y_N(t_0) + \phi_N(t_0,t) + \zeta_N(t_0,t)
\end{equation} 
where:
\begin{equation}\label{eq:def_phi_N}
\phi_N(t_0,t)=\int_{t_0}^t e^{(t-s)\mathcal{L}}r_N(s)ds
\end{equation}
with

\begin{multline}\label{eq:def_r_N}
r_N(t)(x)=T_W\left( g_N(t)\right)(x)+\\
\sum_{i=1}^N \left( \dfrac{1}{N\rho_N} \sum_{j=1}^N  \xi_{ij}^{(N)}F(X_{N,j }(t),\eta_t(x_j)) - \int_I W(x,y) F(X_N(t,y),\eta_t(y))dy \right)\mathbf{1}_{B_{N,i}}(x),
\end{multline}

\begin{multline}\label{eq:def_gN(s)}
g_N(t)(y):=  \int_0^1 (1-r) \partial^2_x F\left( X_\infty(y)+rY_N(t)(y),(1-r)\eta_\infty(y)+r\eta_t(y)\right) Y_N(t)(y)^2  dr+\\
\int_0^1 (1-r)\left(\eta_t(y)-\eta_\infty(y)\right)\cdot\partial^2_\eta F\left( X_\infty(y)+r Y_N(t)(y),(1-r)\eta_\infty(y)+r\eta_t(y)\right) \left(\eta_t(y)-\eta_\infty(y)\right)  dr\\
+\int_0^1 2(1-r) \partial^2_{x,\eta}F\left(X_\infty(y)+rY_N(t)(y),(1-r)\eta_\infty(y)+r\eta_t(y)\right)\cdot\left(\eta_t(y)-\eta_\infty(y)\right)Y_N(t)(y)dr\\
+\partial_\eta F\left(X_\infty(y),\eta_\infty(y)\right)\cdot \left(\eta_t(y) - \eta_\infty(y)\right),
\end{multline}
and
\begin{equation}\label{eq:def_zeta_N}
\zeta_N(t_0,t)=\int_{t_0}^t e^{(t-s)\mathcal{L}}dM_N(s)
\end{equation}
with
\begin{equation}\label{eq:def_M_N}
M_N(t)= \sum_{i=1}^N \sum_{j=1}^N \dfrac{w_{ij}}{N} \left( Z_{N,j}(t) - \int_0^t\lambda_{N,j}(s)ds\right) \mathbf{1}_{B_{N,i}}.
\end{equation}
\end{prop}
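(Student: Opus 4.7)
My plan is to exploit the Markov/exponential structure from Definition 2.6 specialized to $h(t)=e^{-\alpha t}$, derive an $L^2(I)$-valued SDE for the step-profile $X_N$, and then perform Duhamel's variation of constants against the bounded operator $\mathcal{L}$ from Proposition 2.9. The heart of the argument is matching the Taylor remainder around $(X_\infty,\eta_\infty)$ with the precise form of $g_N$ in \eqref{eq:def_gN(s)}, and isolating the graph-approximation error that constitutes the second term of $r_N$ in \eqref{eq:def_r_N}.

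First I would differentiate the exponential-convolution definition \eqref{eq:def_UiN} to obtain, for every $i$,
\begin{equation*}
dX_{N,i}(t)=-\alpha X_{N,i}(t-)\,dt+\sum_{j=1}^N \frac{w_{ij}^{(N)}}{N}\,dZ_{N,j}(t),
\end{equation*}
and then compensate the Poisson noise as $dZ_{N,j}(t)=\lambda_{N,j}(t)\,dt+dM^{0}_{N,j}(t)$ with $\lambda_{N,j}(t)=F(X_{N,j}(t-),\eta_t(x_j))$ coming from \eqref{eq:def_lambdaiN_intro}. Multiplying by $\mathbf{1}_{B_{N,i}}$ and summing over $i$ gives the $L^2(I)$-valued equation
\begin{equation*}
dX_N(t)=\Bigl[-\alpha X_N(t)+\sum_{i=1}^N \Bigl(\tfrac{1}{N\rho_N}\sum_j \xi_{ij}^{(N)}F(X_{N,j}(t),\eta_t(x_j))\Bigr)\mathbf{1}_{B_{N,i}}\Bigr]dt+dM_N(t),
\end{equation*}
where one recognises in the martingale part exactly $M_N$ of \eqref{eq:def_M_N} (using $w_{ij}^{(N)}=\xi_{ij}^{(N)}/\rho_N$).

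Next, since $X_\infty$ is time-independent we have $dY_N=dX_N$, and the defining equation \eqref{eq:def_u_infty} with $\|h\|_1=1/\alpha$ yields the crucial identity $\alpha X_\infty=T_W F(X_\infty,\eta_\infty)$. Inserting this in place of $\alpha X_\infty$ and then adding and subtracting $T_W F(X_N(t),\eta_t)$ and $T_W(GY_N(t))$ produces
\begin{equation*}
dY_N(t)=\bigl[\mathcal{L} Y_N(t)+T_W g_N(t)+\Delta_N(t)\bigr]dt+dM_N(t),
\end{equation*}
where $\Delta_N(t)$ is the empirical-vs-integral discrepancy and is exactly the second summand of \eqref{eq:def_r_N}, while $T_W g_N$ accounts for the nonlinear Taylor correction. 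To obtain the explicit form \eqref{eq:def_gN(s)} for $g_N$, I apply Taylor's formula with integral remainder at second order to the function $(h,k)\mapsto F(X_\infty+h,\eta_\infty+k)$ with $h=Y_N(t)$ and $k=\eta_t-\eta_\infty$: the first-order term $GY_N$ is absorbed in $\mathcal{L}Y_N$, the first-order term $\partial_\eta F(X_\infty,\eta_\infty)\cdot(\eta_t-\eta_\infty)$ gives the last line of \eqref{eq:def_gN(s)}, and the three remainder contributions in $\partial_x^2F$, $\partial_{x,\eta}^2F$, $\partial_\eta^2F$ give precisely the three integral terms. The $\mathcal{C}^2$-with-bounded-derivatives hypothesis on $F$ in Hypothesis 2.1 ensures the remainder is well-defined pointwise.

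Finally, I pass to the mild form by Duhamel. Since $T_W$ is bounded on $L^2(I)$ by Proposition 2.7 and $G=\partial_x F(X_\infty,\eta_\infty)\in L^\infty(I)$, the operator $\mathcal{L}$ is bounded on $L^2(I)$ and $e^{t\mathcal{L}}=\sum_{k\geq 0}\tfrac{t^k}{k!}\mathcal{L}^k$ is norm-continuous. Then $s\mapsto e^{-(s-t_0)\mathcal{L}}Y_N(s)$ is a càdlàg $L^2$-valued semimartingale, and integration by parts (Itô for bounded-operator-linear functionals of a finite-activity jump process is elementary) gives
\begin{equation*}
e^{-(t-t_0)\mathcal{L}}Y_N(t)-Y_N(t_0)=\int_{t_0}^t e^{-(s-t_0)\mathcal{L}}r_N(s)\,ds+\int_{t_0}^t e^{-(s-t_0)\mathcal{L}}dM_N(s),
\end{equation*}
and applying $e^{(t-t_0)\mathcal{L}}$ yields \eqref{eq:def_Y_N_termes} with $\phi_N$, $\zeta_N$ as in \eqref{eq:def_phi_N}, \eqref{eq:def_zeta_N}. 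The one step requiring genuine care (rather than bookkeeping) is verifying the correct pairing of the Taylor remainders in the $(x,\eta)$ variables so that $g_N$ matches \eqref{eq:def_gN(s)} exactly; everything else is a direct computation given the exponential-memory Markov property.
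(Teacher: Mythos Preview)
Your proposal is correct and follows essentially the same route as the paper: derive the differential form $dY_N=\mathcal{L}Y_N\,dt+r_N\,dt+dM_N$ by compensating the Poisson jumps, using $\alpha X_\infty=T_WF(X_\infty,\eta_\infty)$, and matching the Taylor remainder to $g_N$, then pass to the mild formulation. The only (minor) difference is that the paper obtains \eqref{eq:def_Y_N_termes} by invoking Lemma~3.2 of \cite{Zhu2017}, whereas you exploit directly that $\mathcal{L}$ is a \emph{bounded} operator on $L^2(I)$ so that $e^{t\mathcal{L}}$ is norm-continuous and ordinary integration by parts on $s\mapsto e^{-(s-t_0)\mathcal{L}}Y_N(s)$ suffices; this makes your argument slightly more self-contained.
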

$\phi_N$ is the drift term and $\zeta_N$ is the noise term coming from the jumps of the process $X_N$.

\begin{proof}[Proof of Proposition~\ref{prop:termes_sys_micros}]
From \eqref{eq:def_UiN} and \eqref{eq:def_UN}, we obtain that $X_N$ verifies
\begin{equation}\label{eq:dUN}
dX_N(t)=-\alpha X_N(t)dt + \sum_{i=1}^N  \sum_{j=1}^N \dfrac{w_{ij}}{N}  dZ_{N,j}(t)\mathbf{1}_{ B_{N,i}}.
\end{equation}
The centered noise  $M_N$ defined in \eqref{eq:def_M_N} verifies $$\displaystyle dM_{N}(t):= \sum_{i=1}^N \sum_{j=1}^N \dfrac{w_{ij}}{N} \left( dZ_{N,j}(t) - F(X_{N,j}(t),\eta_t(x_j))dt\right) \mathbf{1}_{B_{N,i}},$$ and is a martingale in $L^2(I)$. Thus recalling the definition of $X_\infty$ in \eqref{eq:def_u_infty} and by inserting the term $\sum_{i=1}^N \sum_{j=1}^N \dfrac{w_{ij}}{N}F(X_{N,j}(t),\eta_t(x_j))dt\mathbf{1}_{ B_{N,i}}$ in \eqref{eq:dUN}, we obtain 
\begin{equation}\label{eq:partial_incomplet}
d Y_N(t) = -\alpha Y_N(t)  + d M_{N}(t) + \sum_{i=1}^N \left( \sum_{j=1}^N \dfrac{w_{ij}}{N}  F(X_{N,j}(t),\eta_t(x_j))\right)\mathbf{1}_{B_{N,i}}dt  - T_W F(X_\infty,\eta_\infty)dt.
\end{equation}
A Taylor's expansion gives
$$F(X_N(t,y),\eta_t(y)) - F(X_\infty(y),\eta_\infty(y))=\partial_x F\left(X_\infty(y),\eta_\infty(y)\right) \left( X_N(t,y) - X_\infty(y)\right) + g_N(t)(y),$$
with $g_N$ given in \eqref{eq:def_gN(s)}. Hence,
we have with $G$ defined in \eqref{eq:def_G}
$$-T_W F(X_\infty,\eta_\infty)(x)=-\int_I W(x,y)F(X_N(t,y),\eta_t(y))dy + T_W(GY_N(t)) + T_Wg_N(t)(x),$$ 
hence coming back to \eqref{eq:partial_incomplet} and recognizing the operator $\mathcal{L}$ \eqref{eq:def_operator_L}
\begin{multline*}
d Y_N(t) = \mathcal{L} Y_N(t)  + d M_{N}(t) + \sum_{i=1}^N \left( \sum_{j=1}^N \dfrac{w_{ij}}{N}  F(X_{N,j}(t),\eta_t(x_j))\right)\mathbf{1}_{B_{N,i}}dt\\ - T_W F(X_N(t,\cdot),\eta_t(\cdot))dt+ T_Wg_N(t).
\end{multline*}
We recognize $r_N$ defined in \eqref{eq:def_r_N}, and obtain exactly 
\begin{equation}\label{eq:def_Y_N_diff}
dY_N(t) = \mathcal{L}Y_N(t)dt + r_N(t)dt + dM_N(t).
\end{equation}
Then the mild formulation \eqref{eq:def_Y_N_termes} is a direct consequence of Lemma 3.2 of \cite{Zhu2017}: the unique strong solution to \eqref{eq:def_Y_N_diff} is indeed given by \eqref{eq:def_Y_N_termes}.
\end{proof}

\begin{prop}[Noise perturbation]\label{prop:noise_perturbation} Let $m\geq 1$ and $T> t_0\geq 0$. Under Hypotheses \ref{hyp_globales} and \ref{hyp:scenarios}, there exists a constant $C=C(T,m,F,\eta_0)>0$ such that $\mathbb{P}$-almost surely for $N$ large enough: 
$$\mathbf{E}\left[\sup_{s\leq T} \Vert \zeta_N(t_0,s) \Vert_2^{2m}\right] \leq \dfrac{C}{\left(N\rho_N\right)^m}.$$
\end{prop}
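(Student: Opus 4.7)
The strategy is to reduce the supremum-in-time moment estimate on the stochastic convolution $\zeta_N(t_0,\cdot)$ to an $m$-th moment estimate on the $L^2$-quadratic variation of the pure-jump martingale $M_N$, via a Kotelenez-type maximal inequality, and then to close the estimate using graph concentration together with standard moment bounds on the Hawkes counts.

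I would first rewrite $\zeta_N$ explicitly as a sum of $N$ independent Poisson stochastic integrals,
\begin{equation*}
\zeta_N(t_0,t)=\sum_{j=1}^N\int_{t_0}^t\int_0^\infty e^{(t-s)\mathcal{L}}\chi_j^N\,\mathbf{1}_{z\leq\lambda_{N,j}(s)}\,\widetilde{\pi}_j(ds,dz),\qquad\chi_j^N:=\sum_{i=1}^N\frac{w_{ij}^{(N)}}{N}\mathbf{1}_{B_{N,i}},
\end{equation*}
where $\widetilde{\pi}_j=\pi_j-ds\,dz$ and $\chi_j^N\in L^2(I)$ is $\sigma(\xi^{(N)})$-measurable. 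Since $\mathcal{L}$ is bounded on $L^2(I)$, the semigroup $(e^{t\mathcal{L}})$ is uniformly bounded on $[0,T]$. A Kotelenez-type maximal inequality for $L^2$-valued stochastic convolutions driven by pure-jump martingales then yields
\begin{equation*}
\mathbf{E}\sup_{s\in[t_0,T]}\Vert\zeta_N(t_0,s)\Vert_2^{2m}\leq C(T,m,F)\,\mathbf{E}\bigl([M_N]_T-[M_N]_{t_0}\bigr)^m,
\end{equation*}
and the $L^2$-quadratic variation computes explicitly as $[M_N]_T-[M_N]_{t_0}=\sum_{j=1}^N\Vert\chi_j^N\Vert_2^2(Z_{N,j}(T)-Z_{N,j}(t_0))$, since a single jump of $M_N$ at a spike time of neuron $j$ produces an $L^2$-increment of norm $\Vert\chi_j^N\Vert_2$.

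To control the quadratic-variation term, I would apply the Jensen inequality $\bigl(\sum_j a_j x_j\bigr)^m\leq\bigl(\sum_j a_j\bigr)^{m-1}\sum_j a_j x_j^m$ for $a_j,x_j\geq 0$, combined with two ingredients. First, graph concentration: $\sum_j\Vert\chi_j^N\Vert_2^2=\frac{1}{N^3\rho_N^2}\sum_{i,j}\xi_{ij}^{(N)}$, a sum whose mean is bounded by $1/(N\rho_N)$; Bernstein's inequality combined with Borel-Cantelli, under Hypothesis~\ref{hyp:scenarios}, gives $\mathbb{P}$-a.s., for $N$ large, $\sum_j\Vert\chi_j^N\Vert_2^2\leq 2/(N\rho_N)$. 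Second, uniform $m$-th moment bounds on the Hawkes counts, $\sup_j\mathbf{E}[Z_{N,j}(T)^m]\leq C(T,m,F,\eta_0)$, obtained by iterating the intensity equation~\eqref{eq:def_lambdaiN_intro}: immediate when $F$ is bounded, and otherwise derived by comparison with a linear subcritical Hawkes process in the Lipschitz unbounded case. Combining these two facts yields $\mathbf{E}([M_N]_T-[M_N]_{t_0})^m\leq C/(N\rho_N)^m$ and hence the claim.

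The main obstacle is the maximal inequality of the first step: stochastic-convolution bounds of Kotelenez type are classical for continuous Gaussian noise through the factorization method of Da~Prato-Zabczyk, but their extension to the pure-jump setting is more delicate, since the jump-size contribution in the BDG inequality for Poisson integrals conflicts with the H\"older integrability required by factorization (the conditions $\alpha>1/(2m)$ and $2m\alpha<1$ cannot both be met for $m\geq 1$). If a direct application of such a maximal inequality is not available, a self-contained alternative is a time discretization on a grid $t_k=t_0+k\delta$: use the cocycle identity $\zeta_N(t_0,t)=e^{(t-t_k)\mathcal{L}}\zeta_N(t_0,t_k)+\int_{t_k}^t e^{(t-s)\mathcal{L}}dM_N(s)$ on $[t_k,t_{k+1}]$, bound $\Vert\zeta_N(t_0,t_k)\Vert_2^{2m}$ at grid points via BDG applied to the fixed-$t_k$ martingale $s\mapsto\int_{t_0}^s e^{(t_k-u)\mathcal{L}}dM_N(u)$, and estimate the last increment by Doob's inequality on $M_N$ together with the Lipschitz-in-time bound $\Vert e^{h\mathcal{L}}-I\Vert_{\mathrm{op}}\leq Ch$ on small $h$. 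To avoid picking up a factor equal to the number of grid points, one would exploit the spectral-gap contraction $\Vert e^{t\mathcal{L}}\Vert\leq e^{-\gamma t}$ of Proposition~\ref{prop:operateur_L}.
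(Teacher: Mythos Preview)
Your strategy is correct and coincides with the paper's route: both reduce the problem to an $m$-th moment bound on $\sum_j \Vert\chi_j^N\Vert_2^2\, Z_{N,j}(T)$, then close via the graph estimate $\sup_j\sum_i\xi_{ij}/(N\rho_N)\leq 2$ (Lemma~\ref{prop:estimees_IC}) together with moment bounds on the Hawkes counts. Two points of comparison are worth noting.

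First, the maximal inequality you invoke as a black box is precisely what the paper establishes by hand, following \cite{Zhu2017}: one applies It\^o's formula to $\phi(v)=\Vert v\Vert_2^{2m}$, uses that $\mathcal{L}$ generates a contraction semigroup so the drift term $I_0=\int_0^t 2m\Vert\zeta_N\Vert_2^{2m-2}\mathrm{Re}\langle\zeta_N,\mathcal{L}\zeta_N\rangle\,ds$ is nonpositive by Lumer--Phillips, applies BDG to the martingale part and a second-order Taylor estimate to the jump-correction part, and then absorbs an $\varepsilon\,\mathbf{E}[\sup\Vert\zeta_N\Vert_2^{2m}]$ term via Young's inequality. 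This is self-contained and avoids both the factorization obstruction you flag and the time-discretization fallback; it exploits exactly the contraction property from Proposition~\ref{prop:operateur_L} that you mention at the end.

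Second, the paper does not prove the uniform-in-$j$ bound $\sup_j\mathbf{E}[Z_{N,j}(T)^m]\leq C$ you propose, but only the averaged version $\sup_N\mathbf{E}\bigl[\tfrac{1}{N}\sum_j Z_{N,j}(T)^m\bigr]<\infty$ (Proposition~\ref{prop:control_mean_Zj^m}), which suffices because the weights $\Vert\chi_j^N\Vert_2^2=\tfrac{1}{N^3\rho_N^2}\sum_i\xi_{ij}$ are uniformly controlled by $\tfrac{2}{N^2\rho_N}$ via \eqref{eq:estimees_IC}, so that $\sum_j\Vert\chi_j^N\Vert_2^2 Z_{N,j}(T)\leq \tfrac{2}{N\rho_N}\cdot\tfrac{1}{N}\sum_j Z_{N,j}(T)$ directly. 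Your $\sup_j$ version is plausible (the same Gr\"onwall iteration works with $\sup_i S_i\leq 2$), but note that the ``subcritical'' qualifier in your comparison argument is irrelevant here: finite-$T$ moment bounds for linear Hawkes hold regardless of subcriticality, by Gr\"onwall.
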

 The proof is postponed to Section \ref{S:proof_NP}.

\begin{prop}[Drift term]\label{prop:drift_term}
Under Hypothesis \ref{hyp_globales}, for any $t\geq t_0>0$, $\mathbb{P}$-almost surely if $N$ is large enough,
\begin{align}\label{eq:control_drift}
\Vert \phi_N(t_0,t)\Vert_2 &\leq C_\text{drift} \left( \int_{t_0}^t e^{-(t-s)\gamma} \Vert Y_N(s) \Vert_2^2ds + G_{N}+\int_{t_0}^t e^{-\gamma(t-s)} \left(\delta_s^2 +\delta_s\right) ds\right),
\end{align}
where $C_\text{drift}=C_{W,F,\alpha}$, $\gamma$ is defined in \eqref{eq:def_gamma}, $\delta_s$ is defined in \eqref{eq:def_delta_s} and $G_N=G_N(\xi)$ is an explicit quantity  to be found in the proof that tends to 0 as $N\to \infty$.
\end{prop}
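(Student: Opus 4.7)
The plan is to combine the spectral contraction \eqref{eq:contraction_sg} with a term-by-term analysis of $r_N(s)$. From \eqref{eq:def_phi_N} and Proposition~\ref{prop:operateur_L}, one obtains directly
\begin{equation*}
\Vert \phi_N(t_0,t)\Vert_2 \;\leq\; \int_{t_0}^t e^{-\gamma(t-s)}\Vert r_N(s)\Vert_2\, ds,
\end{equation*}
so the work reduces to a pointwise-in-$s$ estimate of $\Vert r_N(s)\Vert_2$. I would split $r_N = T_W g_N + \Xi_N$, where $\Xi_N$ denotes the graph/mean-field discrepancy appearing on the second line of \eqref{eq:def_r_N}, and treat each piece separately.

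For the Taylor piece $T_W g_N(s)$ I would use the elementary bound $\Vert T_W u\Vert_2 \leq \Vert W\Vert_\infty \Vert u\Vert_1$ (which holds on $I=[0,1]$) together with the uniform boundedness of the second-order derivatives of $F$ (Hypothesis~\ref{hyp_globales}) to handle each of the four summands in \eqref{eq:def_gN(s)}. The $Y_N(s)^2$ term contributes $C\Vert Y_N(s)^2\Vert_1 = C\Vert Y_N(s)\Vert_2^2$; the $(\eta_s-\eta_\infty)^2$ term contributes $C\delta_s^2$; the linear $\partial_\eta F\cdot(\eta_s-\eta_\infty)$ term contributes $C\delta_s$; and the cross term is controlled by $C\delta_s \Vert Y_N(s)\Vert_1 \leq C\delta_s\Vert Y_N(s)\Vert_2$, which Young's inequality splits into $\tfrac{1}{2}(\delta_s^2 + \Vert Y_N(s)\Vert_2^2)$. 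Multiplying by $e^{-\gamma(t-s)}$ and integrating recovers the three groups of terms appearing in \eqref{eq:control_drift}.

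The main obstacle is the graph discrepancy $\Xi_N(s)$, whose summands depend both on the Bernoullis $\xi_{ij}^{(N)}$ and on the (graph-dependent) trajectory $X_{N,j}(s)$. I would decompose it further into a \emph{concentration} piece comparing $\tfrac{1}{N\rho_N}\sum_j \xi_{ij}^{(N)}F_j(s)$ to $\tfrac{1}{N}\sum_j W(x_i,x_j)F_j(s)$ (with $F_j(s):=F(X_{N,j}(s),\eta_s(x_j))$), a \emph{Riemann} piece comparing the latter sum to $\int_I W(x_i,y)F(X_N(s,y),\eta_s(y))dy$, and a \emph{kernel regularisation} piece comparing $W(x_i,\cdot)$ to $W(x,\cdot)$ for $x\in B_{N,i}$. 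The last two are deterministic and are handled by the uniform continuity of $W$ together with a uniform bound on $F_j(s)$ (immediate when $F$ is bounded under \eqref{eq:dilution_Fbounded}, or via a crude pathwise bound on $X_{N,j}$ in the unbounded case), and both tend to $0$ with $N$. The delicate part is the concentration piece: the $F_j(s)$ are themselves functions of $\xi$, so I would decouple by passing to a supremum over all admissible values of $(F_j(s))_j$, i.e.\ controlling the graph error in operator norm acting on unit $\ell^2$ vectors, and then combine a Bernstein-type estimate for sums of independent Bernoullis with a fourth-moment/Markov plus Borel--Cantelli argument to upgrade the expected decay into an almost-sure uniform-in-$s$ bound $G_N\to 0$. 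The dilution conditions \eqref{eq:dilution}--\eqref{eq:dilution_Fbounded} are precisely calibrated to make this final step succeed; this is also where the distinction between the bounded and unbounded $F$ regimes enters the proof.
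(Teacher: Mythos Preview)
Your decomposition of $r_N$ into the Taylor piece $T_Wg_N$ and the graph discrepancy $\Xi_N$, with $\Xi_N$ further split into concentration, Riemann, and kernel-regularisation pieces, is exactly the one the paper uses (Lemmas~\ref{lem:drift_term_quadratic}--\ref{lem:drift_term_phi_N3}). Your treatment of $T_Wg_N$ is fine and matches Lemma~\ref{lem:drift_term_quadratic}. In the bounded-$F$ regime your operator-norm idea for the concentration piece is essentially the paper's Grothendieck route (Theorem~\ref{thm:grothendieck} plus Theorem~\ref{thm:tao2012_upper_tail}), and the other two pieces are indeed direct; so for bounded $F$ your plan works and is essentially the paper's.

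The gap is in the unbounded case. Your proposal for the Riemann and kernel pieces relies on ``a crude pathwise bound on $X_{N,j}$'', and for the concentration piece on taking a supremum over admissible values of $(F_j(s))_j$. Neither of these is available: there is no a priori uniform-in-$s$ pathwise control on $X_{N,j}(s)$ (it jumps by $w_{ij}/N$ whenever neuron $j$ spikes, and the intensity is itself unbounded), and the supremum over admissible $F_j$ is infinite. The paper does \emph{not} bound $F_j(s)$ directly. Instead, in each of the three discrepancy pieces it inserts $F(X_\infty(x_j),\eta_\infty(x_j))$ (or $F(X_\infty(x_j),\eta_s(x_j))$), which \emph{is} bounded because $X_\infty$ is, and writes $F_j(s)=\big(F_j(s)-F(X_\infty(x_j),\cdot)\big)+F(X_\infty(x_j),\cdot)$. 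The bounded part is then handled by genuine concentration (Azuma--Hoeffding on a martingale in $N$, see the treatment of $\gamma_{N,2}$ in Lemma~\ref{lem:drift_term_phi_N1}, and Lemma~\ref{lem:maj_S} for the cross-terms $S_{jj'}$), while the difference part is controlled by $\Vert F\Vert_L\,|Y_N(s)(x_j)|$ and produces an \emph{additional} $\Vert Y_N(s)\Vert_2^2$ contribution multiplied by a vanishing graph quantity (so it is absorbed into the $\int e^{-\gamma(t-s)}\Vert Y_N(s)\Vert_2^2\,ds$ term of \eqref{eq:control_drift}). This centering at $X_\infty$ is the missing idea in your sketch: without it, the decoupling you describe does not go through when $F$ is unbounded, and it is precisely the reason why the graph-discrepancy pieces $\phi_{N,1},\phi_{N,2},\phi_{N,3}$ each carry a $\Vert Y_N(s)\Vert_2^2$ integral in Lemmas~\ref{lem:drift_term_phi_N1}--\ref{lem:drift_term_phi_N3}, not only the Taylor piece.
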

 The proof is postponed to Section \ref{S:proof_drift}.


\subsection{Proof of the large time behaviour}
\label{S:proof_mainTh}
We prove here Theorem~\ref{thm:long_time}, based on the results of Section~\ref{S:mild_formulation}. The approach followed is somehow formally similar to the strategy of proof developed in \cite{Coppini2022} for the diffusion case.
\begin{proof}[Proof of Theorem \ref{thm:long_time}]
Choose $m\geq 1$ and $t_f>0$. Let 
\begin{equation}\label{eq:def_varepsilon_0}
\varepsilon_0 = \dfrac{\gamma}{6 C_{drift}},
\end{equation}
where $\gamma$ is defined in \eqref{eq:def_gamma} and the constant $C_{drift}$ comes from Proposition \ref{prop:drift_term} above. We consider $\varepsilon$ small enough, such that $\varepsilon<\varepsilon_0$.
As $(X_t)$ converges uniformly towards $X_\infty$ (Theorem \ref{thm:large_time_cvg_u_t}), there exists $t_\varepsilon^1<\infty$ such that 
\begin{equation}\label{eq:choice_t_varepsilon1}
\left\Vert X_t-X_\infty \right\Vert_2\leq \dfrac{\varepsilon}{4},\quad t\geq t_\varepsilon^1.
\end{equation}Moreover, with \eqref{eq:def_delta_s}, we also have that $\int_0^t e^{-\gamma(t-s)}\left(\delta_s^2+\delta_s\right)ds \xrightarrow[t\to\infty]{}0$, hence there exists $t_\varepsilon^2<\infty$ such that 
\begin{equation}\label{eq:choice_t_varepsilon2}
C_\text{drift}\int_0^t e^{-\gamma(t-s)}\left(\delta_s^2+\delta_s\right)ds \leq \dfrac{\varepsilon}{18}, \quad  t\geq t_\varepsilon^2.
\end{equation}
We set now $t_\varepsilon=\max(t_\varepsilon^1,t_\varepsilon^2)$. Let  $T$ such that
\begin{equation}\label{eq:def_T}
e^{-\gamma T} <\frac{1}{3}, \quad T>t_f.
\end{equation}
The strategy of proof relies on the following time discretisation. The point is to control $\Vert X_N(t)-X_\infty\Vert_2$ on $[t_\varepsilon,T_N]$ with
\begin{equation}\label{eq:def_TN}
T_N:=a_N T+t_\varepsilon, \quad \text{with } a_N:=\lceil (N\rho_N)^m \rceil,
\end{equation} 
which will imply the result \eqref{eq:long_time_pol} as $[t_\varepsilon,(N \rho_{ N})^{ m}t_f]\subset [t_\varepsilon,T_N]$ since $T>t_f$. We decompose below the interval $[t_\varepsilon,T_N]$ into $a_N$ intervals of length $T$. We define the following events, with $0\leq t_a\leq t_b$ (recall that $Y_N(t)=X_N(t)-X_\infty$)
\begin{align}\label{eq:def_events}
A_1^N(\varepsilon)&:=\left\{ \left\Vert X_N(t_\varepsilon)-X_\infty\right\Vert_2 \leq \dfrac{\varepsilon}{2}\right\},\\
A_2^N(\varepsilon)&:=\left\{ \sup_{t\in [t_\varepsilon,t_\varepsilon+T]} \left\Vert \zeta_N(t_\varepsilon, t) \right\Vert_2 \leq\dfrac{\varepsilon}{18}\right\},\\
E(t_a,t_b)&:= \left\{ \max \left( 2\left\Vert Y_N(t_a)\right\Vert_2, \sup_{t\in [t_a,t_b]} \left\Vert Y_N(t) \right\Vert_2, 2\left\Vert Y_N(t_b)\right\Vert_2 \right) \leq \varepsilon\right\}\label{eq:def_event_E}.
\end{align}

By \eqref{eq:choice_t_varepsilon1}, and as Proposition \ref{prop:finite_time} gives that $\mathbf{P}\left( \sup_{t\in [0,t_\varepsilon]} \left\Vert X_N(t) - X_t\right\Vert_2>\dfrac{\varepsilon}{4}\right) \xrightarrow[N\to\infty]{}0
$, we have by triangle inequality 
\begin{equation}\label{eq:A1P1}
\mathbf{P}\left(  A_1^N(\varepsilon) \right) \xrightarrow[N\to\infty]{}1.
\end{equation}

\paragraph{Step 1}   We have from the definition \eqref{eq:def_event_E} of $E(t_a,t_b)$ that
\begin{equation}\label{eq:PminE}
\mathbf{P}\left( \sup_{t\in [t_\varepsilon,T_N]} \left\Vert X_N(t) - X_\infty\right\Vert_2\leq\varepsilon\right) \geq \mathbf{P}\left( E(t_\varepsilon,T_N)\right) = \mathbf{P}\left( E(t_\varepsilon, T_N)\vert A_1^N(\varepsilon) \right) \mathbf{P}\left(A_1^N(\varepsilon)\right).
\end{equation}
Moreover,
\begin{align*}
&\mathbf{P}\left( E(t_\varepsilon,T_N)\vert A_1^N(\varepsilon)\right)\\
&= \mathbf{P}\left( E(t_\varepsilon,t_\varepsilon+a_N T)\vert A_1^N(\varepsilon)\right)\\
&\geq  \mathbf{P}\left( E(t_\varepsilon,t_\varepsilon+a_N T)\cap E(t_\varepsilon,t_\varepsilon+(a_N-1) T) \vert A_1^N(\varepsilon)\right)\\
&=  \mathbf{P}\left( E(t_\varepsilon,t_\varepsilon+a_N T)\vert E(t_\varepsilon,t_\varepsilon+(a_N-1) T) \cap A_1^N(\varepsilon)\right)\mathbf{P}\left(  E(t_\varepsilon,t_\varepsilon+(a_N-1) T) \vert A_1^N(\varepsilon)\right).
\end{align*}
Recall that we are in the exponential case \eqref{eq:def_exponential}, so that $\left( X_N(t)\right)_t$ is a Markov process. Thus by Markov property
\begin{align*}
&\mathbf{P}\left( E(t_\varepsilon,t_\varepsilon+a_N T)\vert E(t_\varepsilon,t_\varepsilon+(a_N-1) T) \cap A_1^N(\varepsilon)\right)\\
=& \mathbf{P}\left( E(t_\varepsilon+(a_N-1)T,t_\varepsilon+a_N T)\vert E(t_\varepsilon,t_\varepsilon+(a_N-1) T)\right)\\
=&  \mathbf{P}\left( E(t_\varepsilon+(a_N-1)T,t_\varepsilon+a_N T)\left\vert \left\{ \left\Vert Y_N(t_\varepsilon+(a_N-1)T)\right\Vert_2 \leq \dfrac{\varepsilon}{2}\right\}\right.\right).
\end{align*}
$\mathbf{P}\left( E(t_\varepsilon+(a_N-1)T,t_\varepsilon+a_N T)\vert \left\{ \left\Vert Y_N(t_\varepsilon+(a_N-1)T)\right\Vert_2 \leq \dfrac{\varepsilon}{2}\right\}\right))$ means that, under an initial condition at $t_\varepsilon+(a_N-1)T$, we look at the probability that $Y_N$ stays under $\varepsilon$ on the interval $[t_\varepsilon+(a_N-1)T,t_\varepsilon+a_N T]$ of size $T$ and comes back under $\dfrac{\varepsilon}{2}$ at the final time $t_\varepsilon+a_N T$. By Markov's property, it is exactly $\mathbf{P}\left( E(t_\varepsilon,t_\varepsilon+T) \vert A_1^N(\varepsilon)\right)$. An immediate iteration gives then
\begin{equation}\label{eq:Ean}
\mathbf{P}\left( E(t_\varepsilon,T_N)\vert A_1^N(\varepsilon)\right) \geq \mathbf{P}\left( E(t_\varepsilon,t_\varepsilon+T) \vert A_1^N(\varepsilon)\right)^{a_N}.
\end{equation}
By \eqref{eq:A1P1}, from now on we consider that we are on this event $A_1^N(\varepsilon)$ and omit this notation for simplicity.

\paragraph{Step 2} We show that 
\begin{equation}\label{eq:AN2incluE}
A_2^N(\varepsilon)\subset E(t_\varepsilon, t_\varepsilon+T).
\end{equation} Let us place ourselves in $A_2^N(\varepsilon)$. As we are also under $A_1^N(\varepsilon)$, we have indeed $\left\Vert Y_N(t_\varepsilon)\right\Vert_2\leq \dfrac{\varepsilon}{2}$ for the first condition of $E(t_\varepsilon, t_\varepsilon+T)$. As $Y_N$ verifies \eqref{prop:termes_sys_micros}, it can be written for $t\geq t_\varepsilon$
\begin{equation}\label{eq:Y_N-MF_t_epsilon}
Y_N(t)=e^{\mathcal{L}(t-t_\varepsilon)}Y_N(t_\varepsilon) +  \phi_N(t_\varepsilon, t)+\zeta_N(t_\varepsilon, t).
\end{equation}

For any $t\in [t_\varepsilon,t_\varepsilon+T]$,
\begin{align}\label{eq:maintheorem_aux2}
\left\Vert \phi_N(t_\varepsilon, t) \right\Vert_2 &\leq C_\text{drift} \left( \int_{t_\varepsilon}^t e^{-(t-s)\gamma} \Vert Y_N(s) \Vert_2^2ds + G_{N}+\int_{t_0}^t e^{-\gamma(t-s)} \left(\delta_s^2 +\delta_s\right) ds\right)\notag\\
&\leq C_\text{drift} \left( \int_{t_\varepsilon}^t e^{-(t-s)\gamma} \Vert Y_N(s) \Vert_2^2ds \right)+ \dfrac{\varepsilon}{9}
\end{align}
where the first inequality comes from Proposition  \ref{prop:drift_term}, and the second is true for $N$ large enough using $G_N\to 0$ and \eqref{eq:choice_t_varepsilon2}. Coming back to \eqref{eq:Y_N-MF_t_epsilon}, using that by Proposition \ref{prop:operateur_L} 
\begin{equation}\label{eq:maintheorem_aux4}
\left\Vert e^{\mathcal{L}(t-t_\varepsilon)}Y_N(t_\varepsilon) \right\Vert_2\leq  e^{-\gamma(t-t_\varepsilon)}\left\Vert Y_N(t_\varepsilon)\right\Vert_2,
\end{equation}
and using \eqref{eq:maintheorem_aux2}, we have on $A_1^N(\varepsilon)\cap A_2^N(\varepsilon)$
$$\left\Vert Y_N(t) \right\Vert_2 \leq \dfrac{\varepsilon}{2} +  C_\text{drift} \left( \int_{t_\varepsilon}^t e^{-(t-s)\gamma} \Vert Y_N(s) \Vert_2^2ds \right)+\dfrac{\varepsilon}{9}+\dfrac{\varepsilon}{18}.$$
Let $\delta>0$ such that $\delta\leq \min\left( \dfrac{\varepsilon}{6},\dfrac{\gamma}{9C_\text{drift}}\right)$. Recall that $\left\Vert Y_N(\cdot)\right\Vert_2$ is not a continuous function, it jumps whenever a spike of the process $\left(Z_{N,1},\cdots,Z_{N,N}\right)$ occurs, but the size jump never exceeds $\dfrac{1}{N}$, and for $N$ large enough $\dfrac{1}{N}\leq \dfrac{\delta}{2}$. Then, one can apply Lemma \ref{lem:gronwal_quadratic} and obtain that for all $N$ large enough,
\begin{equation}\label{eq:maintheorem_aux3}
\sup_{t\in [t_\varepsilon,t_\varepsilon+T]}\left\Vert Y_N(t)\right\Vert_2\leq \dfrac{\varepsilon}{2}+ 3\delta\leq\varepsilon.
\end{equation}
It remains to prove that $\left\Vert Y_N(t_\varepsilon+T)\right\Vert_2\leq\dfrac{\varepsilon}{2}$. We obtain from \eqref{eq:Y_N-MF_t_epsilon}, \eqref{eq:maintheorem_aux2} and \eqref{eq:maintheorem_aux4} for $t=t_\varepsilon + T$ on $A_1^N(\varepsilon)\cap A_2^N(\varepsilon)$
$$\left\Vert Y_N(t_\varepsilon + T)\right\Vert_2\leq  e^{-\gamma T}\dfrac{\varepsilon}{2} + \dfrac{\varepsilon}{6} + C_\text{drift}\int_{t_\varepsilon}^{t_\varepsilon+T} e^{-(t_\varepsilon+T-s)\gamma} \Vert Y_N(s) \Vert_2^2ds.$$
Using the a priori bound \eqref{eq:maintheorem_aux3}
\begin{align*}
\left\Vert Y_N(t_\varepsilon + T)\right\Vert_2&\leq  e^{-\gamma T}\dfrac{\varepsilon}{2} +\dfrac{\varepsilon}{12} + \varepsilon^2 \dfrac{C_\text{drift}}{\gamma}\leq e^{-\gamma T}\dfrac{\varepsilon}{2} +\dfrac{\varepsilon}{6} + \dfrac{\varepsilon}{6}\leq \dfrac{\varepsilon}{2},
\end{align*}
where we recall the particular choices of $T$ and $\varepsilon<\varepsilon_0$ in \eqref{eq:def_T} and \eqref{eq:def_varepsilon_0}. This concludes the proof of \eqref{eq:AN2incluE}.

\paragraph{Step 3} We obtain with \eqref{eq:Ean}
 and Markov's inequality,
\begin{align*}
\mathbf{P}\left( E(t_\varepsilon,T_N)\right)&\geq  \mathbf{P}\left( E(t_\varepsilon,t_\varepsilon+T)\right)^{a_N}\geq \mathbf{P}(A_2^N(\varepsilon))^{a_N}\\
&=\left( 1 - \mathbf{P}\left( \sup_{t\in [t_\varepsilon,t_\varepsilon+T]} \left\Vert \zeta_N(t_\varepsilon, t) \right\Vert_2 >\dfrac{\varepsilon}{18}\right) \right)^{a_N}\\
&\geq \left( 1 -  18^{2m'}\dfrac{\mathbb{E}\left[ \sup_{t\in [t_\varepsilon,t_\varepsilon+T]} \left\Vert \zeta_N(t_\varepsilon, t) \right\Vert_2^{2m'}\right] }{\varepsilon^{2m'}}\right)^{a_N},
\end{align*}
where we have taken $m'>m$.
With Proposition \ref{prop:noise_perturbation}, it gives
$$\mathbf{P}\left( E(t_\varepsilon,T_N)\right)\geq \left(1-  \dfrac{C}{\left(\varepsilon^2N\rho_N\right)^{m'}}\right)^{a_N}=\exp\left( a_N \ln \left(1-  \dfrac{C}{\left(\varepsilon^2N\rho_N\right)^{m'}}\right)\right).$$
By definition \eqref{eq:def_TN}, $a_N=o\left(N\rho_N \right)^{m'}$, the right term tends to 1 as $N$ goes to $\infty$ under Hypothesis \ref{hyp:scenarios}. By \eqref{eq:PminE}, we conclude that
$$\mathbf{P}\left( \sup_{t\in [t_\varepsilon,T_N]} \left\Vert X_N(t) - X_\infty\right\Vert_2\leq\varepsilon\right) \xrightarrow[N\to\infty]{}1.$$
This concludes the proof of Theorem~\ref{thm:long_time}.
\end{proof}

\section{Proofs - Noise perturbation}\label{S:proof_NP}
In this section, we prove Proposition~\ref{prop:noise_perturbation} concerning the control of the noise perturbation $\zeta_N(t_0,t)$ defined in \eqref{eq:def_zeta_N}. For simplicity of notation, we assume that $t_0=0$. Recall the expression of $\left(Z_{N,j}\right)_{1\leq j \leq N}$ in \eqref{eq:def_ZiN}. Introduce the compensated measure $\tilde{\pi}_j(ds,dz):=\pi_j(ds,dz)-\lambda_{N,j}dsdz$, so that with the linearity of $(e^{t\mathcal{L}})_{t\geq 0}$, we obtain that $\zeta_N$ can be written as
\begin{equation}\label{eq:zeta_N_chi}
 \zeta_N(0,t) = \sum_{j=1}^N \int_0^t\int_0^\infty e^{(t-s)\mathcal{L}}\chi_j(s,z) \tilde{\pi}_j(ds,dz),
\end{equation}
with $\displaystyle \chi_j(s,z):=\left( \sum_{i=1}^N \mathbf{1}_{B_{N,i}} \dfrac{w_{ij}}{N} \mathbf{1}_{z\leq \lambda_{N,j}(s)}\right)$. The proof of Proposition \ref{prop:drift_term} relies on a adaptation of an argument given in \cite{Zhu2017} (Theorem 4.3), where a similar quantity to \eqref{eq:zeta_N_chi} is considered for $N=1$.

\subsection{Control of the moments of the process $Z_{N, i}$}

\begin{prop}\label{prop:control_mean_Zj^m}
 Let $m\geq 1$ and $T>0$.  Under Hypotheses \ref{hyp_globales} and \ref{hyp:scenarios}, $\mathbb{P}$-almost surely
$$\sup_{N\geq 1} \mathbf{E}\left[\dfrac{1}{N} \sum_{j=1}^N Z_{N,j}(T)^m \right] <\infty.$$
\end{prop}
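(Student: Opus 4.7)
The plan is to dominate the nonlinear Hawkes process $(Z_{N,j})$ pointwise by a linear multivariate Hawkes process $(\tilde Z_{N,j})$ with the same graph structure and driven by the same Poisson measures, then control moments of this linear surrogate via classical Volterra-resolvent arguments, using an $\mathbb{P}$-almost sure uniform bound on the graph degrees.

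\textbf{Linear domination.} Since $F$ is Lipschitz and $\eta$ is uniformly bounded by Hypothesis~\ref{hyp_globales}, one has $\lambda_{N, j}(t) \leq c_{0} + \Vert F\Vert_{L}\, X_{N, j}(t-)$ for some constant $c_{0}$. I would introduce $(\tilde Z_{N, j})$ defined as in \eqref{eq:def_ZiN} with the same Poisson measures $(\pi_{j})$ but with intensity
\[
\tilde \lambda_{N, j}(t) = c_{0} + \Vert F\Vert_{L}\, \tilde X_{N, j}(t-), \qquad \tilde X_{N, j}(t) = \sum_{k=1}^N \frac{w_{jk}^{(N)}}{N} \int_{0}^{t} h(t-s) \, d \tilde Z_{N, k}(s),
\]
and a standard pathwise coupling by induction over the jump times of the system yields $Z_{N, j}(t) \leq \tilde Z_{N, j}(t)$ almost surely. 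It then suffices to prove the analogous moment bound for $(\tilde Z_{N, j})$.

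\textbf{Graph concentration and first moment.} Hypothesis~\ref{hyp:scenarios} implies $N\rho_{N} / \log N \to \infty$. Bernstein's inequality applied to the independent Bernoulli variables $\xi_{jk}^{(N)}$, a union bound over $j$, and Borel-Cantelli give that $\mathbb{P}$-a.s.
\[
D_{N} := \max_{1\leq j \leq N} \frac{1}{N \rho_{N}} \sum_{k=1}^N \xi_{jk}^{(N)} \leq D
\]
for some deterministic constant $D$ and all $N$ large enough. I fix such a realization. Denoting $\tilde \mu_{j}(t) := \mathbf{E}[\tilde \lambda_{N, j}(t)]$ and $\bar M(t) := \max_{j} \tilde \mu_{j}(t)$, taking expectations in the defining equation for $\tilde \lambda_{N, j}$ yields the scalar Volterra inequality $\bar M(t) \leq c_{0} + \Vert F\Vert_{L} D \,(h \ast \bar M)(t)$. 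As the kernel $\Vert F\Vert_{L} D\, h$ is nonnegative and in $L^{1}(\mathbb{R}_{+})$, its Volterra resolvent is finite on every $[0, T]$ and $\bar M$ is bounded there, which settles the case $m = 1$ since $\mathbf{E}[\tilde Z_{N, j}(T)] = \int_{0}^{T} \tilde \mu_{j}(s)\, ds$.

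\textbf{Higher moments and main obstacle.} For $m \geq 2$, I would proceed by induction on $m$: the jump/compensation formula applied to $z \mapsto z^m$ for the counting process gives
\[
\mathbf{E}[\tilde Z_{N, j}(T)^{m}] = \sum_{k=0}^{m-1} \binom{m}{k} \int_{0}^{T} \mathbf{E}\bigl[ \tilde Z_{N, j}(s)^{k}\, \tilde \lambda_{N, j}(s)\bigr] ds.
\]
Using $\tilde \lambda_{N, j} \leq c_{0} + \Vert F\Vert_{L} \tilde X_{N, j}$, averaging over $j$, and factoring out the graph contribution via the bound $D_{N} \leq D$ from Step~3, the control of $\Psi_{m}(t) := \frac{1}{N} \sum_{j} \mathbf{E}[\tilde Z_{N, j}(t)^{m}]$ reduces to another linear Volterra integral inequality whose source term is controlled by lower-order $\Psi_{k}$, $k \leq m-1$, after an application of Cauchy-Schwarz to split the mixed moments. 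A final resolvent argument closes the induction and yields $\sup_{N} \Psi_{m}(T) < \infty$. The main technical subtlety lies in managing the mixed moments $\mathbf{E}[\tilde Z_{N, j}(s)^{k} \tilde Z_{N, l}(u)]$ that arise from the interaction term $\tilde X_{N, j}$ without losing the averaging in $j$: this is precisely where the almost-sure concentration of the normalized degrees is crucial, as it prevents the random graph from spoiling the Volterra structure.
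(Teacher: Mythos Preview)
Your approach is correct but genuinely different from the paper's. The paper works directly with the nonlinear process: it writes $Z_{N,i}(T)=\bigl(Z_{N,i}(T)-\int_0^T\lambda_{N,i}\bigr)+\int_0^T\lambda_{N,i}$, applies Burkholder--Davis--Gundy to the martingale part, and reduces everything to controlling $\mathbf{E}\bigl[\int_0^T\lambda_{N,i}(t)^k\,dt\bigr]$ for $k=m/2$ and $k=m$; then Lipschitzness of $F$, discrete Jensen, and the almost-sure degree bound \eqref{eq:estimees_IC} yield a closed Gr\"onwall inequality for $\frac{1}{N}\sum_j\mathbf{E}[Z_{N,j}(t)^m]$ in terms of lower powers, and one iterates over dyadic exponents. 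Your route instead first dominates $(Z_{N,j})$ pathwise by a \emph{linear} multivariate Hawkes process via the thinning coupling, then runs the moment induction on the linear surrogate using the jump/compensation identity $\mathbf{E}[\tilde Z^m]=\sum_{k<m}\binom{m}{k}\int\mathbf{E}[\tilde Z^k\tilde\lambda]$ rather than BDG. Both arguments ultimately rely on the same two ingredients---the $\mathbb{P}$-a.s.\ uniform bound on normalized degrees (you need it in both directions, incidentally, cf.\ \eqref{eq:estimees_IC}) and a Gr\"onwall/resolvent step on $[0,T]$---and both produce the same recursive structure. Your linear domination is the classical Br\'emaud--Massouli\'e reduction and arguably cleaner conceptually (all inequalities point the right way once the coupling is in place); the paper's BDG approach is more direct and avoids constructing the auxiliary process. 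One small point: in your induction step the mixed term $\mathbf{E}[\tilde Z_{N,j}^{\,k}\tilde X_{N,j}]$ produces, after Young's inequality and averaging in $j$, a contribution of order $\Psi_{k+1}$, so for $k=m-1$ you do get $\int_0^t\Psi_m$ on the right-hand side---this is the Volterra term itself, not part of the source, but your phrasing already accounts for that.
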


\begin{proof}
Let $N\geq 1$. We have for any $i\in \llbracket 1,N\rrbracket$
\begin{align}
\mathbf{E} \left[ Z_{N,i}(T)^m \right] &\leq \mathbf{E} \left[ \left( \left( Z_{N,i}(T))-\int_0^T \lambda_{N,i}(t)dt\right) + \int_0^T \lambda_{N,i}(t)dt\right)^m \right] \notag\\
&\leq 2^{m-1}\mathbf{E} \left[ \left( Z_{N,i}(T)-\int_0^T \lambda_{N,i}(t)dt\right)^m \right] + 2^{m-1} \mathbf{E} \left[ \left(\int_0^T \lambda_{N,i}(t)dt\right)^m \right]\notag \\
&\leq 2^{m-1}C \mathbf{E} \left[ \left(\int_0^T \lambda_{N,i}(t)dt\right)^{\frac{m}{2}} \right] + (2T)^{m-1}  \mathbf{E} \left[ \int_0^T {\lambda_{N,i}}(t)^mdt \right], \label{eq:Z_iNT_m}
\end{align}
where we used Jensen's inequality and Burkholder-Davis-Gundy Inequality on the martingale $\left( Z_{N,i}(T)-\int_0^T \lambda_{N,i}(t)dt\right)$. Similarly, we obtain 
$$ \mathbf{E} \left[ \left(\int_0^T \lambda_{N,i}(t)dt\right)^{\frac{m}{2}} \right] \leq T^{\frac{m}{2}-1} \mathbf{E} \left[ \int_0^T \left(\lambda_{N,i}(t)\right)^{\frac{m}{2}}	dt \right].$$ We focus now on the term $ \mathbf{E} \left[ \int_0^T \lambda_{N,i}(t)^kdt \right]$ for $k\geq 1$. From the definition of $\lambda_{N,i}$ \eqref{eq:def_lambdaiN_intro}, by Lipschitz continuity of $F$ and with Jensen's inequality
\begin{multline*}
 \mathbf{E} \left[ \int_0^T \lambda_{N,i}(t)^kdt \right] \leq  2^{k-1} T F(0,\eta_t(x_i))^k\\ + 2^{k-1} \Vert F \Vert_L^k \mathbf{E} \left[\int_0^T \left( \dfrac{1}{N} \sum_{j=1}^N \int_0^{t-}w_{ij} e^{-\alpha(t-s)} dZ_{N,j}(s)\right)^k dt\right].
\end{multline*}
Let $S_i:=\sum_{j=1}^N \dfrac{w_{ij}}{N}$. By \eqref{eq:estimees_IC}, we have that $\mathbb{P}$-almost surely, $\limsup_{N\to\infty}\sup_{1\leq i \leq N} S_i \leq 2$. We obtain with discrete Jensen's inequality that for any $t\geq 0$
$$\left(\dfrac{1}{N} \sum_{j=1}^N \int_0^{t-}w_{ij} e^{-\alpha(t-s)}  dZ_{N,j}(s)\right)^k \leq S_i^k\left( \sum_{j=1}^N \dfrac{w_{ij}}{NS_i}Z_{N,j}(t)\right)^k\leq S_i^{k-1} \sum_{j=1}^N \dfrac{w_{ij}}{N} Z_{N,j}(t)^k.$$
We obtain then
$$\mathbf{E} \left[ \int_0^T \lambda_{N,i}(t)^kdt \right] \leq   C_{T,F,\eta_0,k} + C_{k,F} \sum_{j=1}^N \dfrac{w_{ij}}{N} \mathbf{E}\left[\int_0^T Z_{N,j}(t)^{k}dt\right],$$
thus, going back to \eqref{eq:Z_iNT_m}, with $C=C_{T,F,\eta_0,m}$ 
\begin{align*}
\mathbf{E}\left[\dfrac{1}{N} \sum_{j=1}^N Z_{N,j}(T)^m  \right] &\leq \dfrac{C}{N} \sum_{i=1}^N   \left(  \mathbf{E} \left[ \int_0^T \lambda_{N,i}(t)^{\frac{m}{2}}dt \right] + C \mathbf{E} \left[ \int_0^T \lambda_{N,i}(t)^mdt \right]\right)\\
&\leq C\left(1 +  \sum_{i,j=1}^N \dfrac{w_{ij}}{N^2} \mathbf{E}\left[\int_0^T Z_{N,j}(t)^{\frac{m}{2}}dt\right] +  \sum_{i,j=1}^N \dfrac{w_{ij}}{N^2} \mathbf{E}\left[\int_0^T Z_{N,j}(t)^mdt\right]\right).
\end{align*}
With \eqref{eq:estimees_IC}, it gives that, $\mathbb{P}$-almost surely for $N$ large enough
\begin{align*}
\mathbf{E}\left[\dfrac{1}{N} \sum_{j=1}^N Z_{N,j}(T)^m  \right] \leq  C\left(1 +  \int_0^T \mathbf{E}\left[\dfrac{1}{N}\sum_{j=1}^N Z_{N,j}(t)^{\frac{m}{2}}\right]dt +  \int_0^T\mathbf{E}\left[ \dfrac{1}{N}\sum_{j=1}^N Z_{N,j}(t)^m\right]dt\right).
\end{align*}
As for any $t\geq 0$
$$\mathbf{E}\left[\dfrac{1}{N}\sum_{i=1}^N Z_{N,i}(t)\right] =  \dfrac{1}{N}\sum_{i=1}^N \mathbf{E}\left[ \int_0^t \lambda_{N,i}(s)ds\right] \leq C_{T,\eta_0,F} +  C_{T,\eta_0,F} \int_0^t\mathbf{E}\left[ \dfrac{1}{N}\sum_{j=1}^N Z_{N,j}(s)\right]ds,$$  Gr{\"o}nwall's lemma  gives that $\displaystyle\sup_{t\leq T} \mathbf{E}\left[\dfrac{1}{N}\sum_{i=1}^N Z_{N,i}(t)\right]  <\infty$ (independently of $N$) and similarly an immediate iteration gives that for any $k\geq 0$, $ \displaystyle\sup_{N\geq 1}\mathbf{E} \left[ \dfrac{1}{N} \sum_{j=1}^N Z_{N,j}(T)^{2^k} \right]<\infty$ which concludes the proof.
\end{proof}
 
\subsection{Proof of Proposition \ref{prop:noise_perturbation}}
\begin{proof}
We divide the proof in different steps. Fix $m\geq 1$. We prove Proposition \ref{prop:noise_perturbation} for the choice $t_0=0$, but it remains the same for a general initial time $t_0\geq 0$.

\textit{Step 1 -} The functional $\phi:L^2(I)\to \mathbb{R}$ given by $\phi(v)=\Vert v \Vert_2^{2m}$ is of class $\mathcal{C}^2$ (recall that $\zeta_N\in L^2(I)$) so that by Itô formula on the expression \eqref{eq:zeta_N_chi} we obtain
\begin{align}\label{eq:zeta_spatial_def}
\phi\left(\zeta_N(t)\right)&= \int_0^t \phi'\left(\zeta_N(s)\right) \mathcal{L}\left(\zeta_N(s)\right)ds + \sum_{j=1}^N \int_0^t \int_0^\infty \phi'\left(\zeta_N(s-)\right)\chi_j(s,z)\tilde{\pi}_j(ds,dz) \notag\\+ &\sum_{j=1}^N\int_0^t\int_0^\infty \left[ \phi\left( \zeta_N(s-)+\chi_j(s,z)\right) - \phi\left(\zeta_N(s-)\right) - \phi'\left(\zeta_N(s-)\right)\chi_j(s,z)\right]\pi_j(ds,dz)\notag\\
&:= I_0(t) + I_1(t) + I_2(t).
\end{align}
We have then for any $v,h,k\in L^2(I)$, $\phi'(v)h=2m\Vert v \Vert_2^{2m-2}\text{Re}\left(\langle v,h\rangle\right)\in\mathbb{R}$ and $\phi''(v)(h,k)=2m(2m-1)\Vert v \Vert_2^{2m-4}\text{Re}\langle v,k \rangle \text{Re}\langle v,h \rangle +2m\Vert v \Vert^{2m-2} \text{Re}\langle h,k\rangle$.

\medskip
\textit{Step 2 -} We have $I_0(t)= \int_0^t 2m \Vert \zeta_N(s) \Vert_2^{2m-2}\text{Re}\left( \langle \zeta_N(s),\mathcal{L}(\zeta_N(s))\rangle \right)ds$.  From Proposition \ref{prop:operateur_L}, $\mathcal{L}$ generates a contraction semi-group hence for any $s\geq 0$, $\text{Re}\left( \langle \zeta_N(s),\mathcal{L}(\zeta_N(s))\right)\rangle\leq 0$ by Lumer-Philipps Theorem (see Section 1.4 of \cite{Pazy1974}). Then for any $t\geq 0$ we have $I_0(t)\leq 0$. 

\medskip
\textit{Step 3 -} About $I_1$ in \eqref{eq:zeta_spatial_def}, with $\alpha_j(s,z):=2m \Vert \zeta_N(s-)\Vert_2^{2m-2} \langle \zeta_N(s-),\chi_j(s,z)\rangle\in \mathbb{R}$,
$$ I_1(t)= \sum_{j=1}^N \int_0^t\int_0^\infty \alpha_j(s,z) \tilde{\pi}_j(ds,dz).$$ 
$I_1$ is then a real martingale. By Burkholder-David-Gundy inequality, there exists a constant $C>0$ such that for any $t\geq 0$:
$$\mathbf{E}\left[ \sup_{s\leq t} \left|I_1(s)\right|\right] \leq C \mathbf{E}\left[ \sqrt{[I_1]_t} \right],$$
where $[I_1]_t=\sum_{s\leq t} \left| \Delta I_1(s) \right|^2$ stands for the quadratic variation of $I_1$. It is computed as follows (as the $(\pi_j)_{1\leq j \leq N}$ are independent, there are almost surely no simultaneous jumps so that $[\tilde{\pi}_j,\tilde{\pi}_{j'}]=0$ if $j\neq j'$):
\begin{align*}
[I_1]_t
&=  \sum_{j=1}^N\int_{0}^t \int_0^\infty  \alpha_j(s,z)^2\pi_{j}(ds,dz)\\
&=  \sum_{j=1}^N\int_{0}^t \int_0^\infty  \left(2m \Vert \zeta_N(s-)\Vert_2^{2m-2} \langle \zeta_N(s-),\chi_j(s,z)\rangle\right)^2\pi_{j}(ds,dz)\\
&\leq 4m^2 \sup_{0\leq s \leq t} \left( \Vert \zeta_N(s)\Vert_2^{4m-2}\right) \sum_{j=1}^N\int_0^t\int_0^\infty \Vert \chi_j(s,z)\Vert_2^2 \pi_j(ds,dz).
\end{align*}
We obtain then 
$$\mathbf{E}\left[ \sqrt{[I_1]_t} \right] \leq 2m \mathbf{E}\left[ \sup_{0\leq s \leq t} \left( \Vert \zeta_N(s)\Vert_2^{2m-1}\right) \left(\sum_{j=1}^N\int_0^t\int_0^\infty \Vert \chi_j(s,z)\Vert_2^2 \pi_j(ds,dz)\right)^{\frac{1}{2}}\right].$$
Applying H{\"o}lder inequality with parameter $\frac{2m-1}{2m}+\frac{1}{2m}=1$ for the random variables $\sup_{0\leq s \leq t} \left( \Vert \zeta_N(s)\Vert_2^{2m-1}\right)$ and $ \left(\sum_{j=1}^N\int_0^t\int_0^\infty \Vert \chi_j(s,z)\Vert_2^2 \pi_j(ds,dz)\right)^{\frac{1}{2}}$, we obtain that $\mathbf{E}\left[ \sqrt{[I_1]_t} \right]$ is upper bounded by
$$ 2m \left( \mathbf{E}\left[ \sup_{0\leq s \leq t} \left( \Vert \zeta_N(s)\Vert_2^{2m}\right) \right]\right)^{\frac{2m-1}{2m}}
\left(\mathbf{E}\left[\left(\sum_{j=1}^N\int_0^t\int_0^\infty \Vert \chi_j(s,z)\Vert_2^2 \pi_j(ds,dz)\right)^m\right]\right) ^{\frac{1}{2m}}.$$
Let $\varepsilon>0$ to be chosen later. From Young's inequality, for any $a,b \geq 0$, we can write $ab=\left( \varepsilon^{\frac{2m-1}{2m}}a\right) \left( \varepsilon^{\frac{-(2m-1)}{2m}}b\right)\leq \frac{2m-1}{2m}\left( \varepsilon^{\frac{2m-1}{2m}}a\right)^{\frac{2m}{2m-1}}+\frac{1}{2m}\left( \varepsilon^{\frac{-(2m-1)}{2m}}b\right)^{2m}=\frac{2m-1}{2m}\varepsilon a^{\frac{2m}{2m-1}}+\frac{1}{2m}\varepsilon^{-(2m-1)}b^{2m}$. This gives for the choice $a=\left( \mathbf{E}\left[ \sup_{0\leq s \leq t} \left( \Vert \zeta_N(s)\Vert_2^{2m}\right) \right]\right)^{\frac{2m-1}{2m}}$ and $b=\left(\mathbf{E}\left[\left(\sum_{j=1}^N\int_0^t\int_0^\infty \Vert \chi_j(s,z)\Vert_2^2 \pi_j(ds,dz)\right)^m\right]\right) ^{\frac{1}{2m}}$:
\begin{multline*}
\mathbf{E}\left[\sqrt{ [I_1]_t} \right]  \leq  (2m-1)  \varepsilon \mathbf{E}\left[ \sup_{0\leq s \leq t} \left( \Vert \zeta_N(s)\Vert_2^{2m}\right) \right] \\+\varepsilon^{-(2m-1)} \mathbf{E}\left[\left(\sum_{j=1}^N\int_0^t\int_0^\infty \Vert \chi_j(s,z)\Vert_2^2 \pi_j(ds,dz)\right)^m\right].
\end{multline*}
We have then shown that, for the constant $C$ given by Burkholder-Davis-Gundy Inequality,
\begin{multline}\label{eq:spatial_I1}
\mathbf{E}\left[ \sup_{s\leq T} \left|I_1(s)\right|\right] \leq C (2m-1) \varepsilon \mathbf{E}\left[ \sup_{0\leq s \leq T} \left( \Vert \zeta_N(s)\Vert_2^{2m}\right) \right]\\ + C \varepsilon^{-(2m-1)}\mathbf{E}\left[\left(\sum_{j=1}^N\int_0^T\int_0^\infty \Vert \chi_j(s,z)\Vert_2^2 \pi_j(ds,dz)\right)^m\right].
\end{multline}

Let us focus now on $I_2$ in \eqref{eq:zeta_spatial_def}:
$$I_2(t)=\sum_{j=1}^N\int_0^t\int_0^\infty \left[ \phi\left( \zeta_N(s-)+\chi_j(s,z)\right) - \phi\left(\zeta_N(s-)\right) - \phi'\left(\zeta_N(s-)\right)\chi_j(s,z)\right]\pi_j(ds,dz).$$
For any jump $(s,z)$ of the Poisson measure $\pi_j$, from Taylor's Lagrange formula there exists $\tau_s\in (0,1)$ such that
\begin{multline*}
\phi\left( \zeta_N(s-)+\chi_j(s,z)\right) - \phi\left(\zeta_N(s-)\right) - \phi'\left(\zeta_N(s-)\right)\chi_j(s,z)\\= \dfrac{1}{2} \phi''\left(\zeta_N(s-)+\tau_s \chi_j(s,z)\right) \left( \chi_j(s,z),\chi_j(s,z) \right).
\end{multline*}
As $\phi''(v)(h,k)=2m(2m-1)\Vert v \Vert_2^{2m-4}\text{Re}\langle v,k \rangle \text{Re}\langle v,h \rangle +2m\Vert v \Vert^{2m-2} \text{Re}\langle h,k\rangle$ for any $v,h,k \in L^2(I)$, one has with Cauchy–Schwarz inequality that 
$$\phi''\left(\zeta_N(s-)+\tau_s \chi_j(s,z)\right) \left( \chi_j(s,z) \right)^2 \leq 4m^2 \Vert \zeta_N(s-)+\tau_s \chi_j(s,z) \Vert_2^{2m-2} \Vert \chi_j(s,z)\Vert_2^2.$$
But as $\Vert x+\tau y\Vert_2^2 \leq \max \left( \Vert x\Vert_2^2 , \Vert x+y \Vert_2^2\right)$ for any $x,y \in L^2(I)$ and $\tau \in (0,1)$, we have here
$$\Vert \zeta_N(s-)+\tau_s \chi_j(s,z) \Vert_2^{2m-2}  \leq \max\left( \Vert \zeta_N(s-)\Vert_2^{2m-2} , \Vert \zeta_N(s-)+ \chi_j(s,z) \Vert_2^{2m-2} \right),$$ and as
$\Vert \zeta_N(s-)\Vert_2^{2m-2} \leq \sup_{s\leq t} \Vert \zeta_N(s)\Vert_2^{2m-2}$ and $ \Vert \zeta_N(s-)+ \chi_j(s,z) \Vert_2^{2m-2} =  \Vert \zeta_N(s) \Vert_2^{2m-2} \leq \sup_{s\leq t} \Vert \zeta_N(s)\Vert_2^{2m-2}$, thus
$$\mathbf{E}\left[ \sup_{s\leq t}\vert I_2(s) \vert \right] \leq 2m^2   \mathbf{E}\left[ \sup_{s\leq t} \Vert \zeta_N(s)\Vert_2^{2m-2} \sum_{j=1}^N \int_0^t \int_0^\infty  \Vert \chi_j(s,z)\Vert_2^2\pi_j(ds,dz)\right].$$
We proceed now similarly as for $I_1$. From H{\"o}lder inequality, as $\frac{2m-2}{2m}+\frac{1}{m}=1$ we know that for any $A,B$ random non-negative variables,  $\mathbf{E}\left[AB\right] \leq \left( \mathbf{E}\left[A^{\frac{2m}{2m-2}}\right]\right)^{\frac{2m-2}{2m}} \left(  \mathbf{E}\left[ B^{m} \right]\right)^{\frac{1}{m}}$. It leads for the choice $A=\sup_{0\leq s \leq t} \left( \Vert \zeta_N(s)\Vert_2^{2m-2}\right)$ and $B  = \sum_{j=1}^N\int_0^t\int_0^\infty \Vert \chi_j(s,z)\Vert_2^2 \pi_j(ds,dz)$ to $\mathbf{E}\left[ \sup_{s\leq t}\vert I_2(s) \vert \right]$ equals to
$$2m^2   \left(\mathbf{E}\left[\sup_{0\leq s \leq t} \left( \Vert \zeta_N(s)\Vert_2^{2m}\right)\right]\right)^{\frac{2m-2}{2m}} \left(\mathbf{E}\left[\left(\sum_{j=1}^N\int_0^t\int_0^\infty \Vert \chi_j(s,z)\Vert_2^2 \pi_j(ds,dz)\right)^m\right]\right)^{\frac{1}{m}}.$$
With the same $\varepsilon$ introduced for $I_1$, from Young's inequality, for any $a,b \geq 0$, we can write $ab=\left( \varepsilon^{\frac{2m-2}{2m}}a\right) \left( \varepsilon^{\frac{-(2m-2)}{2m}}b\right)\leq \frac{2m-2}{2m}\left( \varepsilon^{\frac{2m-2}{2m}}a\right)^{\frac{2m}{2m-2}}+\frac{1}{m}\left( \varepsilon^{\frac{-(2m-2)}{2m}}b\right)^{m}=
\frac{2m-2}{2m}\varepsilon a^{\frac{2m}{2m-2}}+\frac{1}{m}\varepsilon^{-(2m-2)}b^{m}$. For the choice 
\begin{align*}
a&=\left( \mathbf{E}\left[ \sup_{0\leq s \leq t} \left( \Vert \zeta_N(s)\Vert_2^{2m}\right) \right]\right)^{\frac{2m-2}{2m}} \text{ and}\\
b&=\left(\mathbf{E}\left[\left(\sum_{j=1}^N\int_0^t\int_0^\infty \Vert \chi_j(s,z)\Vert_2^2 \pi_j(ds,dz)\right)^m\right]\right) ^{\frac{1}{m}},
\end{align*}
it gives that $\mathbf{E}\left[ \sup_{s\leq t}\vert I_2(s) \vert \right] $ is upper bounded by
\begin{align}\label{eq:spatial_I2}
& m(2m-2) \varepsilon \mathbf{E}\left[\sup_{0\leq s \leq t} \left( \Vert \zeta_N(s)\Vert_2^{2m}\right)\right] + 2m \varepsilon^{-(2m-2)}\mathbf{E}\left[\left(\sum_{j=1}^N\int_0^t\int_0^\infty \Vert \chi_j(s,z)\Vert_2^2 \pi_j(ds,dz)\right)^m\right].
\end{align}

Taking the expectation in \eqref{eq:zeta_spatial_def} and combining \eqref{eq:spatial_I1} and \eqref{eq:spatial_I2}, we obtain that
\begin{multline}\label{eq:spatial_zeta_maj_eps}
\mathbf{E}\left[\sup_{s\leq T} \Vert \zeta_N(s) \Vert_2^{2m}\right] \leq \varepsilon\left( C(2m-1)+m(2m-2) \right) \mathbf{E}\left[\sup_{0\leq s \leq T} \left( \Vert \zeta_N(s)\Vert_2^{2m}\right)\right] \\
\quad +  \left( C \varepsilon^{-(2m-1)}+2m \varepsilon^{-(2m-2)}\right) \mathbf{E}\left[\left(\sum_{j=1}^N\int_0^T\int_0^\infty \Vert \chi_j(s,z)\Vert_2^2 \pi_j(ds,dz)\right)^m\right].
\end{multline}

\medskip
\textit{Step 4 -} We can now fix $\varepsilon$ such that $\varepsilon\left( C(2m-1)+m(2m-2) \right) \leq \frac{1}{2}$ so that \eqref{eq:spatial_zeta_maj_eps} leads to 
\begin{equation}\label{eq:spatial_zeta_maj}
\mathbf{E}\left[\sup_{s\leq T} \Vert \zeta_N(s) \Vert_2^{2m}\right] \leq 2 C\mathbf{E}\left[\left(\sum_{j=1}^N\int_0^T\int_0^\infty \Vert \chi_j(s,z)\Vert_2^2 \pi_j(ds,dz)\right)^m\right],
\end{equation}
where $C>0$ depends only on $m$.

\medskip
\textit{Step 5 -} Let $A_N:=\mathbf{E}\left[\left(\sum_{j=1}^N\int_0^T\int_0^\infty \Vert \chi_j(s,z)\Vert_2^2 \pi_j(ds,dz)\right)^m\right]$. We have 
$$\Vert\chi_j(s,z)\Vert_2^2 = \int_I \left( \sum_{i=1}^N \mathbf{1}_{B_{N,i}}(x) \dfrac{w_{ij}}{N}\mathbf{1}_{z\leq \lambda_{N,j}(s)}\right)^2 dx = \mathbf{1}_{z\leq \lambda_{N,j}(s)}  \sum_{i=1}^N\dfrac{\xi_{ij}}{N^3\rho_N^2},
$$
which leads to, with the definition of $Z_{N,j}$ in \eqref{eq:def_ZiN}
\begin{align*}
A_N &= \mathbf{E}\left[\left(\sum_{i,j=1}^N\int_0^T \int_0^\infty\mathbf{1}_{z\leq \lambda_{N,j}(s)}  \dfrac{\xi_{ij}}{N^3\rho_N^2}\pi_j(ds,dz)\right)^m\right]\\
&\leq \left(\dfrac{1}{N\rho_N}\right)^m\mathbf{E}\left[ \left( \sum_{i,j=1}^N\dfrac{\xi_{ij}}{N^2\rho_N}Z_{N,j}(T)\right)^m\right].
\end{align*}
With \eqref{eq:estimees_IC}, Jensen's discrete inequality and \eqref{eq:spatial_zeta_maj}, it leads to
\begin{align*}
A_N &\leq \left(\dfrac{1}{N\rho_N}\right)^m\mathbf{E}\left[ \left( \sum_{j=1}^N  \dfrac{1}{N} \left( \sup_j \sum_{i=1}^N \dfrac{\xi_{ij}}{N\rho_N} \right) Z_{N,j}(T)\right)^m\right]\\
&\leq  \dfrac{C}{\left(N\rho_N\right)^m } \mathbf{E}\left[\dfrac{1}{N} \sum_{j=1}^N Z_{N,j}(T)^m  \right],
\end{align*}
hence the result with Proposition \ref{prop:control_mean_Zj^m}.
\end{proof}

\section{Proofs - Drift term}\label{S:proof_drift}

In this section, we prove Proposition~\ref{prop:drift_term} concerning the control of the drift term perturbation $\phi_N(t_0,t)$ defined in \eqref{eq:def_phi_N}.
\subsection{Notation}

We introduce the following auxiliary functions in $L^2(I)$:
\begin{align}
\Theta_{t,i,1} &:= \dfrac{1}{N\rho_N} \sum_{j=1}^N  \left(\xi_{ij}^{(N)}-\rho_NW(x_i,x_j)\right) F(X_{N,j }(t),\eta_t(x_j)),\label{eq:def_Theta1}\\
\Theta_{t,i,2} &:= \dfrac{1}{N} \sum_{j=1}^N  W(x_i,x_j)F(X_{N,j}(t),\eta_t(x_j)) - \int_I W(x_i,y)F(X_N(t,y),\eta_t(y))dy, \label{eq:def_Theta2}\\
\Theta_{t,i,3}(x) &:=  \int_I \left( W(x_i,y) - W(x,y) \right)F(X_N(t,y),\eta_t(y))dy, \label{eq:def_Theta3}.
\end{align}
From the expression of $r_N$ in \eqref{eq:def_r_N}, we have then 
\begin{equation}\label{eq:def_r_N_aux}
r_N(t)=\sum_{i=1}^N \left( \sum_{k=1}^3 \Theta_{t,i,k} \right)\mathbf{1}_{B_{N,i}} \\+ T_W\left( g_N(t) \right),
\end{equation}
and we can divide $\phi_N$ defined in \eqref{eq:def_phi_N} in several terms $\displaystyle \phi_N(t)=\sum_{k=0}^3 \phi_{N,k}(t)$ with
\begin{align}
\phi_{N,0}(t)&:= \int_{t_0}^t e^{(t-s)\mathcal{L}}T_W\left(g_N(s)\right)ds,\label{eq:def_phi_n0}\\
\phi_{N,k}(t)&:= \int_{t_0}^t e^{(t-s)\mathcal{L}}\sum_{i=1}^N \dfrac{1}{N} \Theta_{s,i,k}\mathbf{1}_{B_{N,i}}ds\quad \text{for }k\in \llbracket 1,3 \rrbracket, \label{eq:def_phi_nk}.
\end{align}

\subsection{Preliminary results}

\begin{lem}\label{lem:tilde_YN_control}
Denoting by $\tilde{Y}_N(s)(v):= Y_N(s)\left(\dfrac{\lceil Nv \rceil}{N}\right)$, we have 
\begin{equation}\label{eq:tilde_YN_control}
\sup_{s\geq 0} \Vert \tilde{Y}_N(s) - Y_N(s)\Vert_2 \xrightarrow[N\to\infty]{}0.
\end{equation}
\end{lem}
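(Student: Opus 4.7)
The key observation is that the difference $\tilde Y_N(s) - Y_N(s)$ is actually independent of the time $s$, so the supremum over $s$ is harmless. Indeed, by the definition \eqref{eq:def_UN}, $X_N(s)$ is piecewise constant on the intervals $B_{N,i} = ((i-1)/N, i/N]$; for $v \in B_{N,i}$ we have both $X_N(s)(v) = X_{N,i}(s)$ and $\lceil Nv\rceil/N = i/N \in B_{N,i}$, so $X_N(s)(\lceil Nv\rceil/N) = X_{N,i}(s) = X_N(s)(v)$. Subtracting $X_\infty$ componentwise and cancelling the $X_N$ contributions then yields, for every $s\ge 0$ and almost every $v\in I$,
\begin{equation*}
\tilde Y_N(s)(v) - Y_N(s)(v) \;=\; X_\infty(v) - X_\infty\!\left(\tfrac{\lceil Nv\rceil}{N}\right),
\end{equation*}
which no longer depends on $s$.

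The problem therefore reduces to showing $\|X_\infty - X_\infty(\lceil N\,\cdot\,\rceil/N)\|_2 \to 0$. By Theorem~\ref{thm:large_time_cvg_u_t}, $X_\infty$ is continuous on the compact set $I=[0,1]$, hence uniformly continuous. Since $|v - \lceil Nv\rceil/N| \le 1/N$, the modulus of continuity $\omega_{X_\infty}(1/N)$ tends to $0$ as $N\to\infty$, and
\begin{equation*}
\sup_{s\ge 0} \Vert \tilde Y_N(s) - Y_N(s)\Vert_2
\;\le\; \sup_{v\in I}\bigl|X_\infty(v) - X_\infty(\lceil Nv\rceil/N)\bigr|
\;\le\; \omega_{X_\infty}(1/N) \;\xrightarrow[N\to\infty]{}\; 0,
\end{equation*}
which is \eqref{eq:tilde_YN_control}.

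There is no real obstacle here: the lemma is essentially a consequence of the piecewise-constant structure of $X_N$ combined with the uniform continuity of the limiting profile $X_\infty$ (already established in the proof of Theorem~\ref{thm:large_time_cvg_u_t} using the uniform continuity of $W$). The only mildly delicate point is to remark explicitly that the stochastic and time-dependent part of $Y_N$ drops out of the difference, so that the convergence is in fact deterministic and uniform in $s$.
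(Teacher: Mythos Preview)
Your proof is correct and follows essentially the same approach as the paper's: both arguments observe that the $X_N$ contributions cancel on each $B_{N,i}$ (since $X_N$ is piecewise constant there and $\lceil Nv\rceil/N=x_i$ for $v\in B_{N,i}$), reducing the difference to $X_\infty(v)-X_\infty(x_i)$, and then conclude via the uniform continuity of $X_\infty$ established in Theorem~\ref{thm:large_time_cvg_u_t}. Your presentation makes the time-independence of the difference slightly more explicit, and you bound the $L^2$ norm by the sup norm rather than computing it as a sum of integrals over the $B_{N,j}$, but these are cosmetic differences.
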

\begin{proof} A direct computation gives, for any $s\geq 0$,
\begin{align*}
\Vert \tilde{Y}_N(s) - Y_N(s)\Vert_2^2
&=\sum_j \int_{B_{N,j}} \left( X_{N,j}(s)-X_\infty(x_j)-X_N(s)(y)+X_\infty(y)\right)^2dy.
\end{align*}
By definition of $X_N(s)$ in \eqref{eq:def_UN}, $X_N=X_{N,j}$ on $B_{N,j}$ hence using Theorem \ref{thm:large_time_cvg_u_t}
$$\Vert \tilde{Y}_N(s) - Y_N(s)\Vert_2^2=\sum_j \int_{B_{N,j}} \left(X_\infty(y) -X_\infty(x_j)\right)^2dy.$$ 
Then \eqref{eq:tilde_YN_control} is a straightforward consequence of the uniform continuity of $X_\infty$ on the compact $I$ (see Theorem \ref{thm:large_time_cvg_u_t}). It still holds under the hypotheses of Section \ref{S:extension} by decomposing the sum on each interval $C_k$.
\end{proof}

We will often use
\begin{equation}\label{eq:tilde_YN_maj}
\dfrac{1}{N} \sum_{j=1}^N\left\vert Y_N(s)(x_j)\right\vert^2 =\Vert \tilde{Y}_N(s)\Vert_2^2\leq \dfrac{1}{2} \left(1+\Vert \tilde{Y}_N(s)\Vert_2^4\right)\leq  \dfrac{1}{2} \left(2+\Vert Y_N(s)\Vert_2^4\right),
\end{equation}
the last inequality being true for $N$ large enough (independently of $s$) using Lemma \ref{lem:tilde_YN_control}.

\begin{lem}\label{lem:control_Rnk}
Under Hypothesis \ref{hyp_globales},
\begin{equation}
R^W_{N,k}\xrightarrow[N\to\infty]{}0, \quad k\in \{1,2\}, \quad S^W_{N}\xrightarrow[N\to\infty]{}0,
\end{equation}
where $R_{N,k}^W$ and $S_N^W$ are respectively defined in \eqref{eq:def_Rnk} and \eqref{eq:def_Sn}.
\end{lem}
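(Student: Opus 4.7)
The plan is to reduce everything to the uniform continuity of $W$ on the compact set $I^{2}$, which is granted by Hypothesis~\ref{hyp_globales}. Fix $\varepsilon>0$; then there exists $\eta>0$ such that $|W(x,y)-W(x',y')|<\varepsilon$ whenever $|x-x'|+|y-y'|<\eta$. Choose $N$ large enough so that $\frac{1}{N}<\eta$. This is the single quantitative input behind both bounds, and (together with the easy rectangle decomposition of $I$ into the $B_{N,i}$'s) should be enough.

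For $R^{W}_{N,k}$, I would observe that for every $j$ and every $y\in B_{N,j}$ one has $|y-x_{j}|\leq \frac{1}{N}<\eta$, so $|W(x_{i},x_{j})-W(x_{i},y)|^{k}<\varepsilon^{k}$ uniformly in $i$. Integrating and summing gives
\begin{equation*}
R^{W}_{N,k}\leq \frac{1}{N}\sum_{i,j=1}^{N}\int_{B_{N,j}}\varepsilon^{k}\,dy=\frac{1}{N}\cdot N\cdot N\cdot\frac{\varepsilon^{k}}{N}=\varepsilon^{k},
\end{equation*}
since $|B_{N,j}|=\frac{1}{N}$. Letting $\varepsilon\to 0$ after $N\to\infty$ yields $R^{W}_{N,k}\to 0$.

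For $S^{W}_{N}$, the argument is completely analogous but in the first variable of $W$: for $x\in B_{N,i}$ one has $|x-x_{i}|\leq \frac{1}{N}<\eta$, so uniform continuity gives $|W(x_{i},y)-W(x,y)|^{2}<\varepsilon^{2}$ for every $y\in I$. Then
\begin{equation*}
S^{W}_{N}\leq \sum_{i=1}^{N}\int_{B_{N,i}}\!\int_{I}\varepsilon^{2}\,dy\,dx=\varepsilon^{2}\sum_{i=1}^{N}|B_{N,i}|=\varepsilon^{2},
\end{equation*}
and again $S^{W}_{N}\to 0$.

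There is no real obstacle here: the only thing to check carefully is that the $\frac{1}{N}$-scale of the partition ($|B_{N,i}|=\frac{1}{N}$) dominates the modulus of continuity, and the factors $N$ coming from the summations are exactly compensated by the factors $\frac{1}{N}$ in front and in $|B_{N,j}|$. The same argument, applied separately on each closed sub-interval of a finite partition of $I$, also gives the extension mentioned in Section~\ref{S:extension}, where $W$ is merely piecewise uniformly continuous and the diagonal terms where $x_{i}$ and $x$ (resp.\ $x_{j}$ and $y$) lie in different pieces contribute an asymptotically negligible boundary term of order $\frac{1}{N}$.
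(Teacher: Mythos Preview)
Your proof is correct and follows essentially the same approach as the paper: both reduce everything to the uniform continuity of $W$ on the compact $I^2$, pick $N$ large enough that $1/N$ is below the modulus of continuity, and bound the integrands pointwise by $\varepsilon^k$ (resp.\ $\varepsilon^2$). Your write-up is slightly more detailed in tracking the factors of $N$, and your remark on the piecewise extension of Section~\ref{S:extension} is accurate.
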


\begin{proof}
Fix $\varepsilon>0$. As $W$ is uniformly continuous on $I$, there exists $\eta>0$ such that $\vert W(x,y) - W(x,z)\vert \leq \epsilon$ for any $(x,y,z)\in I^3$ with $\vert y-z\vert \leq \eta$. Then, for $N$ large enough (such that $\frac{1}{N}\leq \eta$, we have directly that $R_{N,1}^W\leq \epsilon$ and $R_{N,2}^W\leq \epsilon$ hence the result. We can do the same for $S_N^W$.
\end{proof}

\begin{lem}\label{lem:drift_term_quadratic}
Under Hypothesis \ref{hyp_globales}, for any $t> t_0\geq 0$, 
\begin{equation}\label{eq:maj_phi_N0}
\left\Vert \phi_{N,0}(t) \right\Vert_2\leq C_{F,W} \int_{t_0}^t e^{-\gamma(t-s)} \left( \Vert Y_N(s) \Vert_2^2 + \delta_s + \delta_s^2\right)ds.
\end{equation}
\end{lem}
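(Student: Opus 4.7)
The plan is to combine the contraction estimate \eqref{eq:contraction_sg} for the semigroup $(e^{t\mathcal{L}})$ with an $L^1\to L^\infty$ bound for $T_W$ and a pointwise Taylor-type bound for the integrand $g_N(s)$. The crucial observation, which drives the choice of norms, is that the leading nonlinear term in $g_N$ is quadratic in $Y_N(s)$; we must therefore control $T_W(g_N(s))$ in $L^2$ by $\|g_N(s)\|_1$ (which gives $\|Y_N(s)\|_2^2$) rather than by $\|g_N(s)\|_2$ (which would produce the too-strong $\|Y_N(s)\|_4^2$).

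First I would apply the triangle inequality to the Bochner integral defining $\phi_{N,0}$ in \eqref{eq:def_phi_n0} and use Proposition \ref{prop:operateur_L} to get
\begin{equation*}
\|\phi_{N,0}(t)\|_2 \;\leq\; \int_{t_0}^t \bigl\| e^{(t-s)\mathcal{L}} T_W(g_N(s))\bigr\|_2\, ds \;\leq\; \int_{t_0}^t e^{-\gamma(t-s)}\, \|T_W(g_N(s))\|_2\, ds.
\end{equation*}
Next, since $W$ is bounded on $I^2$ (by $1$) and $I$ has Lebesgue measure $1$, a direct estimate gives $\|T_W g\|_2 \le \|T_W g\|_\infty \le \|W\|_\infty \|g\|_1$, so it remains to bound $\|g_N(s)\|_1$.

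For that, I would use the explicit expression \eqref{eq:def_gN(s)} together with the uniform bounds on $\partial_x^2 F$, $\partial_\eta^2 F$, $\partial^2_{x,\eta} F$ and $\partial_\eta F$ from Hypothesis \ref{hyp_globales}, and the pointwise estimate $|\eta_s(y)-\eta_\infty(y)| \leq \delta_s$ from \eqref{eq:def_delta_s}. This yields, pointwise in $y$,
\begin{equation*}
|g_N(s)(y)| \;\leq\; C_F\bigl(\,Y_N(s)(y)^2 + \delta_s^2 + |Y_N(s)(y)|\,\delta_s + \delta_s\,\bigr).
\end{equation*}
Integrating over $I$ and using $\|Y_N(s)\|_1 \leq \|Y_N(s)\|_2$ (Cauchy--Schwarz on the unit interval) followed by Young's inequality $\delta_s \|Y_N(s)\|_2 \leq \tfrac12(\delta_s^2 + \|Y_N(s)\|_2^2)$, one obtains
\begin{equation*}
\|g_N(s)\|_1 \;\leq\; C_F\bigl(\|Y_N(s)\|_2^2 + \delta_s^2 + \delta_s\bigr).
\end{equation*}

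Combining the three bounds yields exactly \eqref{eq:maj_phi_N0} with constant $C_{F,W} = \|W\|_\infty C_F$. There is no real obstacle here; the only subtle point is the choice to estimate $\|T_W g\|_2$ through the $L^\infty \hookrightarrow L^2$ embedding and the $L^1 \to L^\infty$ mapping property of $T_W$, which is what allows the quadratic remainder in the Taylor expansion to appear as $\|Y_N(s)\|_2^2$ rather than in a stronger norm. This is essential for closing the Gr\"onwall-type argument used in the proof of Theorem \ref{thm:long_time}.
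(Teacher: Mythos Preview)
Your proof is correct and follows essentially the same approach as the paper: both apply the semigroup contraction \eqref{eq:contraction_sg}, bound $g_N(s)$ pointwise via the Taylor expansion, and control $\|T_W g_N(s)\|_2$ through the $L^1$-norm of $g_N(s)$ using the boundedness of $W$. The only presentational difference is that you make the chain $\|T_W g\|_2 \le \|T_W g\|_\infty \le \|W\|_\infty \|g\|_1$ explicit, whereas the paper bounds $\|T_W g_N(s)\|_2^2$ directly (implicitly using the same estimate), and you handle the cross term $\delta_s\|Y_N(s)\|_2$ via Young's inequality while the paper uses $\|Y_N(s)\|_2 \le \tfrac12(1+\|Y_N(s)\|_2^2)$ together with $\sup_s \delta_s < \infty$; both routes are equivalent.
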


\begin{proof}[Proof of Lemma \ref{lem:drift_term_quadratic}]
By Proposition \ref{prop:operateur_L} we have $\left\Vert \phi_{N,0}(t)\right\Vert_2\leq  \int_{t_0}^t e^{-\gamma(t-s)} \left\Vert T_W g_N(s)  \right\Vert_2ds.$
As for any $x\in I$,
$\left\vert T_W g_N(s) (x) \right\vert \leq \int_I W(x,y) \left\vert g_N(s)(y)\right\vert dy,
$ and as
\begin{align*}
\vert g_N(s)(y)\vert &\leq \left\Vert \partial_x^2 F \right\Vert_\infty Y_N(t)(y)^2 + \left\Vert \partial_\eta^2 F\right\Vert_\infty \left\vert\eta_t(y)-\eta_\infty(y)\right\vert^2\\
&\quad+2\left\Vert  \partial_{x,\eta}^2 F \right\Vert_\infty \left\vert Y_N(t)(y) \right\vert \left\vert \eta_t(y)-\eta_\infty(y)\right\vert + \left\Vert \partial_\eta F\right\Vert_\infty \left\vert \eta_t(y)-\eta_\infty(y)\right\vert,
\end{align*}
with Hypothesis \ref{hyp_globales} it gives
\begin{align*}
\Vert T_W g_N(s) \Vert_2^2 &= \int_I \left( \int_I W(x,y) g_N(s)(y) dy\right)^2 dx\\
&\leq C_F \int_I \left( \int_I W(x,y) \left( Y_N(s)(y)^2 + \delta_s^2+Y_N(s)(y)\delta_s+\delta_s\right)dy\right)^2 dx\\
&\leq C_{F,W} \left( \Vert Y_N(s) \Vert_2^4 + \Vert Y_N(s) \Vert_2^2 \delta_s^2 + \delta_s^2 + \delta_s^4\right)\\
&\leq C_{F,W}\left( \dfrac{3}{2} \Vert Y_N(s) \Vert_2^4 + \dfrac{3}{2} \delta_s^2 + \delta_s^4\right)
\end{align*}
as $W$ is bounded. We obtain then, as $\sqrt{a+b}\leq \sqrt{a}+\sqrt{b}$,
$$\left\Vert \phi_{N,0}(t) \right\Vert_2\leq C_{F,W} \int_{t_0}^t e^{-\gamma(t-s)} \left( \Vert Y_N(s) \Vert_2^2 + \Vert Y_N(s) \Vert_2 \delta_s + \delta_s + \delta_s^2\right)ds.$$
Then \eqref{eq:maj_phi_N0} follows as $\left\Vert Y_N(s) \right\Vert_2 \leq \dfrac{1}{2} \left(1+\left\Vert Y_N(s)\right\Vert_2^2\right)$ and $\sup_s \delta_s <\infty$.
\end{proof}

\begin{lem}\label{lem:drift_term_phi_N1}
Under Hypotheses \ref{hyp_globales} and \ref{hyp:scenarios}, $\mathbb{P}$-almost surely for $N$ large enough and for any $t> t_0\geq 0$,
\begin{equation}\label{eq:maj_phi_N1}
\Vert \phi_{N,1}(t)\Vert_2\leq C_F \int_{t_0}^t e^{-(t-s)\gamma} \left\Vert Y_N(s)\right\Vert_2^2ds + G_{N,1},
\end{equation}
where  $G_{N,1}=G_{N,1}(\xi)$ is explicit in $N$ and tends to 0 as $N\to\infty	$. Moreover, if we suppose $F$ bounded, we have a better bound
\begin{equation}\label{eq:maj_phi_N1_B}
\sup_{t>0}\left\Vert \phi_{N,1}(t) \right\Vert_2\leq   \dfrac{C_{F}}{\sqrt{N\rho_N^2}}.
\end{equation}
\end{lem}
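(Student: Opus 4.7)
The starting point is the contraction bound $\Vert e^{t\mathcal{L}} g\Vert_2\leq e^{-\gamma t}\Vert g\Vert_2$ from Proposition~\ref{prop:operateur_L}, which reduces the problem to controlling $\Vert\sum_{i}\Theta_{s,i,1}\mathbf{1}_{B_{N,i}}\Vert_2 = (\frac{1}{N}\sum_{i}\Theta_{s,i,1}^2)^{1/2}$ and then integrating against $e^{-\gamma(t-s)}$. The key step is the decomposition $\Theta_{s,i,1} = A_i + B_{s,i}$, obtained by inserting $\pm F(X_\infty(x_j),\eta_\infty(x_j))$ inside the sum defining $\Theta_{s,i,1}$: $A_i$ then carries the limiting value (and so depends only on the graph, not on $s$ or $\omega$), while $B_{s,i}$ absorbs the Lipschitz deviation $F(X_{N,j}(s),\eta_s(x_j)) - F(X_\infty(x_j),\eta_\infty(x_j))$. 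The bound $(A_i+B_{s,i})^2\leq 2A_i^2+2B_{s,i}^2$ then splits $\frac{1}{N}\sum_i\Theta_{s,i,1}^2$ into two pieces to be handled separately.

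For the time-independent part $A_i$, the centered independent variables $\zeta_{ij}:=\xi_{ij}^{(N)}-\rho_N W(x_i,x_j)$ have variance at most $\rho_N$ and appear against the bounded deterministic coefficients $F(X_\infty(x_j),\eta_\infty(x_j))$, so a direct variance computation gives $\mathbb{E}[\frac{1}{N}\sum_i A_i^2]\leq C/(N\rho_N)$; a fourth-moment estimate combined with Borel--Cantelli under Hypothesis~\ref{hyp:scenarios} yields $\mathbb{P}$-a.s.\ $(\frac{1}{N}\sum_i A_i^2)^{1/2}\leq G_{N,1}^{(a)}(\xi)$ with $G_{N,1}^{(a)}\to 0$, contributing $G_{N,1}^{(a)}/\gamma$ to $G_{N,1}$. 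For the time-dependent part $B_{s,i}$, the Lipschitz property gives $|F(X_{N,j}(s),\eta_s(x_j))-F(X_\infty(x_j),\eta_\infty(x_j))|\leq \Vert F\Vert_L(|Y_N(s)(x_j)|+\delta_s)$; the goal is to combine this with a suitably sharp concentration estimate on the centered adjacency matrix $(\zeta_{ij})$ to obtain $\frac{1}{N}\sum_i B_{s,i}^2 \leq \frac{C\Vert F\Vert_L^2}{N\rho_N}\bigl(\Vert\tilde Y_N(s)\Vert_2^2+\delta_s^2\bigr)$, where $\tilde Y_N$ is the piecewise-constant interpolation from Lemma~\ref{lem:tilde_YN_control}. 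Taking square roots, using $\Vert\tilde Y_N\Vert_2\leq \tfrac{1}{2}(1+\Vert\tilde Y_N\Vert_2^2)$ to upgrade the linear dependence to a quadratic one, absorbing $\sup_s\Vert\tilde Y_N(s)-Y_N(s)\Vert_2\to 0$ from Lemma~\ref{lem:tilde_YN_control} and the integrable $\delta_s$ terms into $G_{N,1}$, and subsuming the vanishing prefactor $\sqrt{1/(N\rho_N)}$ into the constant $C_F$, yields the announced inequality.

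For the bounded $F$ case the Lipschitz decomposition is unnecessary: a direct variance computation gives $\mathbb{E}[\frac{1}{N}\sum_i\Theta_{s,i,1}^2]\leq C\Vert F\Vert_\infty^2/(N\rho_N)$ pointwise in $s$, and a time discretization exploiting the continuity of $s\mapsto F(X_{N,j}(s),\eta_s(x_j))$ (coming from the Markov structure in the exponential case) combined with a higher-moment Markov inequality upgrades this pointwise estimate to the $\mathbb{P}$-a.s.\ uniform bound $\sup_s\Vert\sum_i\Theta_{s,i,1}\mathbf{1}_{B_{N,i}}\Vert_2\leq C/\sqrt{N\rho_N^2}$, whence $\sup_t\Vert\phi_{N,1}(t)\Vert_2\leq C_F/\sqrt{N\rho_N^2}$ follows from the semigroup bound and $\rho_N\leq 1$. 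The main technical hurdle throughout is the estimate on $\frac{1}{N}\sum_i B_{s,i}^2$: a naive row-wise Cauchy--Schwarz using only the uniform row-sum control \eqref{eq:estimees_IC} produces a prefactor $1/\rho_N$ that does not vanish for diluted graphs, so one is forced to exploit the off-diagonal cancellation in the quadratic form $\sum_i\zeta_{ij}\zeta_{ik}$ (equivalently, an operator-norm-type control on the centered adjacency matrix $(\zeta_{ij})$) in order to recover the correct $1/(N\rho_N)$ gain needed to turn the Lipschitz error into a bound with a fixed constant prefactor.
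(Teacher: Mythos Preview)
Your general (unbounded $F$) argument follows the same decomposition as the paper: split off the deterministic limiting value $F(X_\infty(x_j),\eta_\infty(x_j))$ so that one piece ($A_i$, the paper's $\gamma_{N,2}$) has deterministic coefficients and the other ($B_{s,i}$, the paper's $\gamma_{N,1}$) carries the Lipschitz remainder. The paper controls $A_i$ via an Azuma--Hoeffding martingale argument and $B_{s,i}$ via the off-diagonal quantity $S_N^{\max}=\sup_{j\neq j'}|\frac{1}{N}\sum_i\overline{\xi_{ij}}\,\overline{\xi_{ij'}}|$ bounded by Bernstein; your fourth-moment and operator-norm suggestions are reasonable alternatives. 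One slip: the prefactor you obtain for $B_{s,i}$ should be $1/(N\rho_N^2)$, not $1/(N\rho_N)$, since the two factors of $1/\rho_N$ in $\Theta_{s,i,1}$ survive while the operator-norm bound $\|A\|_{\mathrm{op}}\leq C\sqrt{N}$ kills only one factor of $N$.

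Your bounded-$F$ argument, however, has a genuine gap. The ``direct variance computation'' $\mathbb{E}[\frac{1}{N}\sum_i\Theta_{s,i,1}^2]\leq C\|F\|_\infty^2/(N\rho_N)$ implicitly treats the coefficients $F(X_{N,j}(s),\eta_s(x_j))$ as independent of the graph variables $\zeta_{ij}$, so that the cross terms $\mathbb{E}[\zeta_{ij}\zeta_{ik}]$ vanish for $j\neq k$. But $X_{N,j}(s)$ depends on the entire graph $\xi^{(N)}$ in a nontrivial way, so neither the variance identity nor any higher-moment version of it is available; the time discretization then has nothing to upgrade. The paper resolves this by a device you do not mention for this case: Grothendieck's inequality bounds $\frac{1}{N}\sum_i\Theta_{s,i,1}^2$ by a supremum of $\frac{1}{N^3\rho_N^2}\sum_{i,j,k}\overline{\xi_{ij}}\,\overline{\xi_{ik}}s_jt_k$ over all sign vectors $s,t\in\{\pm 1\}^N$, which decouples the estimate from the specific (graph-dependent) values $F(X_{N,j}(s),\eta_s(x_j))$. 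This supremum is then controlled by $N\|A\|_{\mathrm{op}}^2$ with $A=(\overline{\xi_{ij}})$, and a random-matrix tail bound gives $\|A\|_{\mathrm{op}}\leq C\sqrt{N}$ $\mathbb{P}$-a.s., yielding $\|\gamma_N(s)\|_2^2\leq C_F/(N\rho_N^2)$ uniformly in $s$. Ironically, this is precisely the ``operator-norm-type control'' you invoke for the $B_{s,i}$ term in the general case; it is also the right tool here, and replaces the variance-plus-discretization scheme entirely.
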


\begin{proof}[Proof of Lemma \ref{lem:drift_term_phi_N1}]
Proposition \ref{prop:operateur_L} gives that 
\begin{equation}\label{eq:lem_phi_N1_aux}
\Vert \phi_{N,1}(t)\Vert_2\leq K\int_{t_0}^t e^{-(t-s)\gamma} \Vert \gamma_N(s) \Vert_2ds
\end{equation} with
\begin{equation}
\label{eq:gammaN}
\gamma_N(s):=\sum_{i=1}^N \Theta_{i,s,1}\mathbf{1}_{B_{N,i}}=\sum_{i,j=1}^N \dfrac{1}{N\rho_N}  \overline{\xi_{ij}} F(X_{N,j}(s),\eta_s(x_j))\mathbf{1}_{B_{N,i}}.    
\end{equation}
where we have used the notation
\begin{equation}
\label{eq:overline_xi}
\overline{\xi_{ij}} =\xi_{ij}^{(N)}-W_N(x_i,x_j),     
\end{equation}
Forgetting about the term $F(X_{N,j}(s),\eta_s(x_j))$ in \eqref{eq:gammaN}, $\gamma_N$ is essentially an empirical mean of the independent centered variables $ \overline{\xi_{ij}}$ and thus should be small as $N\to\infty$. One difficulty here is that concentration bounds (e.g. Bernstein inequality) for weighted sums such as $\sum_j \overline{\xi_{ij}} u_{i,j}$ (for some deterministic fixed weight $u_{i,j}$) are not directly applicable, as $u_{i,j}=F(X_{N,j}(s),\eta_s(x_j))\mathbf{1}_{B_{N,i}}$ depends in a highly nontrivial way on the variables $\xi_{i,j}^{(N)}$ themselves. A strategy would be to use Grothendieck inequality (see Theorem~\ref{thm:grothendieck}). We refer here to \cite{Coppini2022,Coppini_Lucon_Poquet2022} where the use of such Grothendieck inequality (and extensions) has been implemented in a similar context of interacting diffusions on random graphs. However here, a supplementary difficulty lies in the fact that $F$ need not be bounded (recall that a particular example considered here concerns the linear case where $F(x, \eta)=x + \mu$). Hence the application of Grothendieck inequality is not straightforward when $F$ is unbounded. For this reason, we give below two different controls on $ \gamma_N$: a general one, without assuming that $F$ and a second (sharper) one, when $F$ is bounded (using Grothendieck inequality). In the first case, we get around the difficulty of unboundedness of $F$ by introducing $F(X_\infty(x_j),\eta_\infty(x_j))$ which is bounded, since $X_{\infty}$ is.

First begin with the general control on $\gamma_N$:
we can write
\begin{align}\label{eq:lem_phiN1_aux}
\gamma_N(s)&= \sum_{i,j=1}^N \dfrac{1}{N\rho_N}  \overline{\xi_{ij}} \left(F(X_{N,j}(s),\eta_s(x_j))-F(X_\infty(x_j),\eta_\infty(x_j))\right)\mathbf{1}_{B_{N,i}}\notag\\&\quad+ \sum_{i,j=1}^N \dfrac{1}{N\rho_N}  \overline{\xi_{ij}} F(X_\infty(x_j),\eta_\infty(x_j))\mathbf{1}_{B_{N,i}}=:\gamma_{N,1}(s) + \gamma_{N,2}(s).
\end{align}
Denoting by $\Delta F_j:=F(X_{N,j}(s),\eta_s(x_j))-F(X_\infty(x_j),\eta_\infty(x_j))$, we have, as $\langle \mathbf{1}_{B_{N,i}}, \mathbf{1}_{B_{N,i'}}\rangle= \dfrac{\mathbf{1}_{i=i'}}{N}$ and with  $\displaystyle S_{jj'}:= \dfrac{1}{N}\sum_{i=1}^N \overline{\xi_{ij}}~ \overline{\xi_{ij'}}$, $\displaystyle \left\Vert \gamma_{N,1}(s)\right\Vert_2^2 = \dfrac{1}{N^2\rho_N^2} \sum_{j,j'=1}^N \Delta F_j \Delta F_{j'} \dfrac{1}{N}S_{jj'}$. Define the following quantity $S_{N}^\text{max}:= \sup_{1\leq j\neq j'\leq N} \left\vert S_{jj'} \right\vert$. The purpose of Lemma \ref{lem:maj_S} is exactly to control $S_{N}^\text{max}$, see in particular \eqref{eq:maj_SNMAX}. We have 
\begin{align*}
\left\Vert \gamma_{N,1}(s)\right\Vert_2^2 &= \left(\dfrac{1}{N^2\rho_N^2} \sum_{j\neq j'=1}^N \Delta F_j \Delta F_{j'} \dfrac{S_{jj'}}{S_{N}^\text{max}}\right)S_{N}^\text{max} + \dfrac{1}{N^3\rho_N^2} \sum_{i,j=1}^N \Delta F_j^2 \overline{\xi_{ij}}^2\\
&\leq  S_{N}^\text{max} \left( \dfrac{1}{N\rho_N^2} \sum_{j=1}^N \left\vert \Delta F_j\right\vert^2\right) + \dfrac{1}{N^2\rho_N^2} \sum_{j=1}^N \Delta F_j^2.
\end{align*}
As $\left\vert \Delta F_j \right\Vert \leq \Vert F \Vert_L \left( \left\vert Y_N(s)(x_j)\right\vert + \delta_s\right)$, we obtain as $s\mapsto\delta_s$ is bounded
$$\left\Vert \gamma_{N,1}(s)\right\Vert_2^2 \leq C_F \left( \left\Vert \tilde{Y}_N(s)\right\Vert_2^2 +1\right)\left(\dfrac{S_{N}^\text{max}}{\rho_N^2} +\dfrac{1}{N\rho_N^2}\right),$$
hence as $\left\Vert Y_N(s) \right\Vert_2 \leq \dfrac{1}{2} \left(1+\left\Vert Y_N(s)\right\Vert_2^2\right)$ and using \eqref{eq:tilde_YN_maj},
\begin{equation}\label{eq:lem_phiN1_gamma1}
\left\Vert \gamma_{N,1}(s)\right\Vert_2^2 \leq C_F \left( \left\Vert Y_N(s)\right\Vert_2^4 +1\right)\left(\dfrac{S_{N}^\text{max}}{\rho_N^2} +\dfrac{1}{N\rho_N^2}\right).
\end{equation}

 For the second term of \eqref{eq:lem_phiN1_aux}, we have
\begin{align*}
\Vert \gamma_{N,2}(s)\Vert_2^2 &= \dfrac{1}{N} \sum_{i=1}^N \left( \dfrac{1}{N\rho_N} \sum_{j=1}^N  \overline{\xi_{ij}} F(X_\infty(x_j),\eta_\infty(x_j)) \right)^2\\
&= \dfrac{1}{N^3\rho_N^2} \sum_{i=1}^N \sum_{j,j'=1}^N   \overline{\xi_{ij}}~  \overline{\xi_{ij'}} F(X_\infty(x_j),\eta(x_j)) F(X_\infty(x_{j'}),\eta_\infty(x_{j'})).
\end{align*}
Let $\displaystyle \alpha_{i,j,j'}:=\dfrac{F(X_\infty(x_j),\eta_\infty(x_j)) F(X_\infty(x_{j'}),\eta_\infty(x_{j'}))}{\Vert F(X_\infty, \eta_\infty)\Vert_\infty^2}\in [0,1]$, $\displaystyle R_k:= \sum_{ \substack{i,j,j'=1\\j\neq j'}}^k \alpha_{i,j,j'}  \overline{\xi_{ij}} ~\overline{\xi_{ij'}}$, and $\mathcal{F}_k=\sigma\left( \xi_{ij}, 1 \leq i,j\leq k\right)$ (with respect to $\mathbb{P}$, i.e. the realisation of the graphs). We have then $\displaystyle\Vert \gamma_{N,2}(s)\Vert_2^2 =  \dfrac{C_{F,X_\infty}}{N^3\rho_N^2}  \sum_{i,j=1}^N \alpha_{i,j,j}  \overline{\xi_{ij}}^2 +\dfrac{C_{F,X_\infty}}{N^3\rho_N^2}  R_N \leq \dfrac{C_{F,X_\infty}}{N\rho_N^2} +\dfrac{C_{F,X_\infty}}{N^3\rho_N^2}  R_N$. We show next that $(R_N)$ is a martingale: for any $k\geq 1$ (note that $R_1=0$):
\begin{multline*}
\mathbb{E}\left[ R_{k+1} \vert \mathcal{F}_k\right] =\mathbb{E}\left[ \left.\sum_{ \substack{i,j,j'=1\\j\neq j'}}^{k+1} \alpha_{i,j,j'}  \overline{\xi_{ij}}~ \overline{\xi_{ij'}} \right\vert \mathcal{F}_k\right]
= R_k  + \mathbb{E}\left[ \left.\sum_{\substack{j,j'=1\\j\neq j'}}^{k} \alpha_{k+1,j,j'}  \overline{\xi_{k+1,j}} ~\overline{\xi_{k+1,j'}} \right\vert \mathcal{F}_k\right]\\+ \mathbb{E}\left[ \left.\sum_{i,j'=1}^{k} \alpha_{i,k+1,j'}  \overline{\xi_{i,k+1}}~ \overline{\xi_{ij'}} \right\vert \mathcal{F}_k\right]+
\mathbb{E}\left[ \left.\sum_{i,j=1}^{k} \alpha_{i,j,k+1}  \overline{\xi_{i,k+1}} ~\overline{\xi_{ij}} \right\vert \mathcal{F}_k\right] \\+ \mathbb{E}\left[ \left.\sum_{j=1}^{k} \left(\alpha_{k+1,k+1,j}+\alpha_{k+1,j,k+1}\right)  \overline{\xi_{k+1,k+1}} ~\overline{\xi_{k+1,j}} \right\vert \mathcal{F}_k\right]=R_k,
\end{multline*}
as $\mathbb{E}\left[\overline{\xi_{ij}} ~\overline{\xi_{ij'}} \vert \mathcal{F}_k\right]=0$ if $j\neq j'$ and at least one of the indexes $i,j,j'$ is equal to $k+1$ by independence of the family of random variables $\left(\xi_{ij}\right)_{i,j}$. Similarly, we have
\begin{align*}
\Delta R_k &= R_{k+1}-R_k=\sum_{\substack{j,j'=1\\j\neq j'}}^{k} \alpha_{k+1,j,j'}  \overline{\xi_{k+1,j}} ~\overline{\xi_{k+1,j'}}  +\sum_{\substack{1\leq i\leq k+1\\ 1\leq  j \leq k}} \left(\alpha_{i,j,k+1}+\alpha_{i,k+1,j'} \right)  \overline{\xi_{i,k+1}}~ \overline{\xi_{ij}} .
\end{align*}
As each $\vert\overline{\xi_{i,j}}\vert\leq 1$ and $\vert\alpha_{i,j,k}\vert\leq 1$, it gives $\vert\Delta R_k \vert\leq 3k^2+k$. Theorem \ref{thm:ineg_AZ-HO} gives then that
\begin{align*}
\mathbb{P}\left( \left\vert \dfrac{C_{F,X_\infty}}{N^3\rho_N^2}  R_N \right\vert \geq x \right) &= \mathbb{P}\left( \vert R_N \vert \geq  \dfrac{xN^3\rho_N^2}{C_{F,X_\infty}} \right)\\
&\leq 2 \exp \left( -\dfrac{\left( \dfrac{xN^3\rho_N^2}{C_{F,X_\infty}}\right)^2}{2\sum_{k=1}^N \left(3k^2+k\right)^2} \right)\\
&=  2 \exp \left(- \dfrac{x^2 N^6\rho_N^4}{C_{F,X_\infty}^2P(N)}\right),
\end{align*}
with $\displaystyle P(N)= 2 N (N+1) \left( \dfrac{9}{5}\left(N+\dfrac{1}{2}\right)\left(N^2+N-\dfrac{1}{3}\right)+\dfrac{3N(N+1)}{2}+\dfrac{2N+1}{6}\right) \sim_{N\to\infty} \dfrac{18}{5} N^5$. For the choice $x^2= \dfrac{C_{F,X_\infty}^2P(N)}{N^{6-2\tau}\rho_N^4}$ with $\tau$ in \eqref{eq:dilution}, ($x^2\propto \dfrac{1}{N^{1-2\tau}\rho_N^4}$) it gives
$$ \mathbb{P}\left( \left\vert \dfrac{C_{F,X_\infty}}{N^3}  R_N \right\vert \geq \sqrt{\dfrac{C_{F,X_\infty}^2P(N)}{N^{6-2\tau}\rho_N^4}} \right) \leq 2 \exp \left(- N^{2\tau}\right),$$
which is summable hence by Borel-Cantelli Lemma, there exists $\mathcal{O}\in\mathcal{F}$ such that $\mathbb{P}(\mathcal{O})=1$ and on $\mathcal{O}$, there exists $\widetilde{N}<\infty$ such that if $N\geq \widetilde{N}$, $\left\vert \dfrac{C_{F,X_\infty}}{N^3}  R_N \right\vert \leq \sqrt{\dfrac{C_{F,X_\infty}^2P(N)}{N^{6-2\tau}\rho_N^4}} \propto \dfrac{1}{N^{1/2-\tau}\rho_N^2}$, hence $\mathbb{P}$-a.s. for $N$ large enough
\begin{equation}\label{eq:lem_phiN1_gamma2}
\left\Vert \gamma_{N,2}(s)\right\Vert_2^2 \leq C\left( \dfrac{1}{N\rho_N^2} + \dfrac{1}{N^{1-2\tau}\rho_N^4}\right)
\end{equation}
Coming back to \eqref{eq:lem_phiN1_aux}, combining  \eqref{eq:lem_phiN1_gamma1} and \eqref{eq:lem_phiN1_gamma2} and a control of $S_N^{\text{max}}$ from Lemma \ref{lem:maj_S}, we have $\mathbb{P}$-a.s. for $N$ large enough
$$ \left\Vert \gamma_{N}(s)\right\Vert_2^2 \leq C_F \left( \left\Vert Y_N(s)\right\Vert_2^4 +1\right)\left(\dfrac{1}{N^{1/2-\tau}\rho_N^2} +\dfrac{1}{N\rho_N^2}\right) +  C_F\left( \dfrac{1}{N\rho_N^2} + \dfrac{1}{N^{1-2\tau}\rho_N^4}\right),$$
hence taking the square root and using \eqref{eq:lem_phi_N1_aux}, 
$$\Vert \phi_{N,1}(t)\Vert_2\leq C_F \int_{t_0}^t e^{-(t-s)\gamma} \left\Vert Y_N(s)\right\Vert_2^2ds + G_{N,1},$$
where $G_{N,1}=C_F\left( \dfrac{1}{N\rho_N^2} + \dfrac{1}{N^{1-2\tau}\rho_N^4}+\dfrac{1}{N^{1/2-\tau}\rho_N^2}\right)\to 0$ under Hypothesis \ref{hyp:scenarios}.

\medskip

Let us now turn to the sharper control on $\gamma_N$ defined in \eqref{eq:gammaN} when $F$ is bounded. Coming back to \eqref{eq:lem_phiN1_aux}, we have
\begin{align*}
\Vert \gamma_N(s)\Vert_2^2 &= \int \left( \sum_{i,j=1}^N \dfrac{1}{N\rho_N}\overline{\xi_{ij}} F\left(X_{N,j}(s),\eta_s(x_j)\right)\mathbf{1}_{B_{N,i}}(x)\right)^2 dx\\
&= \dfrac{1}{N} \sum_{i=1}^N \left( \sum_{j=1}^N \dfrac{1}{N\rho_N} \overline{\xi_{ij}} F\left(X_{N,j}(s),\eta_s(x_j)\right) \right)^2\\
&= \dfrac{1}{N^3\rho_N^2} \sum_{i,j,k=1}^N  \overline{\xi_{ij}}~\overline{\xi_{ik}} F\left(X_{N,j}(s),\eta_s(x_j)\right)F\left(X_{N,k}(s),\eta_s(x_k)\right)\\
&= \left(\dfrac{ \Vert F \Vert_\infty}{N\rho_N}\right)^2\dfrac{1}{N} \sum_{j,k=1}^N \alpha_{jk} F_jF_k 
\end{align*}
with $\alpha_{jk}:=\sum_{i=1}^N\overline{\xi_{ij}}~\overline{\xi_{ik}}$ and $F_j:=\dfrac{F\left(X_{N,j}(s),\eta_s(x_j)\right)}{\Vert F \Vert_\infty}$. Grothendieck  inequality (see Theorem \ref{thm:grothendieck}) gives then that there exists $K>0$ such that
\begin{align*}
\Vert \gamma_N(s)\Vert_2^2 &\leq K \dfrac{1}{N}  \left(\dfrac{
\Vert F \Vert_\infty}{N\rho_N}\right)^2  \sup_{s_j,t_k = \pm 1}\sum_{j,k} \alpha_{jk} s_j t_k
\\
&\leq  \dfrac{C_F}{N^3\rho_N^2}   \sup_{s_j,t_k = \pm 1}\sum_{i,j,k=1}^N  \overline{\xi_{ij}}~\overline{\xi_{ik}}s_j t_k.
\end{align*}
Fix some vectors of signs $s=(s_i)_{1\leq i \leq N}$ and $t=(t_j)_{1\leq j \leq N}$. Let $A=\left( \overline{\xi_{ij}} \right)_{1\leq i,j \leq N}$, then $\displaystyle\sum_{i,j,k=1}^N \overline{\xi_{ij}}~\overline{\xi_{ik}}s_j t_k = \langle t, A^*As\rangle$ where $\langle,\rangle$ denotes the scalar product in $\mathbb{R}^N$ and $A^*$  the transpose of $A$.
 As for any sign vector $t$, $\Vert t \Vert^2= \sum_{k=1}^N t_k^2 = N$, and $\Vert A^* A \Vert = \Vert A \Vert_{\text{op}}^2$, we obtain as $\vert \langle t, A^*As\rangle \vert \leq \Vert t \Vert \Vert A^* A s \Vert \leq N \Vert A \Vert_{\text{op}}^2$:
$$ \Vert \gamma_N(s)\Vert_2^2 \leq  \dfrac{C_F}{N^3\rho_N^2}    N \Vert A \Vert_{\text{op}}^2 =\dfrac{C_F}{N^2\rho_N^2}  \Vert A \Vert_{\text{op}}^2.$$
From Theorem \ref{thm:tao2012_upper_tail}, there exist $C_a$ and $C_b$ positive constants such that for any $x\geq C_a$,
$$\mathbb{P} \left( \Vert A \Vert_{\text{op}} > x\sqrt{N} \right) \leq C_a \exp \left( -C_bxN \right).$$
We apply it for $x=C_a$, hence, by Borel-Cantelli Lemma as $\exp(-CN)$ is summable, there exists $\widetilde{\mathcal{O}}\in\mathcal{F}$ such that $\mathbb{P}(\widetilde{\mathcal{O}})=1$ and on $\widetilde{\mathcal{O}}$, there exists $\widetilde{N}<\infty$ such that if $N\geq \widetilde{N}$, $\Vert A \Vert_{\text{op}} \leq C_a\sqrt{N}$. We obtain then that 
$$\Vert \gamma_N(s)\Vert_2^2 \leq \dfrac{C_F}{N\rho_N^2}$$
$\mathbb{P}$-a.s. for $N$ large enough, which concludes the proof in the bounded case with \eqref{eq:lem_phi_N1_aux} as 
$\int_{t_0}^t e^{-(t-s)\gamma}ds\leq \dfrac{1}{\gamma}$.
\end{proof}

\begin{lem}\label{lem:drift_term_phi_N2}
Under Hypothesis \ref{hyp_globales}, for any $t>t_0\geq0$, 
\begin{equation}\label{eq:maj_phi_N2}
\Vert \phi_{N,2}(t)\Vert_2 \leq   C_{F,X_\infty,\eta,W} \int_{t_0}^t e^{-(t-s)\gamma} \left(\left\Vert Y_N(s)\right\Vert_2^2+\delta_s\right)ds + G_{N,2},
\end{equation}
where $G_{N,2}$ is explicit in $N$ and tends to 0 as $N\to\infty	$.
Moreover, if we suppose $F$ bounded, we have
\begin{equation}\label{eq:maj_phi_N2_B}
\Vert \phi_{N,2}(t)\Vert_2 \leq  C \left(\int_{t_0}^t e^{-(t-s)\gamma} \delta_sds + \sqrt{R_{N,2}^W} +\dfrac{1}{N}\right),.
\end{equation}
with $R_{N,2}^W$ defined in \eqref{eq:def_Rnk}.
\end{lem}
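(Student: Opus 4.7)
The plan is to apply the contraction estimate \eqref{eq:contraction_sg} of Proposition~\ref{prop:operateur_L} to reduce bounding $\Vert \phi_{N,2}(t)\Vert_2$ to controlling the $L^2(I)$-norm of the piecewise-constant profile $\sum_i \Theta_{s,i,2}\mathbf{1}_{B_{N,i}}$ integrated against $e^{-\gamma(t-s)}$. Since $\Vert \sum_i \Theta_{s,i,2}\mathbf{1}_{B_{N,i}}\Vert_2^2 = \tfrac{1}{N}\sum_i |\Theta_{s,i,2}|^2$, everything reduces to an $\ell^2(i)$-estimate on the coefficients $\Theta_{s,i,2}$. I would rewrite $\Theta_{s,i,2}$ as a pure discretization error: since $X_N(s,y) = X_{N,j}(s)$ on $B_{N,j}$, one has
\begin{equation*}
\Theta_{s,i,2} = \int_I \left[W(x_i,\bar y)F(X_N(s,y),\eta_s(\bar y)) - W(x_i,y)F(X_N(s,y),\eta_s(y))\right] dy,
\end{equation*}
with $\bar y:= \lceil Ny\rceil/N$. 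In this form $\Theta_{s,i,2}$ is controlled by the moduli of continuity of $W(x_i,\cdot)$ and of $\eta_s$.

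Next I would split the integrand via the $ab - a'b' = (a-a')b + a'(b-b')$ identity into a $W$-variation piece and an $F\circ\eta_s$-variation piece. The latter is bounded using Lipschitz continuity of $F$ together with $|\eta_s(\bar y) - \eta_s(y)| \leq 2\delta_s + \Vert \eta_\infty \Vert_L |\bar y - y|$ (triangle inequality through $\eta_\infty$) and $W\leq 1$, producing a contribution of order $\delta_s + 1/N$. For the $W$-variation piece I would use the pointwise bound $|F(X_{N,j}(s),\eta_s(x_j))|\leq C_{F, X_\infty, \eta} + \Vert F \Vert_L |Y_N(s)(x_j)|$, relying on boundedness of $X_\infty$ from Theorem~\ref{thm:large_time_cvg_u_t}; when $F$ is bounded this reduces to $\Vert F\Vert_\infty$ and removes the linear-in-$Y_N$ contribution entirely.

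Setting $a_{ij} := \int_{B_{N,j}}|W(x_i,x_j)-W(x_i,y)|dy$, the key step will be to produce $R_{N,2}^W$ bounds via Jensen's inequality $a_{ij}^2 \leq \tfrac{1}{N}\int_{B_{N,j}}|W(x_i,x_j)-W(x_i,y)|^2 dy$, which sums to $\sum_{i,j}a_{ij}^2 \leq R_{N,2}^W$. Cauchy-Schwarz then yields $\sum_i (\sum_j a_{ij})^2 \leq N \sum_{i,j}a_{ij}^2$ and $\sum_i (\sum_j a_{ij}|Y_N(s)(x_j)|)^2 \leq (\sum_{i,j}a_{ij}^2)\sum_j|Y_N(s)(x_j)|^2 = N R_{N,2}^W \Vert \tilde Y_N(s)\Vert_2^2$. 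Collecting,
\begin{equation*}
\tfrac{1}{N}\sum_i |\Theta_{s,i,2}|^2 \leq C R_{N,2}^W(1 + \Vert \tilde Y_N(s)\Vert_2^2) + C(\delta_s^2 + 1/N^2),
\end{equation*}
with the $Y_N$ term absent in the bounded case. Taking square roots, replacing $\tilde Y_N$ by $Y_N$ up to $o(1)$ via Lemma~\ref{lem:tilde_YN_control}, using $\sqrt{R_{N,2}^W} \to 0$ from Lemma~\ref{lem:control_Rnk} to absorb the cross-term as $\sqrt{R_{N,2}^W}\Vert Y_N(s)\Vert_2 \leq \tfrac{1}{2}\sqrt{R_{N,2}^W} + \tfrac{1}{2}\Vert Y_N(s)\Vert_2^2$ (for $N$ large), and finally integrating against $e^{-\gamma(t-s)}$ will give both \eqref{eq:maj_phi_N2} and \eqref{eq:maj_phi_N2_B}, with $G_{N,2} = C\gamma^{-1}(\sqrt{R_{N,2}^W} + 1/N) \to 0$.

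The main obstacle will be arranging the Cauchy-Schwarz/Jensen pairings so that the natural square-integral quantity $R_{N,2}^W$ appears (instead of a weaker $R_{N,1}^W$), since this matches precisely the sharpness required by the bounded-$F$ bound \eqref{eq:maj_phi_N2_B}. The other subtlety is ensuring that, in the unbounded case, the linear-in-$\Vert Y_N\Vert_2$ term arising from the $F$-growth is absorbed into a quadratic one thanks to the smallness of $\sqrt{R_{N,2}^W}$, which is what produces the quadratic $\Vert Y_N(s)\Vert_2^2$ rather than a linear term in \eqref{eq:maj_phi_N2}.
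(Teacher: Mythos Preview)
Your argument is correct and follows essentially the same skeleton as the paper: reduce via \eqref{eq:contraction_sg} to $\tfrac{1}{N}\sum_i|\Theta_{s,i,2}|^2$, split $\Theta_{s,i,2}$ into a $W$-variation piece and an $\eta_s$-variation piece, handle the latter by the triangle inequality $|\eta_s(x_j)-\eta_s(y)|\leq 2\delta_s + \Vert\eta_\infty\Vert_L/N$, and control the former by centering $F$ at a bounded quantity and using Cauchy--Schwarz/Jensen to land on the $R^W_N$ quantities.

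The one genuine difference is in how you organise the Cauchy--Schwarz for the $Y_N$-dependent part of the $W$-variation. The paper separates $e_{s,i,1}$ (containing $F(X_{N,j})-F(X_\infty)$) from $e_{s,i,2}$ (containing $F(X_\infty)$), and for $e_{s,i,1}$ uses the crude bound $N|\int_{B_{N,j}}(W(x_i,x_j)-W(x_i,y))\,dy|\leq C_W$ together with discrete Jensen, which produces $R_{N,1}^W\Vert\tilde Y_N(s)\Vert_2^2$; the resulting $G_{N,2}$ thus involves $\sqrt{R_{N,1}^W+R_{N,2}^W}$. You instead keep the $W$-variation as a single term, bound $|F(X_{N,j},\eta_s(x_j))|\leq C+\Vert F\Vert_L|Y_N(s)(x_j)|$, and apply Jensen on each $a_{ij}$ before Cauchy--Schwarz, so that only $R_{N,2}^W$ appears, giving $G_{N,2}=C\gamma^{-1}(\sqrt{R_{N,2}^W}+1/N)$. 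Your route is slightly sharper and more uniform between the bounded and unbounded cases (and makes the bounded case \eqref{eq:maj_phi_N2_B} fall out by simply dropping the $|Y_N|$ term); the paper's route has the mild advantage that the three-term split $e_{s,i,1}+e_{s,i,2}+e_{s,i,3}$ mirrors exactly the structure used elsewhere (e.g.\ in the proof of Lemma~\ref{lem:drift_term_phi_N1}). Both are valid.
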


\begin{proof}[Proof of Lemma \ref{lem:drift_term_phi_N2}] We have, with $\Theta_{s,i,2}$ defined in \eqref{eq:def_Theta2}, $\Theta_{s,i,2}\leq e_{s,i,1}+e_{s,i,2}+e_{s,i,3}$ with
\begin{align*}
e_{s,i,1}&:= \sum_{j=1}^N \int_{B_{N,j}} \left( W(x_i,x_j)-W(x_i,y)\right) \left( F \left( X_N(s,x_j),\eta_s(x_j)\right) - F \left( X_\infty(x_j),\eta_s(x_j)\right)\right)dy\\
e_{s,i,2}&:= \sum_{j=1}^N \int_{B_{N,j}}\left( W(x_i,x_j)-W(x_i,y)\right)F\left(X_\infty(x_j),\eta_s(x_j)\right)dy\\
e_{s,i,3}&:= \sum_{j=1}^N \int_{B_{N,j}} W(x_i,y)\left( F \left( X_N(s,x_j),\eta_s(x_j)\right) - F \left( X_N(s,x_j),\eta_s(y)\right)\right)dy.
\end{align*}
We upper-bound each term. We have as $F$ is Lipschitz continuous
\begin{align*}
e_{s,i,1}&\leq \sum_{j=1}^N \left\vert F \left( X_N(s,x_j),\eta_s(x_j)\right) - F \left( X_\infty(x_j),\eta_s(x_j)\right)\right\vert \left\vert \int_{B_{N,j}} \left( W(x_i,x_j)-W(x_i,y)\right)  dy\right\vert\\
&\leq  \sum_{j=1}^N \Vert F \Vert_L \left\vert Y_N(s)(x_j)\right\vert \left\vert \int_{B_{N,j}} \left( W(x_i,x_j)-W(x_i,y)\right)  dy\right\vert,
\end{align*}
which is upper-bounded by
$$ C_F \left( \sum_{j=1}^N \left\vert \int_{B_{N,j}} \left( W(x_i,x_j)-W(x_i,y)\right)  dy\right\vert\right)^\frac{1}{2}\left( \sum_{j=1}^N \left\vert Y_N(s)(x_j)\right\vert^2 \left\vert \int_{B_{N,j}} \left( W(x_i,x_j)-W(x_i,y)\right)  dy\right\vert \right)^\frac{1}{2}$$
by discrete Jensen's inequality. We have $N\left\vert \int_{B_{N,j}} \left( W(x_i,x_j)-W(x_i,y)\right)  dy\right\vert \leq C$ as $W$ is bounded, hence
\begin{align*}
e_{s,i,1}&\leq  C_{F,W}\left( \sum_{j=1}^N \left\vert \int_{B_{N,j}} \left( W(x_i,x_j)-W(x_i,y)\right)  dy\right\vert\right)^\frac{1}{2}\left(\dfrac{1}{N} \sum_{j=1}^N \left\vert Y_N(s)(x_j)\right\vert^2  \right)^\frac{1}{2}\\
&\leq C_F \left( \sum_{j=1}^N \left\vert \int_{B_{N,j}} \left( W(x_i,x_j)-W(x_i,y)\right)  dy\right\vert\right)^\frac{1}{2}\left\Vert \tilde{Y}_N(s)\right\Vert_2.
\end{align*}
We have then 
\begin{align*}
\dfrac{1}{N} \sum_{i=1}^N e_{s,i,1}^2 &\leq \dfrac{C_F}{N} \sum_{i=1}^N \left( \sum_{j=1}^N \left\vert \int_{B_{N,j}} \left( W(x_i,x_j)-W(x_i,y)\right)  dy\right\vert\right)\left\Vert \tilde{Y}_N(s)\right\Vert_2^2\\
&\leq C_F R_{N,1}^W\left\Vert \tilde{Y}_N(s)\right\Vert_2^2,
\end{align*}
where $R_{N,1}^W$ is defined in \eqref{eq:def_Rnk}.

For the second term, we have as $x\mapsto \sup_{s}F(X_\infty(x),\eta_s(x))$ is bounded
\begin{align*}
\dfrac{1}{N}\sum_{i=1}^N e_{s,i,2}^2 &= \dfrac{1}{N} \sum_{i=1}^N \left( \sum_{j=1}^N \int_{B_{N,j}} \left( W(x_i,x_j)-W(x_i,y)\right) F\left( X_\infty(x_j),\eta_s(x_j)\right)dy \right)^2\\
&\leq  \dfrac{C_F}{N} \sum_{i=1}^N \sum_{j=1}^N \int_{B_{N,j}} \left\vert W(x_i,x_j)-W(x_i,y)\right\vert^2 dy \leq C_F R_{N,2}^W,
\end{align*}
where $R_{N,2}^W$ is defined in \eqref{eq:def_Rnk}.

For the third term, as $F$ is Lipschitz continuous
\begin{align*}
e_{s,i,3}&\leq \sum_{j=1}^N \int_{B_{N,j}} W(x_i,y) \Vert F \Vert_L \vert \eta_s(x_j)-\eta_s(y)\vert dy\\
&\leq \sum_{j=1}^N \int_{B_{N,j}} W(x_i,y) \Vert F \Vert_L \left(   \vert \eta_s(x_j)-\eta_\infty(x_j)\vert + \vert \eta_\infty(x_j)-\eta_\infty(y)\vert\right) dy\\
&\leq C_{F,X,W} \left( \delta_s + \dfrac{1}{N}\right) .
\end{align*}
We obtain then with \eqref{eq:tilde_YN_maj}
\begin{align*}
\dfrac{1}{N}\sum_{i=1}^N \Theta_{s,i,2}^2 &\leq \dfrac{3}{N} \sum_{i=1}^N \left( e_{s,i,1}^2+e_{s,i,2}^2+e_{s,i,3}^2\right)\\
&\leq C_{F,X_\infty,X,W}  \left(R_{N,1}^W\left( 1 + \left\Vert Y_N(s)\right\Vert_2^4\right)+ R_{N,2}^W + \delta_s^2 +\dfrac{1}{N^2} \right).
\end{align*}
With  \eqref{eq:def_phi_nk} and  Proposition \ref{prop:operateur_L}, $\Vert \phi_{N,2}(t)\Vert_2 \leq \int_{t_0}^t e^{-(t-s)\gamma}\Vert \sum_{i=1}^N\Theta_{s,i,2}\mathbf{1}_{B_{N,i}}\Vert_2ds,$ and as $\Vert \sum_{i=1}^N\Theta_{s,i,2}\mathbf{1}_{B_{N,i}}\Vert_2^{2} = \dfrac{1}{N} \sum_{i=1}^N \Theta_{s,i,2}^2$, the result follows with $$G_{N,2}=\sqrt{ R_{N,1}^W + R_{N,2}^W} +\dfrac{1}{N},$$
and Lemma \ref{lem:control_Rnk}.

When $F$ is bounded, similarly we show that
$$\dfrac{1}{N}\sum_{i=1}^N \Theta_{s,i,2}^2 \leq C_{F,X_\infty,\eta,W}  \left( R_{N,2}^W+ \delta_s^2 +\dfrac{1}{N^2} \right),$$
hence the result.
\end{proof}

\begin{lem}\label{lem:drift_term_phi_N3}
Under Hypothesis \ref{hyp_globales}, for any $t> t_0\geq 0$, 
\begin{equation}\label{eq:maj_phi_N3}
\Vert \phi_{N,3}(t)\Vert_2 \leq C_{F,X_\infty,W}\int_{t_0}^t e^{-(t-s)\gamma} \left(\left\Vert Y_N(s)\right\Vert_2^2+\delta_s\right)ds + G_{N,3},
\end{equation}
where $G_{N,3}$ is explicit in $N$ and tends to 0 as $N\to\infty	$.
Moreover, if we suppose $F$ bounded, we have
\begin{equation}\label{eq:maj_phi_N3_B}
\sup_{t\geq 0} \Vert \phi_{N,3}(t)\Vert_2 \leq  \sqrt{S_N^W},
\end{equation}
where $S_N^W$ is defined in \eqref{eq:def_Sn}.
\end{lem}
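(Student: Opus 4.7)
The proof will follow the template of Lemma~\ref{lem:drift_term_phi_N2}, and is in fact easier since no probabilistic concentration over the random graph $\xi^{(N)}$ is needed: the quantity $\Theta_{s,i,3}$ is deterministic in the graph, the only randomness entering through $X_N$ itself. Starting from the contraction estimate of Proposition~\ref{prop:operateur_L}, I would first write
$$\Vert \phi_{N,3}(t)\Vert_2 \,\leq\, \int_{t_0}^t e^{-\gamma(t-s)} \Big\Vert \sum_{i=1}^N \Theta_{s,i,3}\mathbf{1}_{B_{N,i}}\Big\Vert_2 \, ds,$$
and use the pairwise disjointness of the $(B_{N,i})$ to rewrite
$$\Big\Vert \sum_{i=1}^N \Theta_{s,i,3}\mathbf{1}_{B_{N,i}}\Big\Vert_2^2 \,=\, \sum_{i=1}^N \int_{B_{N,i}} \Big(\int_I (W(x_i,y)-W(x,y))\, F(X_N(s,y),\eta_s(y))\, dy\Big)^2 dx.$$

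The central observation is that the inner integral is bilinear in $(W(x_i,\cdot)-W(x,\cdot))$ and $F(X_N(s,\cdot),\eta_s(\cdot))$, so a direct Cauchy--Schwarz in $y$ separates them. After summing over $i$ and integrating on $B_{N,i}$, one recognises precisely the quantity $S_N^W$ from \eqref{eq:def_Sn}, giving
$$\Big\Vert \sum_{i=1}^N \Theta_{s,i,3}\mathbf{1}_{B_{N,i}}\Big\Vert_2^2 \,\leq\, S_N^W \cdot \int_I F(X_N(s,y),\eta_s(y))^2\, dy.$$
This is really the only nontrivial move in the proof.

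It then remains to bound $\int_I F(X_N(s,y),\eta_s(y))^2\, dy$. In the bounded case, this is immediately controlled by $\Vert F\Vert_\infty^2$, which plugged back into the integral $\int_{t_0}^t e^{-\gamma(t-s)}ds \leq 1/\gamma$ yields the clean estimate \eqref{eq:maj_phi_N3_B} after absorbing constants. In the general case, the Lipschitz property of $F$, a triangle inequality around $(X_\infty,\eta_\infty)$, and the boundedness of $F(X_\infty,\eta_\infty)$ on $I$ (which follows from continuity of $X_\infty$ and $\eta_\infty$, see Theorem~\ref{thm:large_time_cvg_u_t} and Hypothesis~\ref{hyp_globales}) give
$$\int_I F(X_N(s,y),\eta_s(y))^2 \, dy \,\leq\, C_{F,X_\infty,\eta_\infty}\bigl(1 + \Vert Y_N(s)\Vert_2^2 + \delta_s^2\bigr).$$
Taking the square root, using $\Vert Y_N(s)\Vert_2 \leq \frac{1}{2}(1+\Vert Y_N(s)\Vert_2^2)$ and the boundedness of $\delta_s$, and finally integrating $e^{-\gamma(t-s)}$ against $1$ to produce the $N$-independent constants, I arrive at the claimed bound \eqref{eq:maj_phi_N3}. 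One sets
$$G_{N,3} \,:=\, \tfrac{C_{F,X_\infty,W}}{\gamma}\sqrt{S_N^W},$$
and absorbs the (bounded in $N$) factor $\sqrt{S_N^W}\leq C_W$ into the prefactor $C_{F,X_\infty,W}$ of the integral term. Lemma~\ref{lem:control_Rnk} then ensures $G_{N,3}\to 0$, completing the argument. There is no real obstacle beyond the bookkeeping of the Cauchy--Schwarz split that isolates the $W$-regularity of the kernel inside $S_N^W$.
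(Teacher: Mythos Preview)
Your proposal is correct and follows essentially the same route as the paper: apply the semigroup contraction from Proposition~\ref{prop:operateur_L}, expand the $L^2$-norm via the disjointness of the $B_{N,i}$, use Cauchy--Schwarz in $y$ to factor out $S_N^W$, and then control $\int_I F(X_N(s,y),\eta_s(y))^2\,dy$ either by $\Vert F\Vert_\infty^2$ (bounded case) or by a Lipschitz comparison with $F(X_\infty,\eta_\infty)$ (general case), before integrating against $e^{-\gamma(t-s)}$. The only cosmetic difference is that the paper centers the comparison at $F(X_\infty(y),\eta_s(y))$ rather than $F(X_\infty(y),\eta_\infty(y))$, which avoids producing the $\delta_s$ contribution explicitly; your version yields it, but since the statement already carries a $\delta_s$ term this is immaterial.
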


\begin{proof}[Proof of Lemma \ref{lem:drift_term_phi_N3}]
We have
\begin{align*}
\Vert \sum_{i=1}^N\Theta_{s,i,3}\mathbf{1}_{B_{N,i}}\Vert_2^{2} &= \int_I \left(  \sum_{i=1}^N\Theta_{s,i,3}(x)\mathbf{1}_{B_{N,i}}(x)\right)^2 dx\\
&= \sum_{i=1}^N \int_{B_{N,i}} \left( \int_I \left(W(x_i,y)-W(x,y)\right)F\left(X_N(s,y),\eta_s(y)\right)dy\right)^2 dx\\
&\leq  \sum_{i=1}^N \int_{B_{N,i}} \left( \int_I \left(W(x_i,y)-W(x,y)\right)^2dy\right)\left(\int_I\left(F\left(X_N(s,y),\eta_s(y)\right) \right)^2dy\right) dx,
\end{align*}
 with Cauchy Schwarz's inequality. We can recognize $S_N^W$ defined in \eqref{eq:def_Sn}, and we have that, as $F$ is Lipschitz continuous and $x\mapsto F\left(X_\infty(y),\eta_\infty(y) \right)$ is bounded, 
 \begin{align*}
 &\int_IF\left(X_N(s,y),\eta_s(y)\right)^2dy\\
  &\leq \int_I\left(F\left(X_N(s,y),\eta_s(y)\right)- F\left(X_\infty(y),\eta_s(y)\right) \right)^2dy + \int_I F\left(X_\infty(y),\eta_s(y) \right)^2dy\\
 &\leq \Vert F \Vert_L^2 \int_I Y_N(s)(y)^2 dy+\Vert F(X_\infty,\eta_\infty)\Vert_\infty^2\leq C_{F,W}\left( \Vert Y_N(s)\Vert_2^2 +1\right)\leq C_{F,W}\left( \Vert Y_N(s)\Vert_2^4 +1\right).
 \end{align*}
As before, \eqref{eq:def_phi_nk} and Proposition \ref{prop:operateur_L} give that $\Vert \phi_{N,3}(t)\Vert_2 \leq \int_{t_0}^t e^{-(t-s)\gamma}\Vert \sum_{i=1}^N\Theta_{s,i,3}\mathbf{1}_{B_{N,i}}\Vert_2ds$ and
$\Vert \sum_{i=1}^N\Theta_{s,i,3}\mathbf{1}_{B_{N,i}}\Vert_2^{2} \leq C_{F,W}S_N^W\left( \Vert Y_N(s)\Vert_2^4 +1\right)$ hence the result with $G_{N,3}= C_{F,W} \sqrt{R_{N,3}}$ and Lemma \ref{lem:control_Rnk}. When $F$ is bounded, we directly have $\Vert \sum_{i=1}^N\Theta_{s,i,3}\mathbf{1}_{B_{N,i}}\Vert_2^{2} \leq S_N^W$ hence \eqref{eq:maj_phi_N3_B}  as 
$\int_{t_0}^t e^{-(t-s)\gamma}ds\leq \dfrac{1}{\gamma}.$
\end{proof}

\subsection{Proof of Proposition \ref{prop:drift_term}}
Proposition \ref{prop:drift_term} is then a direct consequence of \eqref{eq:def_phi_n0} and \eqref{eq:def_phi_nk}, of the controls given by Lemmas \ref{lem:drift_term_quadratic}, \ref{lem:drift_term_phi_N1}, \ref{lem:drift_term_phi_N2} and \ref{lem:drift_term_phi_N3}, with $G_N=G_{N,1}+G_{N,2}+G_{N,3}$, and of Lemma \ref{lem:control_Rnk} to have $G_N\to 0$.


\section{About the finite time behavior}\label{S:finite}

In this section, we prove Proposition \ref{prop:finite_time}.

\subsection{Main technical results}

In the following, we denote by $\widehat{Y}_N(t):=X_N(t)-X_t$.

\begin{proof}[Proof of Proposition \ref{prop:finite_time}]
Let $t\leq T$. Recall the definition of $X_N(t)$ in \eqref{eq:def_UN} and $X_t$ in \eqref{eq:def_utx}. Proceeding exactly as in the proof of Proposition \ref{prop:termes_sys_micros}, and recalling the definition of $M_N(t)$ in \eqref{eq:def_M_N}, we have
\begin{multline*}
d\widehat{Y}_N(t)=-\alpha \widehat{Y}_N(t)dt + dM_N(t) + \sum_{i,j=1}^N \mathbf{1}_{B_{N,i}}\dfrac{w_{ij}}{N} F\left(X_{N,j}(t),\eta_t(x_j)\right)dt - T_W F\left(X_t,\eta_t\right)dt\\
=-\alpha \widehat{Y}_N(t)dt + dM_N(t) + \sum_{k=1}^3 \sum_{i=1}^N \Theta_{t,i,k}\mathbf{1}_{B_{N,i}}dt + T_W\left( F\left(X_{N,j}(t),\eta_t(x_j)\right)-F\left(X_t,\eta_t\right) \right)dt
\end{multline*}
with the notations introduced in \eqref{eq:def_Theta1} -  \eqref{eq:def_Theta3}. It gives then, as $\widehat{Y}_N(0)=0$,
$$\widehat{Y}_N(t)=\int_0^t e^{-\alpha(t-s)}\widehat{r}_N(s)ds + \int_0^t e^{-\alpha(t-s)}dM_N(s)=:\widehat{\phi}_N(t) + \widehat{\zeta}_N(t)$$
with
$$\widehat{r}_N(t)=\sum_{k=1}^4 \sum_{i=1}^N \Theta_{t,i,k}\mathbf{1}_{B_{N,i}} + T_W\left( F\left(X_{N}(t),\eta_t\right)-F\left(X_t,\eta_t\right) \right).$$
Note that we obtain a similar expression as for $Y_N$ in Proposition \ref{prop:termes_sys_micros}, but with $e^{-\alpha t}$ instead of the semi-group $e^{t\mathcal{L}}$. We use then the two following results, similar to Propositions \ref{prop:noise_perturbation} and \ref{prop:drift_term}.

\begin{prop}\label{prop:noise_perturbation_finite} Let $T>0$. Under Hypothesis \ref{hyp_globales}, there exists a constant $C=C(T,F,\Vert \eta\Vert_\infty)>0$ such that $\mathbb{P}$-almost surely for $N$ large enough: 
$$\mathbf{E}\left[\sup_{s\leq T} \Vert \widehat{\zeta}_N(s) \Vert_2\right] \leq \dfrac{C}{\sqrt{N\rho_N}}.$$
\end{prop}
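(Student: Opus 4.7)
The statement is essentially the $m=1$ case of Proposition~\ref{prop:noise_perturbation}, with the scalar contraction $e^{-\alpha(t-s)}$ replacing the contraction semi-group $e^{(t-s)\mathcal{L}}$. By Cauchy--Schwarz it is enough to prove
\[
\mathbf{E}\Bigl[\sup_{s\leq T}\Vert\widehat{\zeta}_N(s)\Vert_2^{2}\Bigr]\leq \frac{C}{N\rho_N},
\]
and then take the square root. Using the compensated measure $\tilde{\pi}_j={\pi}_j-\lambda_{N,j}(s)\,ds\,dz$, we rewrite
\[
\widehat{\zeta}_N(s)=\sum_{j=1}^N\int_0^s\!\!\int_0^\infty e^{-\alpha(s-u)}\chi_j(u,z)\,\tilde{\pi}_j(du,dz),\qquad \chi_j(u,z):=\sum_{i=1}^N\mathbf{1}_{B_{N,i}}\frac{w_{ij}}{N}\mathbf{1}_{z\leq\lambda_{N,j}(u)},
\]
exactly as in \eqref{eq:zeta_N_chi}, except that each integrand is multiplied by the scalar $e^{-\alpha(s-u)}$ instead of being acted upon by $e^{(s-u)\mathcal{L}}$.

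I would then follow verbatim the five steps of the proof of Proposition~\ref{prop:noise_perturbation} with $m=1$ and $\phi(v)=\Vert v\Vert_2^2$. Itô's formula yields a decomposition $\phi(\widehat{\zeta}_N(t))=I_0(t)+I_1(t)+I_2(t)$. The drift term $I_0(t)=\int_0^t 2\langle \widehat{\zeta}_N(s),-\alpha\widehat{\zeta}_N(s)\rangle\,ds=-2\alpha\int_0^t\Vert\widehat{\zeta}_N(s)\Vert_2^{2}\,ds\leq 0$ (this replaces the Lumer--Phillips argument and is actually simpler in our case). The martingale $I_1$ is controlled through BDG and H\"older, the quadratic correction $I_2$ from Taylor, each combined with the Young inequality $ab\leq \varepsilon a^{2}+\varepsilon^{-1}b^{2}/4$. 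Absorbing $\varepsilon \mathbf{E}[\sup_{s\leq T}\Vert \widehat{\zeta}_N(s)\Vert_2^{2}]$ into the left-hand side yields
\[
\mathbf{E}\Bigl[\sup_{s\leq T}\Vert\widehat{\zeta}_N(s)\Vert_2^{2}\Bigr]\leq C\,\mathbf{E}\Bigl[\sum_{j=1}^N\int_0^T\!\!\int_0^\infty \Vert\chi_j(s,z)\Vert_2^{2}\,\pi_j(ds,dz)\Bigr].
\]

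To finish, compute $\Vert\chi_j(s,z)\Vert_2^{2}=\mathbf{1}_{z\leq\lambda_{N,j}(s)}\sum_{i=1}^N\xi_{ij}/(N^{3}\rho_N^{2})$, so that integrating against $\pi_j$ produces $Z_{N,j}(T)$, giving
\[
\mathbf{E}\Bigl[\sum_{j=1}^N\int_0^T\!\!\int_0^\infty\Vert\chi_j(s,z)\Vert_2^{2}\pi_j(ds,dz)\Bigr]=\frac{1}{N\rho_N}\,\mathbf{E}\Bigl[\frac{1}{N}\sum_{j=1}^N\Bigl(\sum_{i=1}^N\frac{\xi_{ij}}{N\rho_N}\Bigr)Z_{N,j}(T)\Bigr].
\]
Using $\mathbb{P}$-a.s. the graph concentration bound $\sup_j N^{-1}\rho_N^{-1}\sum_i\xi_{ij}\leq 2$ (the estimate \eqref{eq:estimees_IC} already invoked in Proposition~\ref{prop:control_mean_Zj^m}) together with $\sup_N\mathbf{E}[N^{-1}\sum_j Z_{N,j}(T)]<\infty$ from that same proposition yields the announced $C/(N\rho_N)$ bound. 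The main potential obstacle is the graph-dependent factor $\sum_i\xi_{ij}$ inside the expectation, which is exactly what the concentration estimate \eqref{eq:estimees_IC} is designed to handle; this is where the dilution Hypothesis~\ref{hyp:scenarios} (in particular $N\rho_N\to\infty$) is used.
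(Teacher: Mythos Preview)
Your proposal is correct and follows essentially the same approach as the paper: the paper's own proof simply says to redo the argument of Proposition~\ref{prop:noise_perturbation} with $e^{-\alpha(t-s)}$ in place of $e^{(t-s)\mathcal{L}}$, noting that $I_0(t)=-2\alpha\int_0^t\Vert\widehat{\zeta}_N(s)\Vert_2^{2}\,ds\leq 0$, which is exactly what you do (with slightly more detail). One small remark: you correctly invoke the graph concentration \eqref{eq:estimees_IC}, which actually relies on Hypothesis~\ref{hyp:scenarios} even though the statement of the proposition only lists Hypothesis~\ref{hyp_globales}; this appears to be a minor omission in the paper rather than an issue with your argument.
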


\begin{prop}\label{prop:drift_term_finite}
Under Hypotheses \ref{hyp_globales}  and \ref{hyp:scenarios}, for any $t>0$, 
\begin{align}\label{eq:control_drift_finite}
\Vert \widehat{\phi}_N(t)\Vert_2 &\leq C \left(  \int_0^t e^{-\alpha(t-s)} \Vert \widehat{Y}_N(s) \Vert_2ds + \widehat{G}_{N}\right),
\end{align}
where $\widehat{G}_{N}$ is an explicit quantity  to be found in the proof that tends to 0 as $N\to \infty$.
\end{prop}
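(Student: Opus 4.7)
The strategy parallels Proposition~\ref{prop:drift_term}, but is considerably simpler since on a bounded time interval the subcriticality Hypothesis~\ref{hyp:subcritical} plays no role: one can use the scalar decay $e^{-\alpha(t-s)}$ instead of the contraction semigroup $e^{(t-s)\mathcal{L}}$, and linearize the nonlinearity around the deterministic profile $X_s$ rather than around the stationary state $X_\infty$, which replaces the quadratic Taylor remainder by a linear Lipschitz bound. The plan is to decompose
\[
\widehat{\phi}_N(t) \;=\; \widehat{\phi}_{N,0}(t) + \sum_{k=1}^{3} \widehat{\phi}_{N,k}(t),
\]
where
\[
\widehat{\phi}_{N,0}(t) \;=\; \int_0^t e^{-\alpha(t-s)}\, T_W\bigl(F(X_N(s),\eta_s) - F(X_s,\eta_s)\bigr)\,ds,
\]
and $\widehat{\phi}_{N,k}(t) = \int_0^t e^{-\alpha(t-s)}\,\sum_{i=1}^N \Theta_{s,i,k}\mathbf{1}_{B_{N,i}}\,ds$ for $k \in \{1,2,3\}$, with the $\Theta_{s,i,k}$ of \eqref{eq:def_Theta1}--\eqref{eq:def_Theta3}.

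The first term is immediate: combining Lipschitz continuity of $F$ with the $L^2 \to L^2$ continuity of $T_W$ (Proposition~\ref{prop:proprietes_TW}) gives
\[
\|\widehat{\phi}_{N,0}(t)\|_2 \;\le\; \|F\|_L\,\|T_W\|_{L^2\to L^2} \int_0^t e^{-\alpha(t-s)}\,\|\widehat{Y}_N(s)\|_2\,ds,
\]
which is exactly the integral contribution of the claim. For $k=2$ and $k=3$, I would reuse the pointwise-in-$s$ estimates of Lemmas~\ref{lem:drift_term_phi_N2} and~\ref{lem:drift_term_phi_N3}: these never rely on subcriticality nor on the spectral gap, but only on the continuity of $W$ and the Lipschitz continuity of $F$, producing bounds involving the deterministic vanishing quantities $R^W_{N,1},R^W_{N,2},S^W_N$ (Lemma~\ref{lem:control_Rnk}). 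For $k=1$, I would follow the concentration strategy of Lemma~\ref{lem:drift_term_phi_N1}, but using the deterministic pivot $F(X_s(x_j),\eta_s(x_j))$ -- uniformly bounded on $[0,T]\times I$ by Proposition~\ref{prop:exis_lambda_barre} -- in place of $F(X_\infty,\eta_\infty)$. The Azuma--Hoeffding-type bound on the quadratic form in the centered Bernoullis $\overline{\xi_{ij}}$ and the auxiliary control on $S_N^{\max}$ then go through verbatim, the residual $F(X_{N,j}) - F(X_s)$ being controlled linearly by $\widehat{Y}_N$ through Lipschitz continuity of $F$.

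The main obstacle is that the above concentration argument for $\Theta_{s,i,1}$ naturally produces, as in Lemma~\ref{lem:drift_term_phi_N1}, a bound \emph{quadratic} in $\|\widehat{Y}_N(s)\|_2$, whereas the target \eqref{eq:control_drift_finite} is linear. To reduce the quadratic estimate to a linear one, I would exploit a crude a priori bound $\sup_{s \in [0,T]} \|\widehat{Y}_N(s)\|_2 \le M(T)$, holding with high $\mathbf{P}$-probability as a consequence of the moment estimate of Proposition~\ref{prop:control_mean_Zj^m} applied both to $X_N$ and to the uniformly bounded deterministic profile $X$; this allows to replace $\|\widehat{Y}_N(s)\|_2^2$ by $M(T)\,\|\widehat{Y}_N(s)\|_2$ inside the integral. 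Collecting the resulting deterministic vanishing factors into an explicit $\widehat{G}_N \to 0$, and using $\int_0^t e^{-\alpha(t-s)}\,ds \le 1/\alpha$ to absorb constant-in-$s$ contributions, finally yields the announced bound \eqref{eq:control_drift_finite}.
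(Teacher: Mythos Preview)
Your overall strategy---the decomposition into $\widehat{\phi}_{N,0}+\sum_{k=1}^3\widehat{\phi}_{N,k}$, the scalar decay $e^{-\alpha(t-s)}$, and the pivot at $F(X_s(x_j),\eta_s(x_j))$ in place of $F(X_\infty(x_j),\eta_\infty(x_j))$---is exactly the paper's approach, and your treatment of $\widehat{\phi}_{N,0}$ is correct.

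The problem is your last paragraph: the ``obstacle'' you identify is not real. The quadratic exponent $\|Y_N(s)\|_2^2$ in the \emph{conclusions} of Lemmas~\ref{lem:drift_term_phi_N1}--\ref{lem:drift_term_phi_N3} is an artificial post-processing step, inserted only to match the target form of Proposition~\ref{prop:drift_term}. If you look inside those proofs, the raw estimates are of the shape
\[
\bigl\|\textstyle\sum_i \Theta_{s,i,k}\mathbf{1}_{B_{N,i}}\bigr\|_2^2 \;\le\; g_N\bigl(\|\tilde Y_N(s)\|_2^2 + 1\bigr)
\]
with $g_N\to 0$ a deterministic graph quantity. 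Taking the square root yields directly $C\sqrt{g_N}\bigl(\|\tilde Y_N(s)\|_2+1\bigr)$, which is \emph{linear}. Replacing $\tilde Y_N$ by $\overline Y_N$ and invoking Lemma~\ref{lem:control_hat_YN} (the finite-time analogue of Lemma~\ref{lem:tilde_YN_control}) gives the linear bound in $\|\widehat Y_N(s)\|_2$ with vanishing prefactor, exactly as the paper does. No reduction from quadratic to linear is needed.

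Your proposed workaround---a crude a priori bound $\sup_{s\le T}\|\widehat Y_N(s)\|_2\le M(T)$ via Proposition~\ref{prop:control_mean_Zj^m}---is therefore unnecessary, and also problematic on its own terms: that proposition controls averaged moments $\mathbf{E}\bigl[\frac1N\sum_j Z_{N,j}(T)^m\bigr]$, not pathwise $L^2$ bounds on $X_N$, and invoking it would insert a $\mathbf{P}$-high-probability qualifier into what is, in the paper, a pathwise inequality (conditional on the graph) that is then fed into Gr\"onwall before taking expectation.
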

Their proofs are postponed to the following subsection. Hence we obtain 
$$\left\Vert \widehat{Y}_N(t)\right\Vert_2 \leq C\left( \widehat{G}_{N} + \left\Vert \widehat{\zeta}_N(t)\right\Vert_2+\int_0^t e^{-\alpha(t-s)}\left\Vert \widehat{Y}_N(s)\right\Vert_2ds\right),$$
which gives with Grönwall lemma
$$\sup_{t\leq T} \left\Vert \widehat{Y}_N(t)\right\Vert_2 \leq C\left( \widehat{G}_{N} + \sup_{t\leq T} \left\Vert \widehat{\zeta}_N(t)\right\Vert_2\right).$$
With Proposition \ref{prop:noise_perturbation_finite}, it leads to
$$\mathbf{E}\left[\sup_{t\leq T}\left\Vert\widehat{Y}_N(t)\right\Vert_2\right]\leq C\left( \widehat{G}_N + \dfrac{1}{\sqrt{N\rho_N}}\right),$$
hence the result \eqref{eq:finite_time} as \eqref{eq:dilution} implies $\dfrac{1}{\sqrt{N\rho_N}}\to 0$ and $\widehat{G}_N\to 0$.
\end{proof}


\subsection{Proofs of Propositions \ref{prop:noise_perturbation_finite} and \ref{prop:drift_term_finite}}

\begin{proof}[Proof of Proposition \ref{prop:noise_perturbation_finite}]
We do as for Proposition \ref{prop:noise_perturbation}, and apply Îto's formula on 
$$\widehat{\zeta}_N(t)=\sum_{j=1}^N \int_0^t \int_0^\infty e^{-\alpha(t-s)}\chi_j(s,z)\tilde{\pi}_j(ds,dz).$$
The term $I_0(t)$ in \eqref{eq:zeta_spatial_def} becomes $-\alpha \int_0^t \left\Vert \widehat{\zeta}_N(s)\right\Vert_2ds$ which is still non-positive. About $I_1(t)$ and $I_2(t)$, the proof remains the same aside from the fact that we now consider $\widehat{\zeta}_N$ instead of $\zeta_N$. 
\end{proof}

To prove \ref{prop:drift_term_finite}, we introduce an auxilliary  quantity as in Lemma \ref{lem:tilde_YN_control}.
\begin{lem}\label{lem:control_hat_YN}
Let $\overline{Y}_N(s)(v):=\widehat{Y}_N(s)\left( \dfrac{\lceil Nv\rceil}{N}\right)$.  Then for any $T\geq 0$
\begin{equation}\label{eq:barYN_control}
\sup_{0\leq s\leq T} \Vert\overline{Y}_N(s) -  \widehat{Y}_N(s)\Vert_2 \xrightarrow[N\to\infty]{}0.
\end{equation}
\end{lem}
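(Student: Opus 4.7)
The plan is to mimic the proof of Lemma~\ref{lem:tilde_YN_control}, replacing the static $X_\infty$ with the time-dependent $X_s$ and establishing an appropriate equicontinuity of the family $(X_s)_{s\in[0,T]}$.

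First I would observe that $X_N(s)$ is, by construction \eqref{eq:def_UN}, piecewise constant on the partition $(B_{N,i})_{i=1,\ldots,N}$. Consequently, for any $v\in I$, one has $X_N(s)(v) = X_N(s)\big(\frac{\lceil Nv\rceil}{N}\big)$. This makes the microscopic contribution cancel out in the difference, leaving
\begin{equation*}
\overline{Y}_N(s)(v) - \widehat{Y}_N(s)(v) = X_s(v) - X_s\!\left(\tfrac{\lceil Nv\rceil}{N}\right).
\end{equation*}
Therefore
\begin{equation*}
\|\overline{Y}_N(s) - \widehat{Y}_N(s)\|_2^2 = \sum_{j=1}^N \int_{B_{N,j}} \bigl(X_s(y) - X_s(x_j)\bigr)^2 dy,
\end{equation*}
and no randomness remains in the estimate.

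Second, I would establish a modulus of continuity for $X_s$ that is uniform in $s\in[0,T]$. Reproducing the computation in \eqref{eq:reg_Xt_aux} (which relies only on the boundedness of $\lambda$ established in the proof of Theorem~\ref{thm:large_time_cvg_u_t} and on the integrability of $h$), one gets
\begin{equation*}
|X_s(x) - X_s(y)| \leq \Vert\lambda\Vert_\infty \Vert h\Vert_1 \int_I |W(x,z)-W(y,z)|\,dz.
\end{equation*}
Since $W$ is continuous on the compact set $I^2$, it is uniformly continuous, so the right-hand side tends to $0$ as $|x-y|\to 0$ uniformly, and in particular is bounded by a deterministic modulus $\omega(1/N)\to 0$ whenever $|x-y|\leq 1/N$. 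Note that this bound is independent of $s$.

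Third, combining the two steps, since $|y-x_j|\leq 1/N$ for $y\in B_{N,j}$,
\begin{equation*}
\sup_{0\leq s\leq T}\|\overline{Y}_N(s) - \widehat{Y}_N(s)\|_2^2 \leq \omega(1/N)^2 \xrightarrow[N\to\infty]{} 0,
\end{equation*}
which proves \eqref{eq:barYN_control}. I do not anticipate a serious obstacle: the piecewise-constancy of $X_N(s)$ eliminates the difficult stochastic term, and the rest is just uniform continuity of $W$ applied via the a priori $L^\infty$ bound on $\lambda$. The same argument extends to the weaker hypotheses of Section~\ref{S:extension} by splitting the sum over the intervals $C_k$ on which the appropriate modulus holds, exactly as in Lemma~\ref{lem:tilde_YN_control}.
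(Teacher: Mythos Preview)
Your proof is correct and follows essentially the same route as the paper's: both reduce the difference to $\sum_{j}\int_{B_{N,j}}(X_s(y)-X_s(x_j))^2\,dy$ via the piecewise constancy of $X_N$, and both conclude by a uniform-in-$s$ spatial modulus of continuity for $X_s$. The paper simply invokes the uniform continuity of $X$ on the compact $[0,T]\times I$ (available from Proposition~\ref{prop:exis_lambda_barre}), whereas you derive this modulus explicitly through \eqref{eq:reg_Xt_aux} and the uniform continuity of $W$; this is a minor cosmetic difference, not a substantive one.
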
 

\begin{proof}
It plays the role of $\tilde{Y}_N(s)$ introduced in Lemma \ref{lem:tilde_YN_control}. Similarly at what has been done before, we have
$$\left\Vert \widehat{Y}_N(s)-\overline{Y}_N(s)\right\Vert_2^2 = \sum_{j=1}^N \int_{B_{N,j}} \left( \widehat{Y}_N(s)(y)-\overline{Y}_N(s)(y)\right)^2dy = \sum_{j=1}^N \int_{B_{N,j}} \left(X_s(x_j)-X_s(y)\right)^2dy$$
which tends to 0 by uniform continuity of $X$ on $ [0,T]\times I$. It still holds under the hypotheses of Section \ref{S:extension} by decomposing the sum on each interval $C_k$.
\end{proof}

\begin{proof}[Proof of Proposition \ref{prop:drift_term_finite}]
We divide $\widehat{\phi}$ as in \eqref{eq:def_phi_nk} and study each contribution. About $\widehat{\phi}_{N,0}(t):=\int_0^te^{-\alpha(t-s)} T_W\left( F(X_N(s),\eta_s)-F(X_s,\eta_s)\right)ds$, we have
\begin{align*}
\left\Vert T_W\left( F(X_N(s),\eta_s)-F(X_s,\eta_s)\right)\right\Vert_2^2 &\leq C_{W,F} \left( \int_I \Vert F \Vert_L \left\vert X_N(s)(y)-X_s(y)\right\vert dy\right)^2 \\
&\leq C_{W,F} \left\Vert \widehat{Y}_N(s)\right\Vert_2^2,
\end{align*}
which gives
$$\left\Vert \widehat{\phi}_{N,0}(t)\right\Vert_2\leq C_{W,F} \int_0^te^{-\alpha(t-s)}  \left\Vert \widehat{Y}_N(s)\right\Vert_2ds.$$

About $\widehat{\phi}_{N,1}(t):=\int_0^te^{-\alpha(t-s)} \sum_{i=1}^N \frac{\Theta_{s,i,1}}{N} \mathbf{1}_{B_{N,i}} ds$, we do as in Lemma \ref{lem:drift_term_phi_N1}. Instead of inserting the terms $F(X_\infty(x_j),\eta_\infty(x_j))$ in \eqref{eq:lem_phiN1_aux} we insert the terms $F(X_s(x_j),\eta_s(x_j))$, that is
\begin{multline*}
\gamma_N(s)\leq \sum_{i,j=1}^N \dfrac{1}{N}  \kappa_{N,i} \overline{\xi_{ij}} \left(F(X_{N,j}(s),\eta_s(x_j))-F(X_s(x_j),\eta_s(x_j))\right)\mathbf{1}_{B_{N,i}}\\+ \sum_{i,j=1}^N \dfrac{1}{N}  \kappa_{N,i} \overline{\xi_{ij}} F(X_s(x_j),\eta_s(x_j))\mathbf{1}_{B_{N,i}}=:\widehat{\gamma}_{N,1}(s) + \widehat{\gamma}_{N,2}(s).
\end{multline*}
The treatment of $\widehat{\gamma}_{N,1}$ is similar of $\gamma_{N,1}$: we make $\overline{Y}_N(s)$ appear instead of $\tilde{Y}_N$ and obtain 
$\Vert\widehat{\gamma}_{N,1}(s)\Vert_2^2\leq C_F \left( \left\Vert \widehat{Y}_N(s)\right\Vert_2^2 +1\right)\left(\dfrac{S_{N}^\text{max}}{\rho_N^2} +\dfrac{1}{N\rho_N^2}\right)$ with \eqref{eq:barYN_control}. About $\widehat{\gamma}_{N,2}$, we do as $\gamma_{N,2}$ as $\sup_{t\in [0,T],x\in I} F(X_t(x),\eta_t(x))<\infty$ and obtain that $\mathbb{P}$-almost surely if $N$ is large enough, $\Vert\widehat{\gamma}_{N,2}\Vert_2^2\leq  C\left( \dfrac{1}{N\rho_N^2} + \dfrac{1}{N^{1-2\tau}\rho_N^4}\right)$.  
We have then that, $\mathbb{P}$-almost surely if $N$ is large enough,
$$\left\Vert \widehat{\phi}_{N,1}(t)\right\Vert_2\leq C_F \int_{t_0}^t e^{-\alpha(t-s)} \left\Vert \widehat{Y}_N(s)\right\Vert_2ds + G_{N,1},$$
where $G_{N,1}\to 0$. 

About $\widehat{\phi}_{N,k}(t):=\int_0^te^{-\alpha(t-s)} \sum_{i=1}^N \frac{\Theta_{s,i,k}}{N} \mathbf{1}_{B_{N,i}} ds$ for $k\in \{2,3\}$, we proceed similarly, doing as in Lemmas \ref{lem:drift_term_phi_N2} and \ref{lem:drift_term_phi_N3} but instead of inserting the terms $F(X_\infty(x_j),\eta_\infty(x_j))$ we insert the terms $F(X_s(x_j),\eta_s(x_j))$: then there is no $\delta_s$ terms. We obtain then
$$\Vert \widehat{\phi}_{N,2}(t)\Vert_2 \leq  C \int_{t_0}^t e^{-\alpha(t-s)}\left\Vert \widehat{Y}_N(s)\right\Vert_2ds + G_{N,2},$$
and 
$$\Vert \widehat{\phi}_{N,3}(t)\Vert_2 \leq C\int_{t_0}^t e^{-\alpha(t-s)} \left\Vert Y_N(s)\right\Vert_2ds + G_{N,3},$$
where both $G_{N,2}$ and $G_{N,3}$ tends to 0. Note that we can obtain better bounds when $F$ is bounded. By putting all the terms $\widehat{\phi}_{N,k}$ together, we get \eqref{eq:control_drift_finite}.
\end{proof}

\appendix
\section{Auxiliary results}
\label{S:appendix}
\subsection{Concentration results}

\begin{thm}[Grothendieck's inequality as in \cite{Coppini2022}]\label{thm:grothendieck} Let $\{a_{ij}\}_{i,j=1,\cdots,n}$ be a $n\times n$ real matrix such that for all $s_i$, $t_j\in\{-1,1\}$
$$\sum_{i,j=1}^na_{ij}s_it_j\leq 1.$$
Then, there exists a constant $K_R>0$, such that for every Hilbert space $\left( H, \langle \cdot,\cdot\rangle_H\right)$ and for all $S_i$ and $T_j$ in the unit ball of $H$
$$ \sum_{i,j=1}^n a_{ij}\langle S_i,T_j\rangle_H \leq  K_R.$$
\end{thm}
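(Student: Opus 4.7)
The plan is to prove Grothendieck's inequality via the classical two-step strategy: a Gaussian rounding argument that links inner products to the arcsine function, followed by Krivine's linearization trick to revert to the plain inner products. This yields the explicit constant $K_R=\pi/(2\ln(1+\sqrt{2}))$.

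First I would reduce to finite dimension: since only the $2n$ vectors $S_1,\dots,S_n,T_1,\dots,T_n$ appear, one may replace $H$ by $\mathbb{R}^d$ for some $d\le 2n$, equipped with its canonical inner product. Let $g$ be a standard Gaussian vector in $\mathbb{R}^d$ and introduce the random signs $\sigma_i:=\mathrm{sgn}(\langle g,S_i\rangle)$, $\tau_j:=\mathrm{sgn}(\langle g,T_j\rangle)$. The classical Sheppard arcsine identity gives
$$
\mathbb{E}[\sigma_i\tau_j]=\tfrac{2}{\pi}\arcsin(\langle S_i,T_j\rangle_H).
$$
Since the hypothesis $\sum_{i,j}a_{ij}s_it_j\le 1$ holds pathwise for the random signs $(\sigma_i,\tau_j)\in\{-1,+1\}^{2n}$, taking expectation gives
$$
\sum_{i,j=1}^{n} a_{ij}\arcsin(\langle S_i,T_j\rangle_H)\le \tfrac{\pi}{2}.
$$

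The second and crucial step is to linearize the arcsine. Following Krivine, set $c$ as the unique positive solution of $\sinh(c)=1$, i.e.\ $c=\ln(1+\sqrt{2})$, and introduce in the auxiliary Hilbert space $H':=\bigoplus_{k\ge 0} H^{\otimes(2k+1)}$ the vectors
$$
S_i':=\bigoplus_{k\ge 0}\sqrt{\frac{c^{2k+1}}{(2k+1)!}}\,S_i^{\otimes(2k+1)},\qquad T_j':=\bigoplus_{k\ge 0}(-1)^k\sqrt{\frac{c^{2k+1}}{(2k+1)!}}\,T_j^{\otimes(2k+1)}.
$$
Using the identity $\langle x^{\otimes m},y^{\otimes m}\rangle=\langle x,y\rangle^m$ together with the Taylor expansions of $\sin$ and $\sinh$, one checks that $\|S_i'\|_{H'}^2=\|T_j'\|_{H'}^2=\sinh(c)=1$, so that $S_i',T_j'$ lie in the unit ball of $H'$, and $\langle S_i',T_j'\rangle_{H'}=\sin(c\langle S_i,T_j\rangle_H)$. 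Applying the arcsine inequality above to the pair $(S_i',T_j')$ and using that $\arcsin(\sin(c\theta))=c\theta$ for $|\theta|\le 1$ (valid because $c<\pi/2$) yields $c\sum_{i,j}a_{ij}\langle S_i,T_j\rangle_H\le \pi/2$, which is the announced inequality with $K_R=\pi/(2\ln(1+\sqrt{2}))$.

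The main obstacle is the Krivine construction itself: one must arrange simultaneously that $S_i'$ and $T_j'$ are unit vectors in $H'$ and that their inner product linearizes as $\sin(c\langle S_i,T_j\rangle_H)$. The alternation $(-1)^k$ on the $T_j'$ side reproduces the alternating Taylor coefficients of $\sin$, while its absence on the $S_i'$ side makes the norm of $S_i'$ equal to $\sinh(c)$. The compatibility of the two constraints hinges on the specific value $c=\ln(1+\sqrt{2})\approx 0.881<\pi/2$, which is exactly what closes the argument by keeping $c\langle S_i,T_j\rangle_H$ within the principal branch of $\arcsin$.
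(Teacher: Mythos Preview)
The paper does not actually prove this theorem: it is stated in the appendix as a known result, with a citation to \cite{Coppini2022}, and is used as a black box in the proof of Lemma~\ref{lem:drift_term_phi_N1}. So there is no ``paper's own proof'' to compare against.

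Your argument is the classical Krivine proof and is correct in substance. One small point worth tightening: Sheppard's arcsine identity $\mathbb{E}[\sigma_i\tau_j]=\tfrac{2}{\pi}\arcsin(\langle S_i,T_j\rangle)$ holds as written only when $S_i$ and $T_j$ are \emph{unit} vectors, whereas the statement allows them in the closed unit ball. You should add the standard one-line reduction: embed $H$ into $H\oplus\mathbb{R}\oplus\mathbb{R}$ and replace $S_i$ by $(S_i,\sqrt{1-\|S_i\|^2},0)$ and $T_j$ by $(T_j,0,\sqrt{1-\|T_j\|^2})$, which are unit vectors with the same pairwise inner products $\langle S_i,T_j\rangle$. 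With that in place, both the Gaussian rounding step and the Krivine tensor construction (where your norm computation $\|S_i'\|^2=\sinh(c)$ again uses $\|S_i\|=1$) go through exactly as you wrote, and you obtain the explicit constant $K_R=\pi/(2\ln(1+\sqrt{2}))$.
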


 \begin{thm}[Azuma–Hoeffding inequality]\label{thm:ineg_AZ-HO}
Let $(M_n)$ be a martingale with $M_0=0$. Assume that for all $1\leq k \leq n$,   $\vert \Delta M_k \vert \leq c_k$ a.s. for some constants $(c_k)$. Then for all $x\geq 0$
\begin{equation}\label{eq:ineg_AZ-HO}
\mathbb{P}\left( \vert M_n \vert \geq x \right) \leq 2 \exp \left( -\dfrac{x^2}{2\sum_{k=1}^n c_k^2} \right).
\end{equation}
\end{thm}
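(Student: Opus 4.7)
The plan is to use the classical Chernoff-style argument based on exponential moments, which is standard for martingales with bounded increments. First I would fix $\lambda>0$ and bound
\begin{equation*}
\mathbb{P}\left( M_n \geq x\right) \leq e^{-\lambda x} \mathbb{E}\left[ e^{\lambda M_n}\right]
\end{equation*}
by Markov's inequality applied to $e^{\lambda M_n}$. The key step is then to control $\mathbb{E}[e^{\lambda M_n}]$ by iterating conditional expectations: writing $M_n = M_{n-1} + \Delta M_n$ and using the tower property, one gets
\begin{equation*}
\mathbb{E}\left[ e^{\lambda M_n}\right] = \mathbb{E}\left[ e^{\lambda M_{n-1}} \mathbb{E}\left[ e^{\lambda \Delta M_n} \,\vert\, \mathcal{F}_{n-1}\right]\right].
\end{equation*}

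The main technical input is Hoeffding's lemma: if $X$ is a real random variable with $\mathbb{E}[X]=0$ and $|X|\leq c$ a.s., then $\mathbb{E}[e^{\lambda X}]\leq e^{\lambda^2 c^2/2}$. This is proved by convexity, writing $e^{\lambda x} \leq \tfrac{c-x}{2c} e^{-\lambda c}+ \tfrac{c+x}{2c} e^{\lambda c}$ for $x\in[-c,c]$, taking expectations, and estimating the resulting function of $\lambda c$ via its Taylor expansion. Since the martingale increments satisfy $\mathbb{E}[\Delta M_k \,\vert\, \mathcal{F}_{k-1}]=0$ and $|\Delta M_k|\leq c_k$ a.s., applying Hoeffding's lemma conditionally gives $\mathbb{E}[e^{\lambda \Delta M_k}\,\vert\, \mathcal{F}_{k-1}] \leq e^{\lambda^2 c_k^2/2}$, and an immediate induction yields
\begin{equation*}
\mathbb{E}\left[ e^{\lambda M_n}\right] \leq \exp\left(\frac{\lambda^2}{2}\sum_{k=1}^n c_k^2\right).
\end{equation*}

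Combining with the Markov estimate, $\mathbb{P}(M_n \geq x)\leq \exp\bigl(-\lambda x + \tfrac{\lambda^2}{2}\sum_k c_k^2\bigr)$, which I would then optimise over $\lambda>0$ by the choice $\lambda = x/\sum_{k=1}^n c_k^2$, producing the bound $\exp\bigl(-x^2/(2\sum_k c_k^2)\bigr)$. Applying the same argument to $-M_n$ (which is also a martingale with increments bounded by $c_k$) controls $\mathbb{P}(M_n\leq -x)$ by the same quantity, and a union bound yields \eqref{eq:ineg_AZ-HO}. There is no real obstacle here since the result is classical; the only point requiring care is the conditional application of Hoeffding's lemma, which relies on the a.s. bound $|\Delta M_k|\leq c_k$ together with $(\mathcal{F}_k)$-measurability of $M_{k-1}$ in order to factor out $e^{\lambda M_{k-1}}$ from the conditional expectation.
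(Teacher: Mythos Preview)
Your proof sketch is correct and is the standard argument for the Azuma--Hoeffding inequality. The paper itself does not supply a proof of this theorem; it simply states it in the appendix as a classical auxiliary result, so there is nothing to compare against.
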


\begin{thm}[Upper tail estimate for iid ensembles, Corollary 2.3.5 of \cite{tao2012}]\label{thm:tao2012_upper_tail} Suppose that $M=(m_{ij})_{1\leq i,j \leq n}$, where $n$ is a (large) integer and the $m_{ij}$ are independent centered random variables uniformly
bounded in magnitude by 1. Then there exist absolute constants $C, c > 0$ such that
$$\mathbb{P} \left( \Vert M \Vert_{op} > x\sqrt{n} \right) \leq C \exp \left( -cxn \right)$$
for any $x\geq C$.
\end{thm}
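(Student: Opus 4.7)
The plan is the classical two-step approach for operator norms of random matrices with independent bounded centered entries: combine Hoeffding's inequality applied in each fixed direction with an $\varepsilon$-net argument on the unit sphere, then take a union bound. The stated exponential tail in $x$ is in fact weaker than what this method naturally produces (Hoeffding delivers Gaussian tails), so there is little difficulty beyond tracking parameters.

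First I would use the variational formula $\Vert M \Vert_{op} = \sup_{u, v \in S^{n-1}} \langle u, Mv\rangle$. For fixed $u, v \in S^{n-1}$, write $\langle u, Mv \rangle = \sum_{i,j} m_{ij} u_i v_j$ as a sum of independent centered random variables. Each summand is bounded in absolute value by $\vert u_i v_j \vert$ (since $\vert m_{ij}\vert \le 1$) and $\sum_{i,j} (u_i v_j)^2 = \Vert u \Vert^2 \Vert v \Vert^2 = 1$. Hoeffding's inequality then gives
$$
\mathbb{P}\!\left(\vert\langle u, Mv\rangle\vert > t\right) \le 2 e^{-t^2/2}, \qquad t \ge 0.
$$

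Next I pick a $\tfrac{1}{4}$-net $\mathcal{N} \subset S^{n-1}$ of cardinality $\vert \mathcal{N} \vert \le 9^n$ (standard volumetric covering bound). The classical discretization lemma yields $\Vert M \Vert_{op} \le 2 \sup_{u, v \in \mathcal{N}} \vert\langle u, Mv \rangle\vert$, and a union bound over the at most $81^n$ pairs in $\mathcal{N}^2$ gives
$$
\mathbb{P}\!\left(\Vert M \Vert_{op} > 2t\right) \le 2 \cdot 81^n \exp(-t^2/2).
$$
Taking $t = x\sqrt{n}/2$, the right-hand side becomes $2 \exp\!\left(n(\log 81 - x^2/8)\right)$. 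For $x \ge C$ with $C$ a sufficiently large absolute constant, one has $x^2/8 - \log 81 \ge c x$ with some absolute $c > 0$, which yields the stated bound $\mathbb{P}(\Vert M \Vert_{op} > x\sqrt{n}) \le C' \exp(-c x n)$.

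The only subtle point is the net inequality $\Vert M \Vert_{op} \le (1-2\varepsilon)^{-1} \sup_{u,v \in \mathcal{N}} \vert\langle u, Mv\rangle\vert$: one approximates the suprema over $S^{n-1}$ by nearest net points and iterates the triangle inequality. There is no genuine obstacle here; the proof is entirely standard in random matrix theory, and the exponential rate in $x$ demanded by the statement leaves ample slack compared to the Gaussian rate $e^{-c x^2 n}$ that the argument actually provides.
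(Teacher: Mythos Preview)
Your argument is correct and is the standard $\varepsilon$-net plus Hoeffding approach. Note that the paper does not actually prove this statement: it is quoted as an auxiliary result from Tao's book (Corollary~2.3.5 of \cite{tao2012}) and stated without proof in the appendix. Your sketch is essentially the proof given in that reference, so there is nothing to compare against within the paper itself.
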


\begin{lem}
\label{prop:estimees_IC}
Under Hypothesis \ref{hyp:scenarios}, we have $\mathbb{P}$-almost surely if $N$ is large enough: 
\begin{equation}\label{eq:estimees_IC}
\sup_{1 \leq j \leq N}  \left( \sum_{i=1}^N \dfrac{\xi_{ij}^{(N)}}{N\rho_N}\right) \leq 2, \quad \sup_{1 \leq i \leq N}  \left( \sum_{j=1}^N \dfrac{\xi_{ij}^{(N)}}{N\rho_N}\right) \leq 2.
\end{equation}
\end{lem}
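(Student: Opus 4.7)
\textbf{Proof plan for Lemma \ref{prop:estimees_IC}.}

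By construction (Definition \ref{def:espace_proba_bb}), for each fixed $j$ the variables $(\xi_{ij}^{(N)})_{1\leq i\leq N}$ are independent Bernoulli random variables with $p_i:= \rho_N W(x_i,x_j)\in[0,\rho_N]$, hence
\[
\mathbb{E}\left[\sum_{i=1}^N \xi_{ij}^{(N)}\right]=\sum_{i=1}^N \rho_N W(x_i,x_j)\leq N\rho_N.
\]
The plan is therefore to apply Bernstein's inequality to the centered sum and then conclude by a union bound and Borel--Cantelli using the growth condition of Hypothesis \ref{hyp:scenarios}.

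More precisely, writing $S_j^{(N)}:=\sum_{i=1}^N (\xi_{ij}^{(N)}-p_i)$, Bernstein's inequality applied to independent $[0,1]$-valued variables yields, for every $t>0$,
\[
\mathbb{P}\bigl(S_j^{(N)}\geq t\bigr)\leq \exp\!\left(-\frac{t^{2}/2}{\sum_i p_i(1-p_i)+t/3}\right)\leq \exp\!\left(-\frac{t^{2}/2}{N\rho_N+t/3}\right).
\]
Choosing $t=N\rho_N$ gives $\mathbb{P}\bigl(\sum_i \xi_{ij}^{(N)}/(N\rho_N)>2\bigr)\leq \exp(-cN\rho_N)$ for an absolute constant $c>0$ (e.g.\ $c=3/8$). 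A union bound over $j\in\llbracket 1,N\rrbracket$ then yields
\[
\mathbb{P}\!\left(\sup_{1\leq j\leq N}\sum_{i=1}^N\frac{\xi_{ij}^{(N)}}{N\rho_N}>2\right)\leq N\exp(-cN\rho_N),
\]
and the symmetric argument (swapping the roles of $i$ and $j$) gives the same bound for the out-degree supremum.

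It remains to check that $N\exp(-cN\rho_N)$ is summable in $N$, so that Borel--Cantelli provides the announced $\mathbb{P}$-a.s.\ statement for $N$ large enough. Under Hypothesis \ref{hyp:scenarios}, either $N^{1-2\tau}\rho_N^{4}\to\infty$ or $N\rho_N^{2}\to\infty$; since $\rho_N\leq 1$, both imply $N\rho_N\gg \log N$ (in the first case $\rho_N\geq N^{-(1-2\tau)/4+o(1)}$ so $N\rho_N\geq N^{(3+2\tau)/4+o(1)}$, and in the second $N\rho_N\geq\sqrt{N}$). Hence the series $\sum_N N\exp(-cN\rho_N)$ converges, which concludes the proof. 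No serious obstacle is expected here; the only subtlety is recognising that the (seemingly overkill) dilution Hypothesis \ref{hyp:scenarios} is in particular strong enough to beat the $\log N$ factor coming from the union bound.
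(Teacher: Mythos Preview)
Your argument is correct. The paper's own proof simply invokes Corollary~8.2 of \cite{agathenerine2021multivariate} as a black box, whereas you supply a self-contained argument via Bernstein's inequality, a union bound over the $N$ rows/columns, and Borel--Cantelli. This is almost certainly the content of the cited corollary, so the two approaches are morally identical; yours has the advantage of being explicit and of making transparent exactly which dilution rate is needed (namely $N\rho_N/\log N\to\infty$, which is indeed implied by Hypothesis~\ref{hyp:scenarios} as you check). One cosmetic point: in the summability discussion you could streamline by observing directly that both conditions in Hypothesis~\ref{hyp:scenarios} force $N\rho_N$ to grow at least like a positive power of $N$, which immediately dominates $\log N$.
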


\begin{proof}
It is a direct consequence of Corollary 8.2 of a previous work \cite{agathenerine2021multivariate}, in the case $w_N=\rho_N$, $\kappa_N=\frac{1}{\rho_N}$, $W_N(x_i,x_j)=\rho_NW(x_i,x_j)$ with $W$ bounded.
\end{proof}

\begin{lem}\label{lem:maj_S}
Let $N\geq 1$, for $j\neq j'$ in $\llbracket 1, N \rrbracket$, let $\displaystyle S_{jj'}:= \dfrac{1}{N}\sum_{i=1}^N \overline{\xi_{ij}}~ \overline{\xi_{ij'}}$ with $\xi$ defined in Definition \ref{def:espace_proba_bb}, and $S_{N}^\text{max}:= \sup_{1\leq j\neq j'\leq N} \left\vert S_{jj'} \right\vert$. Then,
under Hypothesis \ref{hyp:scenarios}, $\mathbb{P}$-a.s.
\begin{equation}\label{eq:maj_SNMAX}
\limsup_{N\to\infty} S_{N}^\text{max}\leq N^{\tau-\frac{1}{2}}
\end{equation}
where $\tau\in(0,\frac{1}{2})$ comes from Hypothesis \ref{hyp:scenarios}.
\end{lem}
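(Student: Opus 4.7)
The plan is straightforward: for each fixed pair $j \neq j'$, the quantity $N S_{jj'} = \sum_{i=1}^N \overline{\xi_{ij}}\,\overline{\xi_{ij'}}$ is a sum of independent centered bounded random variables, and we can apply a standard concentration inequality followed by a union bound over the at most $N^2$ pairs.

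First I would verify the three properties needed to apply Azuma--Hoeffding (Theorem~\ref{thm:ineg_AZ-HO}) to the martingale $M_k := \sum_{i=1}^k \overline{\xi_{ij}}\,\overline{\xi_{ij'}}$ for fixed $j \neq j'$:
\begin{itemize}
\item \emph{Independence:} the family $(\xi^{(N)}_{ij})_{i,j}$ is mutually independent, so the products $(\overline{\xi_{ij}}\,\overline{\xi_{ij'}})_{1 \le i \le N}$ are independent.
\item \emph{Centering:} for $j\neq j'$, $\overline{\xi_{ij}}$ and $\overline{\xi_{ij'}}$ are independent and centered, so $\mathbb{E}[\overline{\xi_{ij}}\,\overline{\xi_{ij'}}] = 0$. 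In particular $(M_k)$ is a martingale with respect to the natural filtration.
\item \emph{Boundedness of increments:} since $\xi_{ij}^{(N)} \in \{0,1\}$ and $W_N(x_i,x_j) \in [0,1]$, one has $|\overline{\xi_{ij}}| \le 1$, hence $|\Delta M_k| = |\overline{\xi_{kj}}\,\overline{\xi_{kj'}}| \le 1$.
\end{itemize}

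Then Theorem~\ref{thm:ineg_AZ-HO} applied with $c_k = 1$ gives, for any $x > 0$,
\begin{equation*}
\mathbb{P}\bigl(|S_{jj'}| \ge x\bigr) \;=\; \mathbb{P}\bigl(|M_N| \ge Nx\bigr) \;\le\; 2\exp\!\left(-\frac{N^2 x^2}{2N}\right) \;=\; 2\exp\!\left(-\frac{N x^2}{2}\right).
\end{equation*}
Choosing $x = N^{\tau - 1/2}$, the exponent becomes $-\tfrac{1}{2} N^{2\tau}$, and a union bound over the $N(N-1) \le N^2$ ordered pairs $(j,j')$ with $j \neq j'$ yields
\begin{equation*}
\mathbb{P}\bigl(S_N^{\max} \ge N^{\tau - 1/2}\bigr) \;\le\; 2 N^2 \exp\!\left(-\tfrac{1}{2} N^{2\tau}\right).
\end{equation*}

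Since $\tau > 0$, the right-hand side is summable in $N$, and the Borel--Cantelli lemma gives $\mathbb{P}$-almost surely $S_N^{\max} < N^{\tau - 1/2}$ for all $N$ large enough, which is exactly \eqref{eq:maj_SNMAX}. I do not anticipate any serious obstacle: the only point of care is checking that $\overline{\xi_{ij}}$ and $\overline{\xi_{ij'}}$ are independent for $j \neq j'$ (which is immediate from Definition~\ref{def:espace_proba_bb}) so that the products are centered, ensuring a martingale structure. Note also that Hypothesis~\ref{hyp:scenarios} (the dilution assumption) plays no explicit role here beyond fixing the parameter $\tau \in (0, \tfrac{1}{2})$; the bound $|\overline{\xi_{ij}}| \le 1$ is uniform in the edge probabilities and does not require $\rho_N$ to be large.
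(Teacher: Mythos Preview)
Your proof is correct and follows essentially the same route as the paper: a concentration bound for each fixed pair $j\neq j'$, a union bound over the $O(N^2)$ pairs, and Borel--Cantelli. The only cosmetic difference is that the paper invokes Bernstein's inequality (yielding the exponent $-\tfrac14 N^{2\tau}$) whereas you use Azuma--Hoeffding (Theorem~\ref{thm:ineg_AZ-HO}) to obtain the slightly sharper $-\tfrac12 N^{2\tau}$; either suffices.
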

\begin{proof}
When $j$ and $j'$ are fixed and $j\neq j'$, $\left(X_i:= \overline{\xi_{ij}}~ \overline{\xi_{ij'}}\right)_{1\leq i \leq N}$ is a family of independent random variables with $\vert X_i\vert\leq 1$, $\mathbf{E}[X_i]=0$ and $\mathbf{E}[X_i^2]\leq 1$. Bernstein's inequality gives then for any $t>0$
$$\mathbf{P}\left( \left\vert \sum_{i=1}^N \overline{\xi_{ij}}~ \overline{\xi_{ij'}} \right\vert>t\right)\leq 2\exp\left( -\dfrac{1}{2} \dfrac{t^2}{N+\frac{t}{3}}\right)$$
hence for the choice $t=N^{\frac{1}{2}+\tau}$ with $\tau\in (0,\frac{1}{2})$,
$$\mathbf{P}\left( \left\vert \sum_{i=1}^N \overline{\xi_{ij}}~ \overline{\xi_{ij'}} \right\vert>N^{\frac{1}{2}+\tau} \right)\leq 2\exp\left( -\dfrac{1}{2} \dfrac{N^{2\tau}}{1+\frac{1}{3}N^{-\frac{1}{2}+\tau}}\right) \leq 2\exp\left( -\dfrac{1}{4}N^{2\tau}\right)$$
as $1+\frac{1}{3}N^{-\frac{1}{2}+\tau}\leq 2$. With an union bound
$$\mathbf{P}\left( \sup_{j\neq j'} \left\vert S_{jj'} \right\vert > \dfrac{1}{N^{\frac{1}{2}-\tau}}\right) \leq 2N^2 \exp\left( -\dfrac{1}{4} N^{2\tau}\right).$$
We apply then Borel Cantelli's lemma and obtain \eqref{eq:maj_SNMAX}.
\end{proof}

\begin{lem}\label{lem:inegalit_concentration_Y}
Fix $N > 1$ and $\left(Y_l\right)_{l=1,\ldots,n}$ real valued random variables defined on a probability space $\left(\Omega, \mathcal{F}, \mathbb{P}\right)$. Suppose that there exists $\nu>0$ such that, almost surely, for all $l = 1,\ldots, n-1$, $Y_l\leq 1$, $\mathbb{E}\left[Y_{l+1} \left| Y_l \right.\right] = 0$  and $\mathbb{E}\left[Y_{l+1}^2 \left|Y_l\right.\right]\leq \nu$. Then 
$$\mathbb{ P} \left(n^{ -1} (Y_{ 1}+ \ldots+ Y_{ n}) \geq x\right) \leq \exp \left( -n \frac{ x^{ 2}}{ 2\nu} B \left( \frac{ x}{\nu}\right)\right)$$ for all $x \geq 0$, where \begin{equation}\label{eq:def_B(u)}
B(u):= u^{-2}\left( \left( 1+u \right) \log \left( 1+u \right) - u \right).
\end{equation}
\end{lem}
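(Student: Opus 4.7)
The plan is to run the classical Chernoff/Cram\'er argument, adapted to the one-sided boundedness and the conditioning hypotheses of the statement. Writing $S_n := Y_1+\cdots+Y_n$, for any $\lambda>0$ Markov's exponential inequality yields
$$
\mathbb{P}(S_n \geq nx) \leq e^{-\lambda n x} \, \mathbb{E}\bigl[e^{\lambda S_n}\bigr],
$$
so the whole matter reduces to a sharp upper bound on the Laplace transform $\mathbb{E}[e^{\lambda S_n}]$, followed by an optimization in $\lambda>0$.

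The main technical ingredient will be a one-step Laplace estimate. Since the real-valued function $\varphi(y) := (e^y - 1 - y)/y^2$ is non-decreasing on $\mathbb{R}$, and since $\lambda Y_{l+1} \leq \lambda$ almost surely by the hypothesis $Y_{l+1}\leq 1$, I would first obtain the pointwise inequality
$$
e^{\lambda Y_{l+1}} \leq 1 + \lambda Y_{l+1} + (e^\lambda - 1 - \lambda)\, Y_{l+1}^2.
$$
Taking conditional expectation with respect to $Y_l$ and using $\mathbb{E}[Y_{l+1}\mid Y_l] = 0$ together with $\mathbb{E}[Y_{l+1}^2 \mid Y_l] \leq \nu$ then gives
$$
\mathbb{E}\bigl[e^{\lambda Y_{l+1}} \bigm| Y_l\bigr] \leq 1 + \nu(e^\lambda - 1 - \lambda) \leq \exp\bigl(\nu(e^\lambda - 1 - \lambda)\bigr).
$$
Iterating this bound through the tower property (interpreting the hypotheses in the natural filtration $\mathcal{F}_l := \sigma(Y_1, \ldots, Y_l)$, as is customary for such concentration statements) then produces
$$
\mathbb{E}\bigl[e^{\lambda S_n}\bigr] \leq \exp\bigl(n\nu(e^\lambda - 1 - \lambda)\bigr).
$$

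It remains to optimize in $\lambda>0$: the Chernoff bound becomes $\exp(-\lambda n x + n\nu(e^\lambda - 1 - \lambda))$, and the canonical choice $\lambda^\ast = \log(1 + x/\nu)$ yields the Bennett rate function
$$
\mathbb{P}(S_n \geq nx) \leq \exp\bigl(-n\nu\, h(x/\nu)\bigr), \qquad h(u) := (1+u)\log(1+u) - u.
$$
Since by definition $B(u) = h(u)/u^2$, this rewrites as $\exp(-n x^2 B(x/\nu)/\nu)$, which is stronger than the announced $\exp(-n x^2 B(x/\nu)/(2\nu))$ and thus implies it.

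The only delicate step is the one-step Laplace bound: the hypothesis provides only one-sided boundedness $Y_l \leq 1$, so a Hoeffding-type argument is not directly available, and one genuinely has to use the monotonicity of $\varphi$ in combination with the variance control $\mathbb{E}[Y_{l+1}^2\mid Y_l]\leq \nu$ in order to extract the Poisson-type rate function $h$. The remaining steps (iteration of conditional expectations, explicit minimization in $\lambda$, and the algebraic rewriting in terms of $B$) are routine.
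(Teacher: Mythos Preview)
Your argument is correct and in fact yields a bound sharper by a factor of $2$ in the exponent than the one stated. The paper's proof is shorter but less self-contained: it invokes Corollary~2.4.7 of Dembo--Zeitouni \cite{zeitouni1998large}, which gives the martingale Chernoff bound directly in the relative-entropy form $\exp\bigl(-nH\bigl(\tfrac{x+\nu}{1+\nu}\bigm|\tfrac{\nu}{1+\nu}\bigr)\bigr)$, and then applies an exercise from the same reference to lower-bound $H$ by $\tfrac{x^2}{2\nu}B(x/\nu)$. Your direct Bennett computation---via the monotonicity of $y\mapsto(e^y-1-y)/y^2$ and the explicit optimisation $\lambda^\ast=\log(1+x/\nu)$---lands immediately on the rate $\nu h(x/\nu)=\tfrac{x^2}{\nu}B(x/\nu)$ without the detour through $H$, which buys both self-containment and the better constant. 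Both proofs rely on the same caveat you correctly flag: the tower-property iteration needs the hypotheses to hold conditionally on the full past $\mathcal{F}_l=\sigma(Y_1,\ldots,Y_l)$, not just on $Y_l$ alone; the paper's citation of \cite{zeitouni1998large} makes the same implicit assumption, and the only place the lemma is used (Corollary~\ref{cor:ineg_concentration_xi_carre}, with independent summands) satisfies the stronger condition trivially.
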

\begin{proof}
A direct application of \cite[Corollary 2.4.7]{zeitouni1998large} gives that $$ \mathbb{ P}\left(n^{ -1} (Y_{ 1}+ \ldots+ Y_{ n}) \geq x\right) \leq \exp \left( -n H \left( \frac{ x+v}{ 1+v} \vert \frac{ v}{ 1+v}\right)\right),$$ where $H(p\vert q):= p \log(p/q) +(1-p) \log((1-p)/(1-q))$ for $p,q\in [0, 1]$. Then, the inequality $ H \left( \frac{ x+v}{ 1+v} \vert \frac{ v}{ 1+v}\right)\geq \frac{ x^{ 2}}{ 2v} B \left( \frac{ x}{ v}\right)$ (see \cite[Exercise 2.4.21]{zeitouni1998large}) gives the result.
\end{proof} 
\begin{cor}\label{cor:ineg_concentration_xi_carre} Let $\left(Z_{ij}\right)_{i,j}$ be a family of independent Bernoulli variables, with $\mathbb{E}[Z_{ij}]=m_{ij}$. Let $(\beta{ij})_{ij}$ be a sequence such that for any $i,j$, $ \beta_{i,j}\in (0,1]$.Then, for all $x\geq 0$
$$\mathbb{P}\left( \dfrac{1}{N^2} \sum_{i,j=1}^{N} \beta_{ij} \left( \left(Z_{ij}-m_{ij}\right)^2 - \mathbb{E}\left(Z_{ij}-m_{ij}\right)^2\right) \geq x\right) \leq \exp\left( -\dfrac{N^2x^2}{2}B(x)\right).$$
 \end{cor}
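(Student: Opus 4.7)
The proof plan is a direct application of Lemma~\ref{lem:inegalit_concentration_Y} after enumerating the pairs $(i,j) \in \llbracket 1, N\rrbracket^{2}$ into a sequence of length $n = N^{2}$. Order the pairs arbitrarily as $(i_{\ell}, j_{\ell})_{\ell=1,\ldots,N^{2}}$ and set
\begin{equation*}
Y_{\ell} := \beta_{i_{\ell} j_{\ell}}\bigl((Z_{i_{\ell} j_{\ell}}-m_{i_{\ell} j_{\ell}})^{2} - \mathbb{E}(Z_{i_{\ell} j_{\ell}}-m_{i_{\ell} j_{\ell}})^{2}\bigr),
\end{equation*}
so that $\frac{1}{N^{2}}\sum_{i,j}\beta_{ij}\bigl((Z_{ij}-m_{ij})^{2} - \mathbb{E}(Z_{ij}-m_{ij})^{2}\bigr) = \frac{1}{N^{2}}\sum_{\ell=1}^{N^{2}} Y_{\ell}$.

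The next step is to verify the three hypotheses of Lemma~\ref{lem:inegalit_concentration_Y} with $\nu = 1$. For the pointwise bound: since $Z_{ij} \in \{0,1\}$ and $m_{ij} \in [0,1]$, one has $(Z_{ij}-m_{ij})^{2} \in [0, \max(m_{ij}^{2}, (1-m_{ij})^{2})] \subseteq [0,1]$, hence combined with $\beta_{ij} \in (0,1]$ we get $Y_{\ell} \leq (Z_{i_{\ell} j_{\ell}} - m_{i_{\ell} j_{\ell}})^{2} \leq 1$ almost surely (only the one-sided bound $Y_{\ell} \leq 1$ is needed). For the centering: by independence of the family $(Z_{ij})_{i,j}$, the $Y_{\ell}$'s are themselves independent and centered, so $\mathbb{E}[Y_{\ell+1}|Y_{\ell}]=0$ is trivial. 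For the conditional second moment: using again $|Y_{\ell}| \leq 1$ we have $Y_{\ell+1}^{2} \leq 1$ a.s., whence $\mathbb{E}[Y_{\ell+1}^{2}|Y_{\ell}] \leq 1 =: \nu$.

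With these ingredients in place, Lemma~\ref{lem:inegalit_concentration_Y} applied with $n = N^{2}$ and $\nu = 1$ yields, for all $x \geq 0$,
\begin{equation*}
\mathbb{P}\!\left(\frac{1}{N^{2}}\sum_{\ell=1}^{N^{2}} Y_{\ell} \geq x\right) \leq \exp\!\left(-N^{2}\,\frac{x^{2}}{2}\, B(x)\right),
\end{equation*}
which is exactly the claimed inequality (noting that $B(x/\nu) = B(x)$ when $\nu = 1$).

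There is no real obstacle here: once Lemma~\ref{lem:inegalit_concentration_Y} is available, the corollary is a bookkeeping exercise. The only point requiring a moment's care is that the hypothesis of Lemma~\ref{lem:inegalit_concentration_Y} demands the \emph{one-sided} pointwise bound $Y_{\ell} \leq 1$, and that this must be checked pathwise rather than in expectation; this is immediate from $(Z_{ij}-m_{ij})^{2} \leq 1$ and $\beta_{ij} \leq 1$, with the negative excursion of $Y_{\ell}$ (which could a priori be as large as $\mathbb{E}(Z_{ij}-m_{ij})^{2} \leq 1/4$) playing no role in the upper-tail bound.
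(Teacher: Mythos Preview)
Your proof is correct and follows essentially the same approach as the paper: enumerate the pairs $(i,j)$ into a sequence of length $N^2$, define the centered variables, verify the three hypotheses of Lemma~\ref{lem:inegalit_concentration_Y} with $\nu=1$, and apply it directly. Your write-up is in fact slightly more detailed than the paper's in checking the pointwise bound $Y_\ell\leq 1$.
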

 
 \begin{proof} 
Fix a bijection $\phi_N:\llbracket 1 , N^2 \rrbracket \to \llbracket 1 , N \rrbracket \times \llbracket 1 , N \rrbracket$. For any $k\in \llbracket 1 , N^2 \rrbracket$ and $(i,j)=\phi_N(k)$, let $R_k=\beta_{ij} \left( \left(Z_{ij}-m_{ij}\right)^2 - \mathbb{E}\left(Z_{ij}-m_{ij}\right)^2\right)$. As the $\left(m_{ij}\right)_{i,j}$ are independent, the family of randon variables $\left(R_k\right)_{1\leq k \leq N^2}$ is also independent.
As $R_k\leq 1$ a.s., $\mathbb{E}\left[R_{k+1}\vert R_k\right]=0$ and $\mathbb{E}\left[R_{k+1}^2\vert R_k\right]\leq 1$, Lemma \ref{lem:inegalit_concentration_Y} implies that for any $x\geq 0$,
$$\mathbb{P}\left( \dfrac{1}{N^2} \sum_{k=1}^{N^2} R_k \geq x\right) \leq \exp\left( -\dfrac{N^2x^2}{2}B(x)\right)$$
where $B$ is defined in \eqref{eq:def_B(u)}. 
 \end{proof}
 
 \subsection{Other technical results}

\begin{lem}\label{lem:op_radius}Let $K$ be a kernel from $I^2 \to \mathbb{R}_+$ such that $\sup_{x\in I}\int_I K(x,y)^2dy <\infty$. Let $T_K:g\mapsto T_Kg:=\left(x\to\int_I K(x,y)dy\right)$ be the operator associated to $K$, that can be defined from $L^2(I)\to L^2(I)$ and from $L^\infty(I)\to L^\infty(I)$. We assume that $T_K^2:L^2(I)\to L^2(I)$ is compact. Then 
$$r_{ 2}(T_K)= r_{ \infty}(T_K).$$
\end{lem}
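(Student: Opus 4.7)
My plan is to prove the two inequalities $r_2(T_K) \leq r_\infty(T_K)$ and $r_\infty(T_K) \leq r_2(T_K)$ separately. The first one will exploit the compactness of $T_K^2$ on $L^2$ together with a regularity boost for the associated eigenfunction, while the second will follow from the Gelfand formula combined with an elementary interpolation estimate. A preliminary observation is that by Cauchy--Schwarz, the assumption $\sup_{x\in I} \int_I K(x,y)^2 dy < \infty$ implies both that $T_K$ is bounded from $L^2(I)$ to $L^\infty(I)$ (with $\Vert T_K g \Vert_\infty \leq C \Vert g\Vert_2$) and that $\sup_x \int_I K(x,y) dy<\infty$, so $T_K$ is bounded on $L^\infty(I)$. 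Since $I=[0,1]$ has finite measure, we also have the continuous embedding $L^\infty(I)\hookrightarrow L^2(I)$.

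For the inequality $r_2(T_K)\leq r_\infty(T_K)$, if $r_2(T_K)=0$ there is nothing to prove. Otherwise, since $T_K^2:L^2(I)\to L^2(I)$ is compact, its non-zero spectrum consists of eigenvalues of finite multiplicity accumulating only at $0$, and the spectral radius $r_2(T_K^2)=r_2(T_K)^2$ is attained: there exist $\lambda\in \mathbb{C}$ with $\vert \lambda \vert = r_2(T_K)^2$ and $\varphi\in L^2(I)\setminus\{0\}$ with $T_K^2 \varphi = \lambda \varphi$. The key step is to upgrade the regularity of $\varphi$: from $\varphi=\lambda^{-1}T_K(T_K\varphi)$, the preliminary observation gives $T_K\varphi\in L^\infty(I)\subset L^2(I)$, hence $T_K^2\varphi\in L^\infty(I)$, so $\varphi\in L^\infty(I)$. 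Since $T_K^2$ is bounded on $L^\infty(I)$ and the identity $T_K^2\varphi=\lambda\varphi$ holds there as well, $\lambda$ belongs to the spectrum of $T_K^2$ acting on $L^\infty(I)$, which yields $\vert\lambda\vert\leq r_\infty(T_K^2)=r_\infty(T_K)^2$ and therefore $r_2(T_K)\leq r_\infty(T_K)$.

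For the reverse inequality, I will use Gelfand's formula $r_\infty(T_K)=\lim_{n\to\infty}\Vert T_K^n\Vert_\infty^{1/n}$. For $g\in L^\infty(I)$ and $n\geq 2$, write $T_K^n g = T_K(T_K^{n-1}g)$. Since $T_K:L^2(I)\to L^\infty(I)$ is bounded by some constant $C$, we get $\Vert T_K^n g \Vert_\infty \leq C\Vert T_K^{n-1}g\Vert_2 \leq C \Vert T_K^{n-1}\Vert_{L^2\to L^2}\Vert g\Vert_2 \leq C \vert I\vert^{1/2}\Vert T_K^{n-1}\Vert_{L^2\to L^2}\Vert g\Vert_\infty$, so $\Vert T_K^n\Vert_\infty \leq C' \Vert T_K^{n-1}\Vert_2$. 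Taking $n$-th roots and passing to the limit,
\begin{equation*}
r_\infty(T_K)\leq \lim_{n\to\infty} (C')^{1/n}\bigl(\Vert T_K^{n-1}\Vert_2^{1/(n-1)}\bigr)^{(n-1)/n}=r_2(T_K),
\end{equation*}
which concludes the proof.

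The main obstacle in this plan is the regularity-boost step: one must verify that the eigenfunction of $T_K^2$ on $L^2$ actually belongs to $L^\infty$ so that it may serve as a witness for the spectrum on $L^\infty$. This relies on the combination of the $L^2\to L^\infty$ smoothing provided by $T_K$ (via Cauchy--Schwarz on the kernel) and the embedding $L^\infty\hookrightarrow L^2$ on a finite-measure interval. Everything else is standard functional analysis (Gelfand's formula, spectral theorem for compact operators, and the identity $r(T^n)=r(T)^n$).
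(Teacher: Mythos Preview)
Your proof is correct. The first inequality $r_2(T_K)\leq r_\infty(T_K)$ follows the same idea as the paper: take an eigenvalue of $T_K^2$ on $L^2$ realizing the spectral radius, and use the $L^2\to L^\infty$ smoothing of $T_K$ to show the eigenfunction lies in $L^\infty$, so that the same eigenvalue appears in the $L^\infty$-spectrum.

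For the reverse inequality $r_\infty(T_K)\leq r_2(T_K)$ you take a genuinely different route. The paper argues that $T_K^2$ is also compact on $L^\infty(I)$ and then matches nonzero eigenvalues in both directions, concluding that $\sigma_2(T_K^2)\setminus\{0\}=\sigma_\infty(T_K^2)\setminus\{0\}$ and hence equality of spectral radii. Your argument bypasses compactness on $L^\infty$ entirely: the factorisation $\Vert T_K^n\Vert_\infty\leq C'\Vert T_K^{n-1}\Vert_2$ combined with Gelfand's formula gives the inequality directly. This is more elementary and, incidentally, is robust to the exact compactness hypothesis: the paper's compactness argument on $L^\infty$ as written invokes compactness of $T_K$ (rather than $T_K^2$) on $L^2$, which is slightly stronger than what the lemma assumes, whereas your Gelfand argument only uses boundedness. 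The paper's approach, on the other hand, yields the finer statement that the nonzero spectra on $L^2$ and $L^\infty$ actually coincide, not merely that the spectral radii agree.
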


\begin{proof}
First note that for all $p\geq1$, $ r(T_K^{ p})^{ \frac{ 1}{ p}}= \left( \lim_{ n\to\infty} \left\Vert T_K^{ pn} \right\Vert^{ \frac{ 1}{ n}}\right)^{ \frac{ 1}{ p}}= \lim_{ n\to\infty} \left\Vert T_K^{ pn} \right\Vert^{ \frac{ 1}{ pn}}= r(T)$, so that $r(T_K^{ p})= r(T_K)^{ p}$. Hence $r_{ 2}(T_K^{ 2})= r_{ \infty}(T^{ 2})$ gives $r_{ 2}(T_{ K})= r_{ \infty}(T_{K})$. Let us prove that $r_{ 2}(T_K^{ 2})= r_{ \infty}(T_K^{ 2})$ by proving that they have the same spectrum. To do so, first note that $T_K^{ 2}: L^{ \infty}(I) \to L^{ \infty}(I)$ is compact: consider $\left(f_n\right)_n$ a bounded sequence of $L^\infty(I)$. It is then also bounded in $L^2(I)$, and as $T_K:L^2(I)\to L^2(I)$ is compact, there exists a subsequence $\left(f_{\phi(n)}\right)$ such that $T_Kf_{\phi(n)}$ converges in $L^2(I)$ to a certain $g$. Then for any $x\in I$,
$$ \vert T_K^2 f_{\phi(n)} - Tg \vert (x) \leq \int_I K(x,y) \left| T_Kf_{\phi(n)}(y) - g(y) \right| dy \leq C_K \Vert T_Kf_{\phi(n)}-g\Vert_2 \xrightarrow[n\to\infty]{} 0,$$ thus $T_K^2:L^\infty(I)\to L^\infty(I)$ is compact. Hence, if one denotes by $ \sigma_{ \infty}(T_{ K}^{ 2})$ and $ \sigma_{ 2}(T_{K}^{ 2})$ the corresponding spectrum of $T_{K}^{ 2}$ (in $L^{ \infty}(I)$ and $L^{ 2}(I)$ respectively), we have that each nonzero element of $ \sigma_{ \infty}(T_{K}^{ 2})$ and $ \sigma_{ 2}(T_{ K}^{ 2})$ is an eigenvalue of $T_{K}^{ 2}$: let $\mu \in \sigma_2(T_K^2)\setminus\{0\}$, there exists $g\in L^2(I)$ such that $\mu g = T_K^2g$. As $$\left| T_K^2g(x) \right| = \left| \int_I K(x,y) \int_I K(y,z) g(z) ~ \nu(dz)\nu(dy)\right| \leq C_K\Vert g \Vert_2 <\infty,$$ $g = \frac{1}{\mu}T_K^2g \in L^\infty(I)$ and $\mu \in \sigma_\infty(T_K^2)$. Conversely, let  $\mu \in \sigma_\infty(T_K^2)\setminus\{0\}$, there exists $g \in L^\infty(I)$ such that $\mu g = T_K^2g$. As $L^\infty(I)\subset L^2(I)$, $\mu \in \sigma_2(T_K^2)$. Hence $r_{ 2}(T_{K}^{ 2})= r_{ \infty}(T_{K}^{ 2})$ and \eqref{eq:spectral_radii_equal} follows.
\end{proof}

\begin{lem}[Quadratic Gr\"{o}nwall's lemma]\label{lem:gronwal_quadratic}
Let $f$ be a non-negative function piecewise continuous with finite  number of distinct jumps of size inferior to $\theta$ on $[t_0,T]$, let $g$ be a non-negative continuous function  and $h \in L_1$.. For any $t\in[t_0,T]$, assume $f$ satisfies
$$f(t)\leq  f(t_0)+g(t) + \int_{t_0}^t h(t-s) f(s)^2 ds.$$
Then, for $\delta<\dfrac{1}{9\Vert h\Vert_1}$, if $\theta\leq \dfrac{\delta}{2}$ and if $\sup_{t\in [t_0,T]}g(t) \leq \delta$, 
we have
$$\sup_{t\in [t_0,T]} f(t) \leq  f(t_0)+3\delta.$$
\end{lem}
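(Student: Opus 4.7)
The strategy is a bootstrap argument by contradiction. I would set $M := f(t_0) + 3\delta$, assume that some $t^{*} \in [t_0, T]$ satisfies $f(t^{*}) > M$, and introduce the first crossing time
$$
\tau := \inf\{ t \in [t_0, T] : f(t) > M\}.
$$
By construction, $f(s) \leq M$ for all $s \in [t_0, \tau)$, so that the left-limit satisfies $f(\tau^-) \leq M$. The goal is then to derive a contradiction by showing that the integral inequality at time $\tau$ forces $f(\tau)$ to remain strictly below $M$.

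The first main step is to re-apply the hypothesis at time $\tau$: since the single point $\{\tau\}$ is Lebesgue-negligible, $f(s)^2 \leq M^2$ almost everywhere on $[t_0, \tau]$, so that
$$
f(\tau) \leq f(t_0) + g(\tau) + \int_{t_0}^\tau h(\tau - s) f(s)^2 \, ds \leq f(t_0) + \delta + \|h\|_1 M^2.
$$
The second step is to use the assumption $\delta < \frac{1}{9\|h\|_1}$ (together with the fact that $f(t_0)$ is itself of order $\delta$ in the applications) to check that the right-hand side is strictly less than $M = f(t_0)+3\delta$, that is $\|h\|_1(f(t_0) + 3\delta)^2 < 2\delta$. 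When $f$ is continuous at $\tau$, this immediately contradicts $f(\tau) > M$. When $f$ has a jump at $\tau$, the hypothesis $\theta \leq \delta/2$ gives $f(\tau) \leq f(\tau^-) + \theta \leq M + \delta/2$, but the integral bound above still forces $f(\tau) < M$, again a contradiction. An alternative repackaging of the same argument, slightly cleaner, would be to set $\Phi(t) := \sup_{s\in[t_0,t]} f(s)$ and observe that $\Phi$ satisfies the quadratic inequality $\|h\|_1 \Phi(t)^2 - \Phi(t) + (f(t_0) + \delta) \geq 0$, whose solution set consists of two disjoint intervals $[0, x_-] \cup [x_+, \infty)$; the jump condition $\theta \leq \delta/2$ then prevents $\Phi$ from moving from the small branch $[0, x_-]$ to $[x_+, \infty)$, and an elementary expansion of the small root $x_-$ yields $x_- \leq f(t_0) + 3\delta$.

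\textbf{Main obstacle.} The delicate point is to couple the jump-size bound $\theta \leq \delta/2$ with the quadratic feedback $\|h\|_1 f^2$, so that at the first crossing time $\tau$ the integral inequality still closes in spite of the discontinuities of $f$. Without the jump control, $f(\tau)$ could overshoot $M$ by an arbitrary amount, and the bootstrap would collapse; without the quantitative condition $\delta \|h\|_1 < 1/9$, the quadratic term $\|h\|_1 M^2$ would exceed the $2\delta$ of linear slack available, and the feedback would re-amplify the overshoot. The proof therefore hinges on the joint calibration of these two hypotheses, which is precisely the content of the lemma.
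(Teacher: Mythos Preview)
Your first-crossing-time bootstrap is exactly the paper's argument: set $t^*=\inf\{t:f(t)>f(t_0)+3\delta\}$, use $f\le f(t_0)+3\delta$ on $[t_0,t^*)$ inside the integral inequality, and derive a contradiction at $t^*$ by splitting into the continuous and jump cases just as you do. You are also right that the closing step $\|h\|_1(f(t_0)+3\delta)^2<2\delta$ requires $f(t_0)$ itself to be of order $\delta$ --- the paper's proof silently writes $9\delta^2$ in place of $(f(t_0)+3\delta)^2$, effectively taking $f(t_0)=0$, so the lemma as stated is indeed missing that smallness hypothesis (which does hold in the application, where $f(t_0)\le \varepsilon/2$ and $\delta\le\varepsilon/6$).
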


\begin{proof}
Let $A=\{t\in [t_0,T], f(t)>f(t_0)+3\delta\}$, suppose $A\neq \emptyset$. Let $t^*=\inf\{t\in [t_0,T], f(t)>f(t_0)+3\delta\}$. If there is no jump at $t_0$, by the initial conditions  $t^*>t_0$, and if there is a jump,  $f(t_0^+)\leq f(t_0)+\dfrac{\delta}{2}$ hence we also have $t^*>t_0$. Moreover, for all $t\in [t_0,t^{*-}]$, $f(t)\leq f(t_0)+ \delta+9\delta^2 \int_{t_0}^t h(t-s)ds\leq f(t_0)+2\delta$. If there is a jump at $t^*$, it is of amplitude $\theta\leq\dfrac{\delta}{2}$ hence $f(t^*)\leq f(t_0)+ \dfrac{5\delta}{2}<f(t_0)+3\delta$ which is a contradiction. If there is no jump at $t^*$, by local continuity we have $f(t^*)\leq  f(t_0)+ \delta+9\delta^2 \int_{t_0}^{t^*} h(t-s)ds\leq f(t_0)+2\delta$ which is also a contradiction. We conclude then that $\sup_{t\in [t_0,T]} f(t) \leq  f(t_0)+3\delta$.

\end{proof}

\bibliographystyle{abbrv}

\end{document}